\title{Coboundaries and eigenvalues of morphic subshifts}
\author{Paul MERCAT\\Aix-Marseille University, CNRS, I2M, Marseille, France}
\newtheorem{thm}{Theorem}[section]
\newtheorem*{thm*}{Theorem}
\newtheorem{cor}[thm]{Corollary}
\newtheorem{lem}[thm]{Lemma}
\newtheorem{define}[thm]{Definition}
\newtheorem*{ex*}{Example}
\newtheorem{ex}[thm]{Example}
\newtheorem{prop}[thm]{Proposition}
\newtheorem{rem}[thm]{Remark}
\newcommand\vide[1]{}
\def\restriction#1#2{\mathchoice
              {\setbox1\hbox{${\displaystyle #1}_{\scriptstyle #2}$}
              \restrictionaux{#1}{#2}}
              {\setbox1\hbox{${\textstyle #1}_{\scriptstyle #2}$}
              \restrictionaux{#1}{#2}}
              {\setbox1\hbox{${\scriptstyle #1}_{\scriptscriptstyle #2}$}
              \restrictionaux{#1}{#2}}
              {\setbox1\hbox{${\scriptscriptstyle #1}_{\scriptscriptstyle #2}$}
              \restrictionaux{#1}{#2}}}
\def\restrictionaux#1#2{{#1\,\smash{\vrule height .8\ht1 depth .85\dp1}}_{\,#2}}
\def\A{\mathcal{A}}
\def\B{\mathcal{B}}
\def\L{\mathcal{L}}
\def\TT{\mathbb{T}}
\def\F{\mathcal{F}}
\def\N{\mathbb{N}}
\def\Z{\mathbb{Z}}
\def\Q{\mathbb{Q}}
\def\R{\mathbb{R}}
\def\K{\mathbb{K}}
\def\RR{\mathcal{R}}
\def\C{\mathbb{C}}
\def\CCC{\mathcal{C}}
\def\1{\mathbbm{1}}
\def\id{\operatorname{id}}
\def\S{\mathcal{S}}
\def\NN{\mathcal{N}}
\def\M{\mathcal{M}}
\def\E{\mathcal{E}}
\def\ab{\operatorname{ab}}
\def\Span{\operatorname{span}}
\def\gal{\operatorname{Gal}}
\newcommand\abs[1]{\left| #1 \right|}
\newcommand\defi[1]{{\bf #1}}
\newcommand\norm[1]{\left\lVert #1 \right\rVert}
\def\cod{\operatorname{cod}}
\begin{document}

\maketitle

\begin{abstract}
    We define a morphic subshift as a subshift generated by the image of a substitution subshift by another substitution.
    In other words, it is the subshift associated with a ultimately periodic directive sequence.
    We present an efficient algorithm for computing eigenvalues of morphic subshifts using coboundaries.
    We show that continuous eigenvalues of $\S$-adic subshifts for primitive directive sequences are always associated with some coboundary.
    For morphic subshifts, we provide a characterization of the dimension of the $\Q$-vector space generated by eigenvalues.
    Additionally, if the substitutions are unimodular and without coboundaries,
    we give a necessary and sufficient condition for weak mixing of the subshift.
\end{abstract}

\tableofcontents

\section{Introduction}

Eigenvalues of topological or measure-preserving dynamical systems are important invariants.
For example a rational (additive) eigenvalue indicates that the system is partitioned with a cyclic dynamic on pieces,
while an irrational eigenvalue implies a rotation of the circle as a factor.
Additionally, a minimal system is weakly mixing if and only if the set of its additive eigenvalues is $\Z$.

Coboundaries can naturally occur.
Indeed, it arose when we where studying a problem of Novikov with Pascal Hubert and Olga Paris-Romaskevich.
We found a renormalization of our dynamical system that allows us to conjugate it with a $\S$-adic subshift
with a coboundary (see Example~\ref{ex:CET4}).

In 1986, a seminal paper by Host (see~\cite{Host86}) characterized eigenvalues of a substitution subshift
with coboundaries and showed the surprising result that every eigenvalue is associated with a continuous eigenfunction.

Ten years later, Ferenczi-Mauduit-Nogueira (see~\cite{FMN96}) provided a completely different algebraic characterization of eigenvalues of substitution subshifts,
allowing for explicit computation. 
However their characterization is complicated and they even made a calculation error
in one of the two examples they provided (see Example~9.1 in~\cite{Me23} for more details).
More recently, I proposed a simpler algorithm to compute these eigenvalues (see~\cite{Me23}),
based on Host's characterization and using a algorithm for proprification due to Durand and Petite (see~\cite{DP20}).
However, this algorithm only works for pseudo-unimodular substitutions, and the proprification algorithm
can sometimes produce very large matrices, that can complicate the computation of eigenvalues.

Much work has been done to extend Host's result, for example, for linearly recurrent subshifts (see~\cite{CDHM2003}),
or for minimal Cantor systems (see~\cite{BDM05} and~\cite{DFM2019}), or particular examples (see~\cite{CFM08}).
But they always assume that substitutions are left-proper, which avoid coboundaries.
In~\cite{BBY22}, Berthé-Bernales-Yassawi propose a generalization of Host's result using coboundaries,
but they make very strong assumptions 
and their definition of coboundary is complex and not very general.

I start in Section~\ref{sec:defs} by giving notations and definitions used in the article.
Then, in Section~\ref{sec:gen:crit}, I give a general criterion ensuring the existence of eigenvalues for a subshift.
In Section~\ref{sec:Sadic}, I partially extend Host's result and show that any continuous eigenvalue of a primitive directive sequence is associated with some coboundary.
I use it in Section~\ref{sec:morphic} to propose a new algorithm to compute eigenvalues of morphic subshifts that works without proprification and for more general subshifts.
I give an explicit description of the set of eigenvalues, and algorithms to compute it and to check aperiodicity and recognizability.
In Section~\ref{sec:wm}, I give a characterization of the dimension of the $\Q$-vector space spanned by eigenvalues of a morphic subshift.
If moreover the iterated substitution is unimodular without coboundary,
I provide a necessary and sufficient condition for the subshift to be weakly mixing, by looking at the action of some Galois group on pairs of algebraic numbers.
I finish by providing examples and a way to construct examples with non-trivial coboundaries in Section~\ref{sec:exs}, with some examples coming from geometrical problems.

\section{Definitions and notations} \label{sec:defs}

\subsection{Words and worms}

An \defi{alphabet} is defined as a finite set.
The set of finite words over an alphabet $\A$ is denoted $\A^*$.
It forms a monoid under concatenation.
We use $\abs{w}$ to represent the length of a word $w$, and $\abs{w}_a$ to indicate the number of occurrences of the letter $a$ in $w$.
We denote $w_{[i, j)}$ the subword of $w$ between indices $i$ (inclusive) and $j$ (exclusive).
The \defi{abelian vector} of a word $w \in \A^*$ is defined as $\ab(w) = (\abs{w}_a)_{a \in \A} \in \Z^A$.
We denote $e_a = \ab(a)$, so that $(e_a)_{a \in \A}$ forms the canonical basis of $\R^\A$.

The \defi{worm} of an infinite word $x \in \A^\N$ is
$W(x) = \{ \ab(x_{[0,n)}) \mid n \in \N \} = \{\ab(p) \mid p \text{ prefix of } x \}$.

\subsection{Subshift}

Given an alphabet $\A$, we equip the sets of infinite words $\A^\N$ and bi-infinite words $\A^\Z$ with the product toplogy.
A \defi{subshift} is a compact subset $X$ of $\A^\N$ or $\A^\Z$ endowed with the \defi{shift map} $S : \A^N \to \A^\N$ or $S : \A^\Z \to \A^\Z$. We refer to the subshift as \defi{infinite} or \defi{bi-infinite}.
A subshift $(X,S)$ is \defi{minimal} if every orbit is dense in $X$.
The \defi{language} of a subshift $(X,S)$ is the set $\L_X$ of finite factors of elements of $X$.

An (additive) \defi{eigenvalue} of a subshift $(X,S)$ is a real number $t \in \R$ such that there exists a function $f: X \to \R/\Z \in L^2(\mu)$, where $\mu$ is an $S$-invariant measure,
such that $f \circ S = f + t$. If $f$ is continuous, we refer to $t$ as a \emph{continuous eigenvalue}.

Let $(X,S)$ be a bi-infinite subshift. Then, we have an associated infinite subshift $(\tilde X, S)$
where elements of $\tilde X$ are the right-infinite part of elements of $X$.
We have the following (see Proposition~2.1 in~\cite{BBY22}).

\begin{prop} \label{prop:inf:bi-inf}
    Let $(X,S)$ be a minimal subshift.
    Then, $(X,S)$ and $(\tilde X,S)$ have the same continuous eigenvalues.
\end{prop}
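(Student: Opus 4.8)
The plan is to show that the two subshifts have the same continuous eigenvalues by exhibiting a natural relationship between eigenfunctions on $X$ and eigenfunctions on $\tilde X$. The key observation is that the projection $\pi : X \to \tilde X$ sending a bi-infinite word to its right-infinite part is a continuous, shift-commuting, surjective factor map. Consequently, any continuous eigenfunction $f$ on $\tilde X$ with eigenvalue $t$ pulls back to the continuous eigenfunction $f \circ \pi$ on $X$ with the same eigenvalue; this direction is immediate and gives the easy inclusion that every continuous eigenvalue of $(\tilde X, S)$ is a continuous eigenvalue of $(X, S)$.

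\medskip

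The substantive direction is the converse: given a continuous eigenfunction $g : X \to \R/\Z$ with $g \circ S = g + t$, I must produce a continuous eigenfunction on $\tilde X$. The obstruction is that $g$ a priori depends on the left-infinite part of a point, while functions on $\tilde X$ may only depend on the right-infinite part. So the heart of the argument is to prove that $g$ in fact factors through $\pi$, i.e. that $g(x) = g(x')$ whenever $x$ and $x'$ have the same right-infinite tail. First I would fix two points $x, x' \in X$ with $\pi(x) = \pi(x')$, meaning they agree on all nonnegative coordinates but possibly differ to the left. The strategy is to exploit minimality together with the continuity of $g$: by minimality the forward orbit of any point is dense, and continuity of $g$ means $g$ depends (up to arbitrarily small error) only on a finite central window of coordinates.

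\medskip

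The main technical step, which I expect to be the crux, is the following recurrence-and-matching argument. Because $(X,S)$ is minimal, for any $N$ the common right-tail word $w = x_{[0,N)} = x'_{[0,N)}$ reappears infinitely often in any point of $X$; in particular, using minimality and compactness I can find a large power $S^k$ and approximate points so that a long central block of $x$ is carried by the dynamics onto a block agreeing with $x'$, allowing me to compare $g(x)$ and $g(x')$ through the cocycle relation $g \circ S^k = g + kt$. Concretely, I would look for times $m, m'$ such that $S^m x$ and $S^{m'} x'$ are close in $\A^\Z$ (they share a long central window because their shared right-tail can be shifted into the center), so that continuity forces $g(S^m x)$ and $g(S^{m'} x')$ to be close, and then I would transport this back via $g(S^m x) = g(x) + mt$ and $g(S^{m'} x') = g(x') + m't$ while controlling $(m - m')t$. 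Arranging $m = m'$ (or $m \equiv m' \bmod$ the relevant period, using that we can synchronize the shifts by choosing occurrences of the same long factor) yields $g(x) = g(x')$. Thus $g$ descends to a well-defined function $\tilde g$ on $\tilde X$; continuity of $\tilde g$ follows since $\pi$ is a continuous open surjection between compact metric spaces and $g = \tilde g \circ \pi$, and the eigenvalue equation passes down because $\pi$ commutes with $S$. This establishes the reverse inclusion and completes the proof.
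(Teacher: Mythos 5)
The paper offers no proof of this proposition to compare against: it is imported verbatim from Berthé--Bernales--Yassawi (Proposition~2.1 in~\cite{BBY22}), so your argument must be judged on its own terms. It is correct in substance, but the crux is much simpler than you make it, and your framing obscures this. Your easy direction is fine: $\pi : X \to \tilde X$, $\pi(x) = x_{[0,\infty)}$, is a continuous shift-commuting surjection, so continuous eigenfunctions of $(\tilde X,S)$ pull back. For the converse, the well-definedness claim $\pi(x)=\pi(x') \Rightarrow g(x)=g(x')$ needs neither minimality, nor recurrence, nor any matching of occurrences of long factors: if $x$ and $x'$ agree on all coordinates $\geq 0$, then for every $n$ the points $S^n x$ and $S^n x'$ agree on all coordinates $\geq -n$, so $d(S^n x, S^n x') \to 0$; by uniform continuity of $g$ (automatic on the compact set $X$) this forces $g(S^n x) - g(S^n x') \to 0$ in $\R/\Z$, while the relation $g \circ S = g + t$ gives $g(S^n x) - g(S^n x') = g(x) - g(x')$ \emph{exactly}, for every $n$; hence $g(x) = g(x')$. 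This is precisely your ``$m = m'$'' case, and since nothing ever prevents you from taking $m = m'$, the entire detour through minimality, density of forward orbits, approximate points, and synchronization modulo a period is dead weight (indeed minimality is not needed for either direction with the paper's additive definition of continuous eigenvalue). One genuine flaw to repair: your justification that $\tilde g$ is continuous because ``$\pi$ is a continuous open surjection'' is unwarranted --- $\pi$ need not be open, since left extensions of a right-infinite word need not be unique --- but openness is also unnecessary: $\pi$ is a continuous surjection from a compact metric space onto a Hausdorff space, hence a closed quotient map, and continuity of $\tilde g$ follows from $g = \tilde g \circ \pi$ (or by extracting convergent subsequences of preimages). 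With these simplifications and that one repair, your proposal is a complete and correct proof.
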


A subshift $(X,S)$ is \defi{linearly recurrent} if there exists a constant $K$ such that
for every $x \in X$ and for every $k \in \N$, there exists $0 < n < Kk$ such that $x_{[0,k)} = x_{[n,n+k)}$.

\subsection{Return words and coboundaries} \label{ss:cob}

A \defi{return word} of a subshift $X$ is a word $w$ such that $w w_0 \in \L_X$ and $\abs{w}_{w_0} = 1$, where $w_0$ denotes the first letter of $w$.
We refer to the subspace $R(X)$ (or just $R$) of $\R^\A$, generated by $\ab(w)$ for return words $w$, as the \defi{return space}.
The \defi{return space on letter $a$} is denoted $R_a(X)$ and is the subspace of $R(X)$ generated by return words on letter $a$.
A \defi{coboundary} of a subshift $X$ over an alphabet $\A$ is a morphism $c : \A^* \to \R/\Z$
such that $c(w) = 0$ for every return word $w$.
Note that the data provided by such a morphism is equivalent to that of a map $\tilde c : \R^\A \to \R/\Z$
such that $\tilde c \circ \ab = c$. Throughout the following, we use the same notation to represent both maps.
A \defi{real coboundary} of a subshift $X$ over an alphabet $\A$ is a linear form $c : \R^\A \to \R$ such that
$c(\ab w) = 0$ for every return word $w$.
As for coboundaries, we can see a real coboundary as a map $\A^* \to \R$.

\begin{lem} \label{lem:rep}
    Let $(X,S)$ be a minimal subshift.
    Any coboundary $c : \A^* \to \R/\Z$ has a representative $\tilde c : \R^\A \to \R$ which is a real coboundary.
\end{lem}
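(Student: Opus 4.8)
The plan is to recast everything as finite-dimensional linear algebra over the return space and then isolate the single arithmetic point where minimality is genuinely needed. Writing $t_a := \tilde c(e_a) \in \R$, a real coboundary representative of $c$ is exactly a choice of reals $t_a$ with $t_a \equiv c(a) \pmod{\Z}$ for every letter $a$ and $\sum_{a} \abs{w}_a\, t_a = 0$ for every return word $w$; equivalently, an $\R$-linear form $\tilde c$ on $\R^\A$ that reduces to $c$ modulo $\Z$ on $\Z^\A$ and vanishes on the whole return space $R = \Span_\R\{\ab(w) : w \text{ return word}\}$ (a linear form killing a generating set kills the span). Let $L \subseteq \Z^\A$ be the subgroup generated by the return vectors $\ab(w)$, so $R = \Span_\R L$, and set $N := \Z^\A \cap R$.

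First I would dispose of the easy direction: if $c$ vanishes on $N$, the representative exists. Indeed $\Z^\A / N$ is torsion-free (if $kv \in N$ then $v \in R$ since $R$ is a subspace, and $v \in \Z^\A$, so $v \in N$), hence free abelian, and the images $\bar e_a$ of the basis vectors in $\R^\A/R$ generate a subgroup $\Lambda$ that is free of rank $\dim(\R^\A/R)$ and spans $\R^\A/R$; thus $\Lambda$ is a genuine lattice, any $\Z$-basis of it being an $\R$-basis of $\R^\A/R$. Since $c$ kills $N$, it descends to a homomorphism $\Lambda \to \R/\Z$; choosing real lifts of its values on such a $\Z$-basis and extending $\R$-linearly produces a form $\R^\A/R \to \R$ whose pullback to $\R^\A$ is the desired $\tilde c$.

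The heart of the matter — and the step I expect to be the main obstacle — is therefore to prove that every coboundary $c$ vanishes on $N$, equivalently (by Pontryagin duality, since $\R/\Z$ detects all torsion) that $L$ is saturated in $\Z^\A$, i.e. $N = L$. A priori $N/L$ is a finite group on which a coboundary could take nonzero torsion values, and the relations ``$c=0$ on return vectors'' alone do not forbid this, so minimality must intervene. My approach would be to reduce to return words to a single fixed letter $a$ and exploit that every such word $w$ satisfies $\abs{w}_a = 1$: this pins the $a$-coordinate and reduces saturation of $L$ to saturation of the difference group generated by the $\ab(w) - \ab(w')$ inside the coordinate hyperplane $\{v_a = 0\}$. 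To promote a rational relation expressing an element $v \in N$ as a combination of return vectors into an integral one, I would use minimality and recurrence: concatenations of return words to $a$ realize, as honest abelian vectors of factors of points of $X$, a cofinal family of points of $L$, and a telescoping (``worm'') argument along a well-chosen point should let me represent an integer point of $R$ by an actual word, whence $c(v)=0$ directly. Making this promotion rigorous — eliminating the torsion, equivalently proving the saturation combinatorially — is where the real work lies; an alternative would be to represent $c$ by a continuous $\R/\Z$-valued function on the Cantor set $X$ and lift it to an $\R$-valued continuous function, which is possible because $X$ is totally disconnected, thereby removing the torsion in a single topological stroke.
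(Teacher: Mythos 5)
Your reduction is correct and even sharp: a real representative exists if and only if $c$ vanishes on $N = \Z^\A \cap R$, and, quantifying over all coboundaries (by divisibility of $\R/\Z$), the lemma is indeed equivalent to the saturation $N = L$. The lattice-lifting argument for the easy direction is fine. But the point you explicitly leave open is the entire content of the lemma, and neither of your two sketches closes it. The paper's mechanism is a single combinatorial observation that replaces your hoped-for ``promotion'': by minimality, any two occurrences of a letter $b$ inside words of $\L_X$ can be placed in one long word of $\L_X$, and the factor between two consecutive occurrences of any letter splits, by cutting at each occurrence of that letter, into return words to that letter; hence $c(p) = c(q)$ in $\R/\Z$ for any two prefixes $p, q$ preceding an occurrence of $b$. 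This means each letter occurs at a single well-defined vertex of a finite graph with vertex set $\{c(p)\} \subset \R/\Z$ and one edge per letter; choosing a real lift for each vertex and setting $\tilde c(a) = e - s$ along the edge $s \xrightarrow{a} e$ makes the sum of $\tilde c$ along any return word telescope to exactly $0$ in $\R$, because a return word traces a closed loop. Note that this argument never mentions $N$ or saturation: it produces $\tilde c \in R^\circ$ directly, and saturation follows a posteriori by your own duality remark.

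Concretely, here is why your sketches fail. (i) An integer point of $R$ need not be the abelian vector of any word: it can have negative coordinates, and even a nonnegative one need not be realized by a factor. Worse, the problematic points are exactly those of $N \setminus L$, which by definition are \emph{not} integer combinations of return vectors, so no concatenation of return words, however cofinal in $L$, can reach them; the telescoping has to happen at the level of the values $c(p)$ on prefixes, as above, not at the level of abelian vectors. (ii) The topological route is circular and incomplete: the continuous transfer function $f : X \to \R/\Z$ with $f - f \circ S = c(x_0)$ is precisely what the paper constructs \emph{from} the graph (its existence encodes the same well-definedness you are trying to prove), and even granting $f$ together with a continuous lift $\tilde f : X \to \R$ (which does exist since $X$ is compact and zero-dimensional), the real cocycle $\tilde f - \tilde f \circ S$ is only \emph{locally} constant on each cylinder $[a]$; cylinders are Cantor sets, not connected, so this function may jump by integers within $[a]$, and you do not obtain a well-defined value $\tilde c(a)$, i.e.\ no morphism $\A^* \to \R$, without a further argument that again amounts to the graph construction.
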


\begin{proof}
    Given a coboundary $c$, we construct a directed graph with edges labeled by $\A$ and vertices in $\R/\Z$,
    as follows:
    Start from vertex $0$ and choose some letter $a \in \L_X$.
    Add an edge $0 \xrightarrow{a} c(a)$.
    Choose a letter $b \in \A$ such that $ab \in \L_X$, then add the edge $c(a) \xrightarrow{b} c(ab)$.
    Choose a letter $c \in \A$ such that $abc \in \L_X$, then add the edge $c(ab) \xrightarrow{c} c(abc)$. 
    Continue this process until every letter has been considered.
    
    Note that each letter appears in the graph only once, and every word of $\L_X$ starting by letter $a$ corresponds to a path from $0$ in the automaton.
    Next, for each vertex of the graph, choose a representative in $\R$.
    We define $\tilde c (a) := e-s$ for each edge $s \xrightarrow{a} e$. 
    Then $\tilde c$ defines a real coboundary and $c = \tilde{c}$ mod $\Z$.
\end{proof}

We call \defi{coboundary space} of a subshift $(X,S)$ the vector space of real coboundaries.
In other words, it is $R(X)^\circ$,
where $E^\circ = \{ w \in E^* \mid \forall x \in E,\ wx = 0 \}$ denotes the annihilator of $E$.
Note that it is completely determined by a graph that we call \defi{coboundaries graph} with $d+1$ vertices where $d$ is the dimension of the coboundary space.

\begin{ex}
    Consider the periodic shift generated by the periodic word $(ab)^\omega$.
    Then, $(1/2, 1/2)$ is a coboundary.
    The corresponding graph is
    \begin{center}
        \begin{tikzpicture}
            \node at (0,0) (0) {$0$};
            \node at (3cm,0) (1) {$1/2$};
            
            \draw[->] (0) to [bend left] node[below]{a} (1);
            \draw[->] (1) to [bend left] node[above]{b} (0);
        \end{tikzpicture}
    \end{center}
    Next, we obtain the linear form $(1/2, -1/2) : \R^2 \to \R$ that annihilates return words $\{ab, ba\}$.
    The coboundary space is the one-dimensional subspace generated by this linear form, and the coboundaries graph is this graph.
\end{ex}

Coboundaries naturally extend to infinite words:

\begin{lem}
    Let $(X,S)$ be a minimal subshift, and let $c : \A^* \to \R/\Z$ be a coboundary.
    Then we can extend $c : X \cup \A^* \to \R/\Z$ such that $c(x_0) = c(x) - c(Sx)$
    for every $x \in X \cup \A^+$.
\end{lem}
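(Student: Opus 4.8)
\emph{Plan.} For a finite word $x\in\A^+$ the claimed identity is automatic: since $c$ is a morphism, $c(x)=c(x_0)+c(Sx)$, which rearranges to $c(x_0)=c(x)-c(Sx)$ (the degenerate case $|x|=1$ uses $c(Sx)=c(\epsilon)=0$). So the real content is to define a value $c(x)\in\R/\Z$ for each infinite $x\in X$ satisfying the cocycle relation $c(x)-c(Sx)=c(x_0)$. The plan is to exhibit a solution that depends only on the first letter of $x$: I will find a function $v:\A\to\R/\Z$ on letters with $v(b)-v(a)=c(a)$ whenever $ab\in\L_X$, and then set $c(x):=-v(x_0)$.

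The first step is to construct $v$ from the coboundary structure, for which I would use Lemma~\ref{lem:rep} and its coboundaries graph. That graph attaches to each letter $\ell$ a single edge with source $s_\ell$ and target $e_\ell$ such that $e_\ell-s_\ell\equiv c(\ell)$, and, crucially, $e_a=s_b$ for every factor $ab\in\L_X$ (this edge-matching is exactly what makes the vertex values consistent). Setting $v(\ell):=s_\ell$ then gives $v(b)-v(a)=s_b-s_a=e_a-s_a=c(a)$ for $ab\in\L_X$. Equivalently, one can build $v$ directly: the order-$1$ Rauzy graph (vertices the letters, edges the $2$-factors) is strongly connected by minimality, so one fixes a root letter $a_0$, sets $v(a_0)=0$, and propagates $v(b)=v(a)+c(a)$ along edges. \textbf{I expect the only genuine obstacle here to be the well-definedness of $v$}, i.e.\ consistency around cycles of this graph: for a closed walk $a_0\to\cdots\to a_{m-1}\to a_0$ one must verify $\sum_{i}c(a_i)\equiv 0\pmod\Z$. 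This is precisely where the coboundary hypothesis (vanishing on return words) is needed — using $e_{a_i}=s_{a_{i+1}}$ the sum telescopes to $s_{a_0}-s_{a_0}=0$, which amounts to saying that such closed walks decompose into return words, on which $c$ vanishes.

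With $v$ in hand the final step is routine verification. Defining $c(x):=-v(x_0)$ for $x\in X$, the first letter of $Sx$ is $x_1$, so $c(x)-c(Sx)=-v(x_0)+v(x_1)=c(x_0)$ because $x_0x_1\in\L_X$, which is the desired relation. For a consistency check against the iterated form, telescoping $v(x_{i+1})-v(x_i)=c(x_i)$ gives $c(x_{[0,n)})=v(x_n)-v(x_0)$, whence $c(x_{[0,n)})+c(S^nx)=\bigl(v(x_n)-v(x_0)\bigr)-v(x_n)=-v(x_0)=c(x)$ for every $n$, so the extension is compatible with all shifts. As a bonus worth recording, since the extended $c$ depends only on $x_0$ it is constant on each clopen cylinder $\{x\in X\mid x_0=\ell\}$ and is therefore continuous on $X$, which is convenient for the eigenfunction arguments that follow.
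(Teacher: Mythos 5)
Your proof is correct and takes essentially the same route as the paper: the paper's own proof likewise invokes the coboundaries graph of Lemma~\ref{lem:rep} and defines $c(x) = -v$, where $v$ is the source vertex of the unique edge labeled $x_0$, which is exactly your $c(x) = -v(x_0)$, with the same one-line verification via $x_0x_1 \in \L_X$. One small caution: your parenthetical alternative of propagating $v$ along the order-$1$ Rauzy graph is shakier than you suggest, since a closed walk in that graph need not be a factor of the language and hence need not decompose into return words, but your main argument does not depend on it.
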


\begin{proof}
    As in the proof of previous lemma, consider a coboundary graph associated with the coboundary $c$.
    We define $c$ on $X$ by $c(x) = -v$ where $v \xrightarrow{x_0} \cdot$ is the unique arrow
    labeled by letter $x_0$ in the graph. Then we have $c(x_0) = c(x) - c(S x)$.
\end{proof}

Note that it also works with real coboundaries.
The extension of $c$ to $X$ is unique if we set the value $c(x)$ at a point $x \in X$.

\subsection{Matrices} \label{ss:matrix}

A matrix $M \in M_{d}(\N)$ is
\begin{itemize}
    \item \defi{primitive} if there exists $n \geq 1$ such that $M^n$ has strictly positive coefficients,
    \item \defi{irreducible} if its characteristic polynomial is irreducible,
    \item \defi{unimodular} if its determinant is $\pm 1$,
    \item \defi{pseudo-unimodular} if the product of its non-zero eigenvalues is $\pm 1$, counting multiplicities.
    \item \defi{Pisot} if it has one eigenvalue $>1$ and every other eigenvalue has a modulus strictly less than $1$.
\end{itemize}

We denote $M > 0$ if every coefficient of $M$ is strictly positive.
We define the \defi{pseudo-inverse} of a square matrix $M$ as the matrix $M^{-1}$ obtained by inversion
of every invertible Jordan block and taking the transpose of the other ones in the Jordan decomposition of the matrix.
When the matrix $M$ is invertible, this is the usual inverse of the matrix.
In restriction to the eventual image $I = \bigcap_{n \in \N} M^n \R^A$, the matrix $\restriction{M}{I}$ is invertible
and its inverse is the pseudo-inverse $\restriction{M^{-1}}{I}$.

A number $\beta \in \R$ is \defi{Perron} if it is an algebraic integer $\beta > 1$ such that every Galois conjugate has modulus strictly less than $\beta$. It is \defi{Pisot} if moreover Galois conjugates have modulus strictly less than $1$.

We have the following well-known theorem.
\begin{thm}(Perron-Frobenius)
    If $M \in M_d(\N)$ is a primitive matrix, then its greatest eigenvalue is simple and is a Perron number.
    Moreover, it has an associated eigenvector with strictly positive coordinates.
\end{thm}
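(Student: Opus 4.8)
The plan is to establish the three assertions separately — existence of a strictly positive eigenvector, simplicity of the dominant eigenvalue, and strict domination of its modulus over all other eigenvalues — and then read off the Perron property from the fact that $M$ has integer entries.

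First I would produce a nonnegative eigenvector by a fixed-point argument. Let $n$ be such that $M^n > 0$ and consider the standard simplex $\Delta = \{ x \in \R^d \mid x_i \geq 0,\ \sum_i x_i = 1 \}$. Since $M$ has no zero column, the map $x \mapsto Mx / \norm{Mx}$ (with $\norm{\cdot}$ the $\ell^1$ norm) is a well-defined continuous self-map of the compact convex set $\Delta$, so by Brouwer's fixed point theorem it has a fixed point $v$, giving $Mv = \lambda v$ with $v \geq 0$, $v \neq 0$ and $\lambda = \norm{Mv} > 0$. Applying $M^n > 0$ to $v$ yields $\lambda^n v = M^n v > 0$, hence $v > 0$: this is the desired strictly positive eigenvector. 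Running the same argument for the transpose $M^T$ (also primitive) produces a strictly positive left eigenvector $u$ with $uM = \lambda' u$, and pairing $uMv$ two ways forces $\lambda' = \lambda$.

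Next I would show that $\lambda$ dominates every other eigenvalue. If $Mw = \mu w$ with $w \in \C^d \setminus \{0\}$, then $\abs{\mu}^n \abs{w} \leq M^n \abs{w}$ coordinatewise (triangle inequality, $M^n \geq 0$); pairing with $u > 0$ gives $\abs{\mu}^n (u \cdot \abs{w}) \leq \lambda^n (u \cdot \abs{w})$, so $\abs{\mu} \leq \lambda$ and $\lambda$ is the spectral radius. For the strict inequality, suppose $\abs{\mu} = \lambda$. Then equality propagates through the pairing, forcing $M^n \abs{w} = \lambda^n \abs{w}$, so $\abs{w}$ is a nonnegative eigenvector of the positive matrix $M^n$ and must be a positive multiple of $v$; in particular $\abs{w} > 0$. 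Equality in the triangle inequality $\abs{\sum_j (M^n)_{ij} w_j} = \sum_j (M^n)_{ij} \abs{w_j}$ with all matrix entries strictly positive then forces all coordinates of $w$ to share a common phase, whence $w$ is a scalar multiple of $v$ and $\mu = \lambda$. This is the step where primitivity, rather than mere irreducibility, is essential, and I expect it to be the main obstacle to write cleanly. Simplicity of $\lambda$ then follows: geometric simplicity is the uniqueness of the positive eigendirection just used, and algebraic simplicity holds because a generalized eigenvector $w$ solving $(M - \lambda I)w = v$ would give $u \cdot v = u(M - \lambda I)w = (uM - \lambda u)w = 0$, contradicting $u \cdot v > 0$.

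Finally, $\lambda$ is a root of the monic characteristic polynomial $\chi_M \in \Z[x]$, hence an algebraic integer, and its Galois conjugates are among the roots of $\chi_M$, that is, among the eigenvalues of $M$; by the strict domination just proved, every conjugate distinct from $\lambda$ has modulus strictly smaller than $\lambda$. It remains to check $\lambda > 1$: since each row of a primitive matrix is nonzero, the minimal row sum is at least $1$, and the elementary bound $\min_i \sum_j M_{ij} \leq \lambda \leq \max_i \sum_j M_{ij}$ gives $\lambda \geq 1$, with equality impossible for $d \geq 2$ because $\lambda = 1$ would force every row sum to equal $1$, making $M$ the incidence matrix of a map on $\{1, \dots, d\}$, whose powers keep a single nonzero entry per row and can never be strictly positive. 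Thus $\lambda$ is a Perron number.
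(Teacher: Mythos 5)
The paper offers no proof to compare against: it quotes Perron--Frobenius as a well-known classical theorem and moves on, so your proposal can only be judged on its own merits. On those merits it is correct, and it is the standard textbook route: Brouwer's fixed point theorem on the simplex for a nonnegative eigenvector (your observation that primitivity rules out zero columns, so $x \mapsto Mx/\norm{Mx}_1$ is well defined, is right), positivity of $M^n$ to upgrade to $v > 0$, a positive left eigenvector $u$ of the (also primitive) transpose to identify $\lambda$ as the spectral radius, the equality case of the complex triangle inequality for strict domination --- correctly flagged as the step where primitivity rather than irreducibility is essential --- the pairing $u(M - \lambda I)w = 0$ against $u \cdot v > 0$ for algebraic simplicity, and integrality of the characteristic polynomial for the Perron property.

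Two caveats are worth recording. First, in the strict-domination step you assert that a nonnegative eigenvector of the positive matrix $M^n$ ``must be a positive multiple of $v$''; this uniqueness is exactly what you later invoke for geometric simplicity, so as written it is used before it is proved. It is supplied in three lines (given $w' \geq 0$ with $M^n w' = \lambda^n w'$, set $t = \min_i w'_i/v_i$; then $w' - tv \geq 0$ has a vanishing coordinate, while $M^n(w'-tv) = \lambda^n(w'-tv)$ would be strictly positive unless $w' = tv$), and in fact the domination step itself needs only $\abs{w} > 0$, which follows directly from $M^n\abs{w} = \lambda^n\abs{w}$ and $M^n > 0$, so the logical order can be cleaned by deferring the uniqueness claim to the simplicity paragraph. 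Second, your final step quietly reveals a genuine edge case in the statement itself: for $d = 1$ the matrix $M = (1)$ is primitive with dominant eigenvalue $1$, which is not Perron under the paper's definition (which requires $\beta > 1$). Your argument that $\lambda = 1$ is impossible is valid precisely for $d \geq 2$ (and the $\lambda = 1 \Rightarrow$ all row sums equal $1$ claim deserves its one-line justification: from $uM = u$ applied to the all-ones vector, $\sum_i u_i(r_i - 1) = 0$ with $u > 0$ and each row sum $r_i \geq 1$), so your restriction is the honest reading of a statement that, as printed, silently assumes this degenerate case away --- harmlessly so for the substitutions considered in the paper.
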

We call this greatest eigenvalue the \defi{Perron eigenvalue} and an associated eigenvector is a \defi{Perron eigenvector}.

For sequences of invertible matrices, we have the following.

\begin{thm}[Osseledet's Theorem]
    Let $\mu$ be an ergodic invariant measure on the full shift $(\M^\N, S)$, where $\M \subset GL_d(\R)$ is finite.
    For almost every sequence $(M_n)_{n \in \N} \in \M^\N$,
    there exists $0 < r \leq d$, $\theta_1 > ... > \theta_r$ and subspaces $\R^d = E_1 \supsetneqq ... \supsetneqq E_{r-1} \supsetneqq E_r = \{0\}$ such that
    \[
        v \in E_i \backslash E_{i+1} \Longleftrightarrow \lim_{n \to \infty} \frac{1}{n} \log \norm{v M_{[0,n)}} = \theta_i.
    \]
\end{thm}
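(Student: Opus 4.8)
The plan is to deduce the statement from Kingman's subadditive ergodic theorem, applied both to the norm of the matrix cocycle and to the norms of its exterior powers, and then to build the flag from the growth-rate function of individual vectors. Writing $A_n(x) = M_{[0,n)}$ for $x = (M_k)_{k \in \N} \in \M^\N$, I would first record the integrability that makes everything work: since $\M \subset GL_d(\R)$ is finite, both $\log\norm{M}$ and $\log\norm{M^{-1}}$ are bounded uniformly over $M \in \M$. Submultiplicativity of the operator norm, in the row-vector convention, gives $A_{n+m}(x) = A_n(x)\, A_m(S^n x)$ and hence $\log\norm{A_{n+m}(x)} \le \log\norm{A_n(x)} + \log\norm{A_m(S^n x)}$, so $f_n(x) = \log\norm{A_n(x)}$ is a subadditive cocycle with $f_1 \in L^\infty(\mu) \subset L^1(\mu)$. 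Kingman's theorem together with ergodicity of $\mu$ then yields a constant $\theta_1$ with $\frac1n\log\norm{A_n(x)} \to \theta_1$ for $\mu$-almost every $x$.

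Next I would extract all the Lyapunov exponents, with multiplicity, by passing to exterior powers. For $1 \le p \le d$ let $\Lambda^p A_n(x)$ denote the induced map on $\Lambda^p \R^d$; its operator norm equals the product $\sigma_1 \cdots \sigma_p$ of the $p$ largest singular values of $A_n(x)$, and $\log\norm{\Lambda^p A_n(x)}$ is again a subadditive cocycle. Kingman provides almost everywhere limits $\gamma_p$, and with $\gamma_0 = 0$ the numbers $\lambda_p := \gamma_p - \gamma_{p-1}$ are nonincreasing, since $\gamma_p$ is concave in $p$ (the $\log \sigma_i$ being nonincreasing). The distinct values among $\lambda_1 \ge \cdots \ge \lambda_d$, listed in decreasing order, are the exponents $\theta_1 > \cdots > \theta_r$, each $\theta_i$ occurring with a multiplicity $d_i$ summing to $d$.

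Finally I would construct the filtration. For almost every $x$ and every $v \ne 0$ set $\chi(x,v) = \limsup_n \frac1n\log\norm{v A_n(x)}$. Submultiplicativity forces $\chi(x,v) \le \theta_1$, and $\chi$ satisfies $\chi(x,cv) = \chi(x,v)$ for $c \ne 0$ together with $\chi(x, v+w) \le \max(\chi(x,v), \chi(x,w))$; hence $E_i(x) := \{v : \chi(x,v) \le \theta_i\}$ is a linear subspace, and the $E_i$ form a decreasing chain with $E_1 = \R^d$ running down to $\{0\}$. The biconditional $v \in E_i \setminus E_{i+1} \Leftrightarrow \chi(x,v) = \theta_i$ is then immediate from the definition. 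It remains to show that each inclusion is strict with $\dim E_i - \dim E_{i+1} = d_i$ and that the $\limsup$ is in fact a limit; for this I would compare the growth of $\norm{v A_n(x)}$ over $p$-dimensional subspaces with the exterior-power exponents $\gamma_p$ via the singular value decomposition $A_n(x) = U_n D_n V_n$, reading off the codimensions from how many singular values exceed $e^{n\theta_i}$.

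The main obstacle is exactly this last step: converting the almost everywhere convergence of the norms (and of the exterior-power norms) into pointwise control of the growth rate of every individual vector, with the correct dimension count for the flag. The formal part --- Kingman plus the ultrametric-type inequality for $\chi$ --- is routine; the real content is showing that the singular directions of $A_n(x)$ stabilise at the Lyapunov rates, so that the $\limsup$ defining $\chi$ is a genuine limit and each shell $E_i \setminus E_{i+1}$ has dimension $d_i$. The summability/telescoping estimate underlying this convergence of directions is the technical heart of the multiplicative ergodic theorem, and would be the one part I expect to require genuine work rather than bookkeeping.
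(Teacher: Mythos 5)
The paper does not prove this statement at all: it is quoted as the classical multiplicative ergodic theorem of Oseledets, used as background for the Lyapunov-exponent discussion, so there is no internal proof to compare against. Your outline follows the standard textbook route (Kingman's subadditive ergodic theorem applied to $\log\norm{M_{[0,n)}}$ and to its exterior powers $\Lambda^p$, then a flag built from the growth function $\chi(x,v)$), and everything you actually carry out is correct: integrability is immediate from finiteness of $\M \subset GL_d(\R)$, ergodicity makes the Kingman limits $\gamma_p$ constant, concavity of $p \mapsto \gamma_p$ orders the candidate exponents $\lambda_p = \gamma_p - \gamma_{p-1}$, and the ultrametric inequality $\chi(x,v+w) \leq \max(\chi(x,v),\chi(x,w))$ makes the sets $E_i(x)$ subspaces forming a decreasing filtration satisfying the stated biconditional with $\limsup$ in place of $\lim$.

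However, as you acknowledge yourself, the proposal stops exactly where the theorem begins. The claim that the $\limsup$ is a genuine limit and that $\dim E_i - \dim E_{i+1} = d_i$ is not a final step that falls out of the singular value decomposition by bookkeeping: it is the entire analytic content of Oseledets' theorem. Kingman controls the norms $\norm{\Lambda^p A_n(x)}$, i.e.\ the singular values, but says nothing about the singular \emph{directions}; without controlling those, one cannot rule out that the subspace of vectors growing at rate at most $\theta_i$ rotates with $n$, so that individual vectors oscillate between rates, $\chi$ is a strict $\limsup$, and the shells carry the wrong dimensions. The standard repair is Raghunathan's argument: show that the symmetric matrices $\bigl(A_n(x)A_n(x)^{\top}\bigr)^{1/2n}$ converge almost everywhere to a limit $\Lambda(x)$, by proving that the angles between corresponding singular subspaces at times $n$ and $n+1$ decay geometrically (at a rate governed by the spectral gaps $\theta_i - \theta_{i+1}$, using subexponential control of $\norm{A_1(S^n x)}$ along the orbit) and are therefore summable; the eigenspaces of $\Lambda(x)$ then produce the flag, the existence of the limit, and the multiplicities simultaneously. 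So the approach is the right one and the sketch is honest, but with this core estimate left unproved, what you have is an outline of the standard proof rather than a proof.
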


We call $\theta_i$ a \defi{Lyapunov exponent} of $(M_n)$ and $E_i$ the corresponding \defi{Lyapunov space}.

\vide
{
    \begin{define}
        We say that a sequence of matrices $(M_n)$ has the \defi{fast convergence property} if we have
        \[
            v M_{[0,n)} \xrightarrow{n \to \infty} 0 \Longrightarrow \sum_{n \in \N} \norm{v M_{[0,n)}} < \infty.
        \]
    \end{define}
    
    Note that if $(M_n)$ is hyperbolique (i.e. $0$ is not a Lyapunov exponent) then it has the fast convergence property.
    If $(M_n)$ has a Lyapunov exponent $\theta_i = 0$ such that there exists a subspace $F$ with $E_i = F + E_{i+1}$
    and $\exists C > 0,\ \forall n \in \N,\ M_n(F) \subset F$ and $C < \norm{\restriction{M_{[0,n)}}{F}} < 1/C$, then $(M_n)$ also has the fast convergence property.
    By Theorem~\ref{thm:allcont} every eigenvalue of $(X_{\bm \sigma}, S)$ is continuous.
}

\subsection{Substitution}

Let $\A$ and $\B$ be two alphabets.
A \defi{word morphism} is a morphism $\sigma: \A^* \to \B^*$ for the concatenation of words.
It is \defi{non erasing} if $\forall a \in \A,\ \abs{\sigma(a)} \geq 1$.
A \defi{substitution} is a non-erasing word morphism.
We call $\A$ the \defi{domain alphabet} and $\B$ the \defi{codomain alphabet} of $\sigma: \A^* \to \B^*$.
If both alphabets of a substitution $\sigma$ are the same, we denote it $\A_\sigma$.

The \defi{matrix} of a substitution $\sigma : \A^* \to \B^*$ is the unique matrix $M_\sigma$ (or $\ab(\sigma)$) satisfying $\ab(\sigma(w)) = M_\sigma \ab(w)$ for every word $w \in \A^*$.

For example, the matrix of the Tribonnacci substitution $a \mapsto ab,\ b \mapsto ac,\ c \mapsto a$ is
$\begin{pmatrix} 1 & 1 & 1 \\ 1 & 0 & 0 \\ 0 & 1 & 0 \end{pmatrix} $.

We say that a substitution is primitive, irreducible, Pisot, pseudo-unimodular, etc...
if its matrix has the corresponding property and if domain and codomain alphabets are the same.

The \defi{language of a substitution} $\sigma$ is the set $\L_\sigma$ of factors of $\sigma^n(a)$, for $a \in \A$ and $n \in \N$, where $\A$ is the domain alphabet of $\sigma$.
The infinite (resp. bi-infinite) \defi{subshift of a substitution} $\sigma$ is the set of infinite (resp. bi-infinite) words
whose factors are in $\L_\sigma$.
Note that the language of a primitive substitution is the language of its subshift.

A \defi{substitution subshift} is the subshift of a primitive substitution.
A \defi{morphic subshift} is the subshift generated by the image by a substitution of a substitution subshift.


Let $X$ be a subshift over an alphabet $\A$,
and let $\sigma: \A^* \to \B^*$ be a morphism.
Let $Y$ be the shift generated by $\sigma(X)$.
Every element of $Y$ can be written in the form $S^k \sigma(x)$, with $x \in X$ and $0 \leq k < \abs{\sigma(x_0)}$.
We say that $\sigma$ is \defi{recognizable} in $X$ if such a representation is unique for every element of $Y$.
We have the following (see~\cite{Mosse1996} Theorem~1).

\begin{thm}[Mossé] \label{thm:mosse}
    Let $\sigma$ be a primitive aperiodic substitution.
    Then $\sigma$ is recognizable in the bi-infinite subshift $X_\sigma$.
\end{thm}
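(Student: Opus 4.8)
This is Mossé's recognizability theorem, stated here as a known result, so what follows is a sketch of how I would reconstruct a proof rather than a claim of originality. Throughout, for $y \in X_\sigma$ I call a \emph{desubstitution} a pair $(x,k)$ with $x \in X_\sigma$, $0 \leq k < \abs{\sigma(x_0)}$ and $y = S^k \sigma(x)$; such a pair partitions $\Z$ into consecutive blocks, each of the form $\sigma(a)$, whose left endpoints form a \emph{cutting-point set} $E(y) \subset \Z$ decorated by the corresponding letter sequence. Recognizability is exactly the uniqueness of $(x,k)$, equivalently of the decorated cutting-point set. The plan is first to recast this global uniqueness as a uniform \emph{local} statement: there exists a radius $L$ such that, for all $y,y' \in X_\sigma$, if $y_{[-L,L]} = y'_{[-L,L]}$ and $0$ is a cutting point of $y$ reading the letter $a$, then $0$ is a cutting point of $y'$ reading the same $a$. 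The equivalence between global uniqueness and the existence of such a uniform $L$ is the standard compactness step: the set of pairs (point, desubstitution) is closed in $X_\sigma \times (X_\sigma \times \Z)$, so failure of a uniform radius can be promoted to a genuine ambiguity on a limit point.

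Concretely, I would argue by contradiction: assuming no uniform $L$ exists, for each $n$ I obtain points agreeing on $[-n,n]$ whose desubstitutions disagree at the origin. Extracting convergent subsequences (compactness of $X_\sigma$), I produce a single bi-infinite word $z \in X_\sigma$ carrying two \emph{distinct} decorated cutting-point sets $E \neq E'$, i.e.\ two legitimate tilings of $\Z$ by substituted blocks. Since the theorem concerns the bi-infinite subshift, this limiting word must be read two-sidedly, so the ambiguity genuinely uses a window on both sides of the origin; this is the bilateral form of recognizability. The goal then reduces to a purely combinatorial assertion: two distinct tilings of one word by blocks $\{\sigma(a)\}_{a \in \A}$ cannot coexist unless the word is periodic.

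The main obstacle is precisely this last assertion, which is the heart of Mossé's argument. To carry it out I would track the displacement between nearby cutting points of $E$ and of $E'$ as one reads along $z$. Because there are only finitely many block types $\sigma(a)$ and $z$ is recurrent (by primitivity and minimality of $X_\sigma$), a pigeonhole argument on the finitely many local configurations — block type in $E$, block type in $E'$, and their relative offset — forces some configuration to recur with the \emph{same} nonzero offset; matching $z$ against its offset copy then exhibits a nonzero period. Making the control of offsets rigorous is where the real work concentrates: I would use that block lengths $\abs{\sigma^m(a)}$ grow like $\lambda^m$, with $\lambda$ the Perron eigenvalue (Perron–Frobenius), together with the fact that an aperiodic primitive substitution subshift contains no arbitrarily long powers. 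This bounded-repetition property is exactly what forbids two offset tilings from propagating consistently across all of $\Z$, and it is the step that uses aperiodicity essentially. A nonzero period of $z$ contradicts aperiodicity, since a primitive aperiodic substitution has an infinite, hence non-periodic, minimal subshift. If convenient I would first reduce to a substitution admitting a fixed point by passing to a power $\sigma^n$ (which generates the same subshift), but the compactness formulation above already handles an arbitrary $y$, so I expect everything outside the offset-accounting and the bounded-power input to be routine bookkeeping.
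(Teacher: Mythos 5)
The paper offers no proof to compare against: Theorem~\ref{thm:mosse} is quoted as known, with a citation to Mossé (\cite{Mosse1996}, Theorem~1). So your sketch can only be measured against the standard proof. Your compactness reduction is sound and is indeed the usual first step: the set of pairs (point, desubstitution) is closed, offsets are bounded by $\max_a \abs{\sigma(a)}$, so failure of a uniform window radius $L$ passes to a limit word $z$ carrying two distinct decorated cutting-point sets. You also correctly identify where aperiodicity must enter (bounded powers, via linear recurrence of primitive substitution subshifts).

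The genuine gap is in the combinatorial heart, exactly where you locate ``the real work.'' Your proposed mechanism --- pigeonhole on the finitely many local configurations (block type in $E$, block type in $E'$, relative offset), so that some configuration recurs with the same nonzero offset, whence ``matching $z$ against its offset copy then exhibits a nonzero period'' --- is a non sequitur. If the same configuration occurs at positions $p$ and $q$, you learn only that $z$ has identical local pictures there; you do not get $z_{p+t}=z_{q+t}$ for all $t$, nor agreement of $z$ with $S^d z$ beyond a bounded window, so no period is produced. Moreover, the ``purely combinatorial assertion'' you reduce to --- two distinct tilings of one word by the blocks $\{\sigma(a)\}$ force periodicity --- is not a single-scale statement at all: with the decorations required to lie in $X_\sigma$ it is precisely recognizability restated, i.e., the thing to be proved. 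Mossé's argument is essentially multi-scale: one desubstitutes both decompositions repeatedly, obtaining for every $m$ two $\sigma^m$-decompositions of $z$ whose cutting points stay within a bounded nonzero offset of each other while the block lengths $\abs{\sigma^m(a)}$ grow like $\lambda^m$; this produces agreement of $z$ with a fixed nonzero shift $S^d z$ on windows of length tending to infinity (equivalently, arbitrarily high powers of a word of bounded length in $\L_\sigma$), and only then do bounded powers and compactness yield a shift-periodic point of $X_\sigma$, contradicting aperiodicity through minimality. You name the right ingredients ($\lambda^m$ growth, bounded powers), but the iterated-desubstitution bookkeeping that keeps the two cutting sets distinct and at controlled offset across all scales is the key idea of the proof, and the one-scale pigeonhole you substitute for it would fail as stated.
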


Note that a primitive substitution is aperiodic if its Perron eigenvalue is not an integer by Remark~\ref{rem:per}.


The \defi{prefix-suffix automaton} of a substitution $\sigma: \A^* \to \A^*$ is a labeled directed graph
with vertices the alphabet $\A$ and with edges $a \xrightarrow{p,s} b$
if and only if $\sigma(a) = pbs$.
The \defi{prefix automaton} is the same automaton where labels $p,s$ are replaced by $p$.
We denote $a_{n+1} \xrightarrow{p_n} a_n ... \xrightarrow{p_0} a_0$ if $a_{k+1} \xrightarrow{p_k} a_k$ for every $0 \leq k \leq n$.

Every element $x$ of the bi-infinite subshift $X_\sigma$ has a \defi{prefix-suffix decomposition}
\[
    ...\sigma^3(p_3)\sigma^2(p_2)\sigma(p_1)p_0 \cdot a_0 s_0 \sigma(s_1) \sigma^2(s_2) \sigma^3(s_3)...
\]
This decomposition is not always equal to $x$ since it may not define a bi-infinite word.
If $\sigma$ is recognizable, such a decomposition is unique.
Thus, it permits to define maps $p_n : X_\sigma \to \A^*$.

The following lemma gives a property of real coboundaries of a substitution subshift.

\begin{lem} \label{lem:cob:unity}
    Let $\sigma$ be a primitive substitution.
    The coboundary space $R(X_\sigma)^\circ$ is included in the sum of generalized left-eigenspaces of $M_\sigma$
    for eigenvalues roots of unity and zero.
\end{lem}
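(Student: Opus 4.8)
The plan is to combine three ingredients: that $M_\sigma$ carries return words to concatenations of return words, that a real coboundary stays uniformly bounded along the whole forward orbit of $M_\sigma$, and finally Kronecker's theorem together with the rationality of the return space. First I would show $M_\sigma(R(X_\sigma))\subseteq R(X_\sigma)$. If $w=w_0u$ is a return word to $w_0=a$, then looking at the occurrences of $b$, the first letter of $\sigma(a)$, inside $\sigma(wa)=\sigma(w)\sigma(a)$ shows that $\sigma(w)$ is a concatenation of return words to $b$, so $M_\sigma\ab(w)=\ab(\sigma(w))\in R$. Consequently $R:=R(X_\sigma)$ is an $M_\sigma$-invariant subspace, and since it is spanned by the integer vectors $\ab(w)$ it is defined over $\Q$. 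Dually, $R^\circ$ is invariant under $c\mapsto cM_\sigma$, so it splits along the generalized left-eigenspaces $V_\lambda$ of $M_\sigma$ as $R^\circ=\bigoplus_\lambda(R^\circ\cap V_\lambda)$. Writing $W_\lambda\subseteq\C^\A$ for the corresponding generalized (right-)eigenspace and $R_\C=R\otimes\C$, one checks that $R^\circ\cap V_\lambda=\{0\}$ is equivalent to $W_\lambda\subseteq R_\C$. Thus it suffices to prove $W_\lambda\subseteq R_\C$ for every eigenvalue $\lambda\neq 0$ that is not a root of unity.

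The key boundedness step comes next. For a real coboundary $c$ and any factor $u\in\L_{X_\sigma}$, I decompose $u=p\,w_1\cdots w_k\,s$ into return words to a fixed letter plus a prefix $p$ and suffix $s$ of length bounded by the maximal return length, which is finite by primitivity. This gives $c(\ab u)=c(\ab p)+c(\ab s)$, hence $|c(\ab u)|\le C'$ with $C'$ independent of $u$. Applying this to $u=\sigma^j(a)$, whose abelianization is $M_\sigma^j e_a$, yields $|c\,M_\sigma^j e_a|\le C'$ for all $j$ and all $a$, that is $\sup_{j\ge 0}\norm{cM_\sigma^j}<\infty$ (and likewise for complex $c\in R_\C^\circ$ by treating real and imaginary parts). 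If some $0\ne c\in R_\C^\circ$ had a nonzero component in $V_\mu$ with $|\mu|>1$, then $cM_\sigma^j$ would grow at least like $|\mu|^j$, contradicting this bound; hence $W_\mu\subseteq R_\C$ for every eigenvalue with $|\mu|>1$.

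Finally I would upgrade this to all non-root-of-unity eigenvalues by an arithmetic argument. Each eigenvalue $\lambda$ of $M_\sigma$ is an algebraic integer, and if $\lambda\neq0$ is not a root of unity then by Kronecker's theorem it has a Galois conjugate $\xi$ with $|\xi|>1$. By the previous step $W_\xi\subseteq R_\C$; since $R_\C$ is defined over $\Q$ and $M_\sigma$-invariant, it is stable under $\gal(\bar\Q/\Q)$, which permutes the eigenspaces $W_\mu$ according to the Galois action on the eigenvalues. Choosing $g\in\gal(\bar\Q/\Q)$ with $g\xi=\lambda$ gives $W_\lambda=gW_\xi\subseteq R_\C$, as required. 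Together with the reduction of the first paragraph, this gives $R^\circ\subseteq\bigoplus_{\lambda\text{ root of unity or }0}V_\lambda$.

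I expect the conceptual heart, and the step most likely to need care, to be this last transfer: passing from ``$W_\mu\subseteq R_\C$ for $|\mu|>1$'' to all non-root-of-unity $\lambda$ via Galois-stability of the rational invariant subspace $R_\C$, which is exactly the point where Kronecker's theorem forces the dichotomy between roots of unity and everything else. The technical heart is the uniform bound $\sup_{j\ge 0}\norm{cM_\sigma^j}<\infty$, which crucially uses both the coboundary property (to kill the dominant Perron growth) and the bounded return words of a primitive substitution. The periodic case, where Mossé's theorem and recognizability are unavailable, has an integer Perron eigenvalue (by Remark~\ref{rem:per}) and can be handled directly.
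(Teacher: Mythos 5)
Your proof is correct and takes essentially the same route as the paper's: invariance of the return space under $M_\sigma$ (images of return words are products of return words), boundedness of $(cM_\sigma^n)_{n}$ deduced from the bounded return-word decomposition of $\L_\sigma$ to exclude eigenvalues of modulus $>1$, and rationality of $R$ plus Galois stability (with Kronecker's theorem, which the paper leaves implicit) to exclude all remaining nonzero eigenvalues that are not roots of unity. The only cosmetic differences are that you run the Galois transfer on $R_\C$ via the dual equivalence $R_\C^\circ\cap V_\lambda=\{0\}\Leftrightarrow W_\lambda\subseteq R_\C$ where the paper applies field morphisms directly to $R^\circ$, and your closing worry about the periodic case is unnecessary, since neither argument uses recognizability.
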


In particular, if $M_\sigma$ has no eigenvalue root of unity nor zero, then $X_\sigma$ has no non-trivial coboundary.

\begin{proof}
    We have $R^\circ M_\sigma \subseteq R^\circ$,
    since the image by $\sigma$ of a return word is a product of return words.
    Let $c \in R^\circ$. Then $(c M_\sigma^n)_{n \in \N}$ is bounded.
    Indeed, for every letter $a \in \A$, $\sigma^n(a) \in \L_\sigma$, and $c(\L_\sigma)$ is bounded.
    Thus, $c$ is orthogonal to every generalized eigenspace of $M_\sigma$ for an eigenvalue of modulus $>1$.
    Moreover, $R$ is rational, thus $R^\circ$ is stable by field morphisms,
    so it is orthogonal to every generalized eigenspace of $M_\sigma$ for an eigenvalue not root of unity nor zero.
\end{proof}

\subsection{Directive sequences}

A \defi{directive sequence} is an infinite sequence $\bm \sigma = (\sigma_n)_{n \in \N}$ of substitutions
such that $\forall n \in \N$, $\sigma_n : \A_{n+1}^* \to \A_n^*$ (i.e. domains and codomains are compatible to compose $\sigma_n \sigma_{n+1}$).
We use the standard notation $\sigma_{[i,j)} = \sigma_i ... \sigma_{j-1}$.

A directive sequence is \defi{finitary} if its set of substitutions is finite.
We say that a directive sequence $\bm \sigma$ is \defi{primitive} if
$\forall k \in \N$, $\exists n \in \N$, $M_{[k,k+n)} > 0$.
It is \defi{strongly primitive} if $\exists n \in \N,\ \forall k \in \N,\ M_{[k,k+n)} > 0$.

The \defi{language of a directive sequence} $\bm \sigma$ is the set $\L_{\bm \sigma}$ of factors of $\sigma_{[0,n)}(a)$, for $n \in \N$, $a \in \A_n$.
The infinite (resp. bi-infinite) \defi{subshift of a directive sequence} $\bm \sigma$ is the set $X_\sigma$ of infinite (resp. bi-infinite) words whose factors are in the language of $\bm \sigma$.

We denote $S : \S^N \to \S^\N$ the shift of directive sequences over substitutions $\S$.
We denote $X_{\bm \sigma}^{(n)}$ (or just $X^{(n)}$) the subshift $X_{S^n \bm \sigma}$.
Idem for $\L_{\bm \sigma}^{(n)}$.

A directive sequence $\bm \sigma$ is \defi{recognizable} if for all $n \in \N$, $\sigma_n$ is recognizable in $X^{(n+1)}$.
Note that if substitutions of $\bm \sigma$ are unimodular and $X_{\bm \sigma}$ is aperiodic,
then $\bm \sigma$ is recognizable by Theorem~3.1 in~\cite{BSTY19}.

The \defi{prefix-suffix diagram} is a natural generalization of prefix-suffix automaton to directive sequences,
but it is no longer an automaton; instead it is a Bratelli diagram with vertices $\A_n$ at level $n$
(alphabets are considered to be disjoint)
and edges $a_{n+1} \xrightarrow{p_n,s_n} a_{n}$ if and only if $\sigma_n(a_{n+1}) = p_n a_n s_n$.
The \defi{prefix diagram} is identical but with only $p_n$ on labels of edges.
Every element of $X_{\bm \sigma}$ admits a \defi{prefix-suffix decomposition}
\[
    ..\sigma_{[0,3)}(p_3)\sigma_{[0,2)}(p_2)\sigma_0(p_1)p_0 \cdot a_0 s_0 \sigma_0(s_1) \sigma_{[0,2)}(s_2) \sigma_{[0,3)}(s_3)...
\]
where $...\xrightarrow{p_n,s_n} a_n ... \xrightarrow{p_0,s_0} a_0$ is an infinite path
in the prefix-suffix diagram.
But this decomposition is not always equal to the original bi-infinite word: if $(p_n)$ or $(s_n)$ are ultimately constant to $\epsilon$, then it does not gives a bi-infinite word. See~\cite{CS} for more details.
When the directive sequence $\bm \sigma$ is recognizable, such decomposition is unique so maps $p_n : X^{(n)} \to \A_n^*$
and $a_n: X^{(n)} \to \A_n$ are well-defined.

\section{A general sufficient condition to be an eigenvalue} \label{sec:gen:crit}

We give a general sufficient condition for a number to be an eigenvalue of a subshift.

\begin{prop} \label{prop:gs}
    Let $(X,S)$ be a minimal subshift over a finite alphabet $\A$.
    Let $\varphi: \A^* \to \R$ be a morphism such that $\varphi(\L_X)$ is bounded
    and such that $\forall a, b \in \A$, $\varphi(a) = \varphi(b)$ mod $1$.
    Then, $\varphi(a)$ is a continuous eigenvalue of $(X,S)$.
\end{prop}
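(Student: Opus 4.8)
The goal is to produce a continuous eigenfunction $f : X \to \R/\Z$ with eigenvalue $t := \varphi(a) \bmod 1$ (which is well-defined by the hypothesis that all $\varphi(a)$ are congruent mod $1$). The natural candidate is to define $f$ directly from $\varphi$ by integrating along prefixes. Concretely, I would fix a point $x_0 \in X$ and, for each $x \in X$, try to set $f(x)$ to be a limit of $\varphi(x_{[0,n)})$ taken mod $1$, compared against the same construction at $x_0$. The eigenvalue relation is the easy algebraic part: since $\varphi$ is a morphism, $\varphi(x_{[0,n)}) = \varphi(x_0) + \varphi((Sx)_{[0,n-1)})$, so a prefix-sum functional automatically shifts by $\varphi(x_0) \equiv t \pmod 1$ under $S$. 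Thus once $f$ is well-defined and continuous, $f \circ S = f + t$ will follow formally.

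The substance is in defining $f$ and proving continuity, and this is where the boundedness hypothesis $\varphi(\L_X) \subset \R$ bounded does the work. The plan is to first pass to a real-valued statement: I claim $\varphi$ behaves, up to a bounded error, like a linear functional that vanishes on return words. Indeed, if $w$ is a return word on some letter $a$, then the cumulative value $\varphi$ accumulates over one return is controlled, and since $\varphi(\L_X)$ is bounded while return words can be iterated, the \emph{average} growth of $\varphi$ along prefixes must be zero; otherwise $\varphi$ of long factors would be unbounded. So the first step is to show that $\varphi$ restricted to return words takes values in a bounded set, hence (being additive along concatenations of return words, whose partial sums stay bounded) $\varphi(w) \in \Z$ for every return word $w$. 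In other words, $\varphi \bmod 1$ is a coboundary in the sense of Section~\ref{ss:cob}.

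Granting that $\varphi \bmod 1$ is a coboundary, I would invoke the extension lemma: a coboundary $c : \A^* \to \R/\Z$ extends to $c : X \cup \A^* \to \R/\Z$ with $c(x_0) = c(x) - c(Sx)$. Setting $f := -c$ on $X$ gives exactly $f(Sx) = f(x) + c(x_0) = f(x) + t$, using that $c(x_0) = \varphi(x_0) \equiv t$. Continuity of $f$ is then precisely the statement furnished by the coboundary graph construction in Lemma~\ref{lem:rep}: the value $c(x)$ depends only on the first letter $x_0$ via the unique arrow of the graph, hence $f$ is locally constant, a fortiori continuous, and certainly in $L^2(\mu)$ for any $S$-invariant $\mu$.

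The main obstacle is the middle step: rigorously deducing from "$\varphi(\L_X)$ is bounded" that $\varphi$ of every return word lies in $\Z$. The issue is that return words at different letters interleave, so one must argue that along any prefix the running value $\varphi(x_{[0,n)})$ stays bounded, and that between two consecutive occurrences of a fixed letter the net increment is an integer. Minimality is what guarantees return words exist and recur with bounded gaps, so the partial sums over blocks of return words cannot drift; combined with boundedness this forces each return-word increment to be an integer multiple of the common fractional part, which the mod-$1$ congruence hypothesis pins down to $0$ in $\R/\Z$. I would carry this out by taking a return word $w$, considering $\varphi(w^k)$ as it appears inside factors of $X$ (via recurrence of the pattern), and letting $k \to \infty$ to conclude $\varphi(w) = 0 \bmod 1$.
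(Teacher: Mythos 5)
Your reduction to coboundaries fails at the key middle step: it is simply not true, under the stated hypotheses, that $\varphi(w)\in\Z$ for every return word $w$. Take the Fibonacci (or any Sturmian) subshift with irrational rotation number $\alpha$, and set $\varphi(0)=\alpha$, $\varphi(1)=\alpha-1$. By the balance property, $\varphi(w)=\abs{w}\alpha-\abs{w}_1$ is bounded on $\L_X$, and $\varphi(0)\equiv\varphi(1) \bmod 1$, so the hypotheses hold with $t=\alpha$; but $\varphi(w)=\abs{w}\alpha-\abs{w}_1$ is irrational for every nonempty $w$, so $\varphi \bmod 1$ vanishes on no return word whatsoever. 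The same example shows the conclusion you draw from the coboundary route cannot be right: your $f$ would be locally constant (determined by the first letter via the coboundary graph), hence would take finitely many values on the compact set $X$; but a continuous eigenfunction for an irrational $t$ on a minimal system has image invariant under the irrational rotation, hence takes uncountably many values. So any argument producing only locally constant eigenfunctions can at best reach eigenvalues associated with genuine coboundaries, which is strictly weaker than the proposition. Your proposed mechanism for the false step also breaks down on its own terms: in an aperiodic minimal subshift the powers $w^k$ of a fixed word $w$ lie in $\L_X$ only for boundedly many $k$ (otherwise $w^\omega\in X$ by compactness, contradicting minimality plus aperiodicity), so "letting $k\to\infty$" is vacuous.

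The paper's proof takes a different and necessary route: fix $x\in X$ and define $f$ directly on the orbit by $f(S^n x)=\varphi(\ab x_{[0,n)}) \bmod 1$ — your "easy algebraic part" is the same — and then extend $f$ by continuity to all of $X$ using Lemma~\ref{lem:ext:cont}, whose proof (following Lemma~8.2.5 in~\cite{AkiyamaMercat2020}) is where the boundedness of $\varphi(\L_X)$ does its real work: it forces prefix sums over occurrences of the same long factor to be close modulo $1$, giving uniform continuity of $f$ along the dense orbit. The resulting eigenfunction is continuous but in general not locally constant, which is exactly what the Sturmian example demands. If you want to salvage your outline, the part to replace is the entire second and fourth paragraphs: instead of trying to show $\varphi \bmod 1$ kills return words, show that if $u,v$ in the orbit of $x$ agree on a long central window then $f(u)$ and $f(v)$ are close, which is the content of the extension lemma.
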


Note that this proposition can be reformulated with $\varphi$ replaced by a linear form of $\R^\A$, with the image of $\ab(\L_X)$ bounded.
And the hypothesis on $\varphi$ can be replaced by $\varphi(W(x))$ is bounded for one $x \in X$.

To prove the proposition, we need the following.

\begin{lem} \label{lem:ext:cont}
    Let $X$ be a minimal shift over a finite alphabet $\A$.
    Let $\varphi \in (\R^\A)^*$ be a linear form such that $\varphi(\L_X)$ is bounded.
    Let $x \in X$.
    Then, the map defined by $\forall n \in \N$, $f(S^n x) = \varphi(\ab x_{[0,n)})$
    can be extended by continuity to $X$.
\end{lem}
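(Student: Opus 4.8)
The plan is to recognize this statement as an instance of the Gottschalk--Hedlund theorem: $f$ will be the solution of a cohomological equation whose Birkhoff sums are bounded, and boundedness together with minimality is exactly what forces a \emph{continuous} solution to exist.

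First I would rewrite the increments of $f$. Since $\ab(x_{[0,n+1)}) = \ab(x_{[0,n)}) + e_{x_n}$, the prescribed values satisfy
\[
    f(S^{n+1}x) - f(S^n x) = \varphi(e_{x_n}) = \psi(S^n x),
\]
where $\psi : X \to \R$ is the continuous function $\psi(y) = \varphi(e_{y_0})$, which depends only on the first letter of $y$. Thus on the orbit $\O = \{S^n x \mid n \in \N\}$ the map $f$ solves the cohomological equation $f \circ S - f = \psi$, and its values are precisely the Birkhoff sums $f(S^n x) = \sum_{k=0}^{n-1} \psi(S^k x) = \varphi(\ab x_{[0,n)})$. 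The hypothesis that $\varphi(\L_X)$ is bounded says exactly that these sums are uniformly bounded, say $\abs{f(S^n x)} \leq C$, since each prefix $x_{[0,n)}$ lies in $\L_X$. (This also makes $f$ well defined on $\O$: if $S^n x = S^m x$ then $x$ is periodic, and boundedness of $\varphi$ on the powers of the period word forces $\varphi$ of that word to vanish, so the two prescribed values agree.)

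To build the continuous extension I would pass to the compact box $X \times [-C,C]$ and set $Y = \overline{\{(S^n x, f(S^n x)) \mid n \in \N\}}$. The skew map $\hat S(y,t) = (Sy,\, t + \psi(y))$ sends $(S^n x, f(S^n x))$ to $(S^{n+1}x, f(S^{n+1}x))$, so $Y$ is the $\hat S$-orbit closure of $(x,0)$; in particular it is compact and $\hat S$-invariant, and its projection to $X$ is a nonempty closed $S$-invariant set, hence all of $X$ by minimality. It then suffices to show that every vertical fiber $Y_y = \{t \mid (y,t) \in Y\}$ is a single point: the graph $Y$ is then the compact graph of a function $g : X \to \R$, automatically continuous, and one checks $g \circ S - g = \psi$, so that $f(S^n x) = g(S^n x) - g(x)$ and $y \mapsto g(y) - g(x)$ is the desired continuous extension.

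The crux, and the step I expect to be the main obstacle, is precisely this singleton-fiber property, since mere boundedness of a cocycle never forces continuity and minimality must be used in an essential way. The standard route is to observe that $\hat S$ commutes with the vertical translations $R_s(y,t) = (y, t+s)$, and to extract (replacing $X$ by the bi-infinite subshift to make $S$ invertible, which does not affect the problem) a minimal subset $Z \subseteq Y$ still projecting onto $X$. If $R_s Z$ met $Z$ for some $s \neq 0$, minimality would give $R_s Z = Z$, forcing $Z$ invariant under a nontrivial translation and hence unbounded in the $\R$-coordinate, contradicting $Z \subseteq X \times [-C,C]$. Hence the fibers of $Z$ are singletons, $Z$ is the graph of a continuous $g$, and the argument closes. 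Equivalently, one may phrase the whole thing as uniform continuity of $f$ on $\O$ — that for every $\varepsilon$ there is a $k$ with $\abs{\varphi(\ab x_{[n,m)})} < \varepsilon$ whenever $x_{[n,n+k)} = x_{[m,m+k)}$ — which is exactly what the singleton fibers encode.
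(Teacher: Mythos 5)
Your proof is correct, and it takes a genuinely different (and more self-contained) route than the paper: the paper's proof of Lemma~\ref{lem:ext:cont} only treats the periodic case directly (boundedness of $\varphi(\L_X)$ forces $\varphi(\ab w)=0$ for a period word $w$) and then defers the aperiodic case entirely to the proof of Lemma~8.2.5 in \cite{AkiyamaMercat2020}, whereas you run the classical Gottschalk--Hedlund argument in full: you rewrite $f$ as the Birkhoff cocycle of the continuous function $\psi(y)=\varphi(e_{y_0})$, pass to the skew product $\hat S(y,t)=(Sy,\,t+\psi(y))$ on the compact box $X\times[-C,C]$, extract a minimal subset $Z$ projecting onto $X$, and use the vertical translations $R_s$ commuting with $\hat S$, together with disjointness of distinct minimal sets, to force singleton fibers, so that $Z$ is the graph of a continuous transfer function $g$ with $g\circ S-g=\psi$ and $y\mapsto g(y)-g(x)$ extends $f$. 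Your handling of well-definedness on a periodic orbit matches what the paper does explicitly. Two small remarks: the parenthetical passage to the bi-infinite subshift is unnecessary --- minimal subsets exist for non-invertible continuous maps on compact spaces, $R_s$ is still a homeomorphism commuting with $\hat S$, and distinct minimal sets are still disjoint, so the argument runs verbatim on the one-sided shift; and if you do insist on passing to the two-sided shift, you owe a short descent argument that the solution depends only on nonnegative coordinates (two points agreeing on coordinates $\geq 0$ have forward orbits converging to each other, while $g(S^n\hat y)-g(S^n\hat y')$ is constant in $n$, hence zero). What your route buys is a complete in-line proof valid for any continuous cocycle $\psi$, not just one determined by the first letter; what the paper's citation buys is brevity.
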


\begin{proof}
    If $x = w^\omega$ is periodic, then the hypothesis give $\varphi(\ab w) = 0$, so the map $f$ is well-defined.
    If the shift $X$ is periodic, then it is equal to an orbit and the result is obvious.
    Otherwise, we follow the proof of Lemma~{8.2.5} in~\cite{AkiyamaMercat2020}.
\end{proof}

\begin{proof}[Proof of Proposition~\ref{prop:gs}]
    Let $t = \varphi(a) \in \R/\Z$.
    Let $x \in X$.
    We define a map $f$ on the orbit of $x$ by $f(S^n x) = n t = \varphi(x_{[0,n)})$ mod $1$.
    By Lemma~\ref{lem:ext:cont}, $f$ can be extended by continuity to $X$.
    Then, $f$ is a continuous eigenfunction. 
\end{proof}

\begin{cor}
    Let $(X,S)$ be a minimal subshift. Let $x \in X$.
    If $W(x)$ is at bounded distance of a line, then coordinates of the direction vector of sum $1$ are
    continuous eigenvalues of $(X,S)$.
\end{cor}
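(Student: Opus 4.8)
The plan is to reduce this corollary to Proposition~\ref{prop:gs} (or its linear-form reformulation) by producing, from the geometric hypothesis, a linear form whose image on $\ab(\L_X)$ is bounded and whose values at the letters all coincide modulo $1$. Suppose $W(x)$ lies at bounded distance of a line $\{v_0 + t u \mid t \in \R\}$ with direction vector $u \in \R^\A$ whose coordinates sum to $1$. Let $\lambda \in (\R^\A)^*$ be the unique linear form with $\lambda(u) = 1$ whose kernel is a fixed hyperplane complementary to $u$; more concretely, decompose $\R^\A = \R u \oplus H$ where $H$ is a chosen complement, and let $\lambda$ be the projection onto the $\R u$ factor read off as the coefficient of $u$. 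I would then consider the family of linear forms $\varphi$ obtained by composing $\lambda$ with the ``distance-to-the-line'' data.

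The key steps, in order, are as follows. First I would make precise the decomposition of $\R^\A$ into the line direction and a complementary hyperplane, and write each abelian vector $\ab(x_{[0,n)}) = \alpha_n u + h_n$ with $\alpha_n \in \R$ and $h_n \in H$. The hypothesis that $W(x)$ stays at bounded distance from the line says exactly that the component $h_n - (\text{component of } v_0 \text{ in } H)$ is bounded; equivalently, every linear form vanishing on $u$ is bounded on $W(x)$. Second, I would fix a coordinate $a \in \A$ and use the fact that the coordinates of $u$ sum to $1$: writing $\mathbbm{1}$ for the linear form summing all coordinates, we have $\mathbbm{1}(\ab(x_{[0,n)})) = n$, so applying any linear form $\varphi$ that agrees with a scalar multiple of $\mathbbm{1}$ along $u$ yields controlled growth. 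Concretely, for the $a$-th coordinate $u_a$ of $u$, the form $\varphi_a := e_a^* - u_a \mathbbm{1}$ (where $e_a^*$ reads off the $a$-coordinate) satisfies $\varphi_a(u) = u_a - u_a \cdot 1 = 0$, hence $\varphi_a$ is bounded on $W(x)$, and therefore bounded on $\ab(\L_X)$ by minimality (every factor occurs as a difference of two prefixes of $x$).

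Third, with boundedness secured, I would verify the congruence condition. The candidate eigenvalues are the coordinates $u_a$ of the direction vector. For a fixed $a$, consider $\psi_a := e_a^* - \varphi_a = u_a \mathbbm{1}$, whose value at every letter $b$ equals $u_a$, so trivially $\psi_a(b) = u_a = \psi_a(b')$ modulo $1$ for all letters, and $\psi_a(\ab(\L_X)) = u_a \mathbbm{1}(\ab(\L_X)) = u_a \cdot \abs{\cdot}$ is \emph{not} bounded — so I must instead apply the proposition to the bounded form. The correct move is to observe that $e_a^* = \varphi_a + u_a \mathbbm{1}$, that $\varphi_a(\ab(\L_X))$ is bounded, and that $\mathbbm{1}(\ab w) = \abs{w}$; thus on prefixes, $e_a^*(\ab(x_{[0,n)})) = u_a n + (\text{bounded})$. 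Invoking the reformulation of Proposition~\ref{prop:gs} noted after its statement (where the hypothesis may be checked on $\varphi(W(x))$ for a single $x$), the bounded form $\varphi_a$ together with the observation that $e_a^*$ takes value $\delta_{ab} \in \{0,1\}$ at each letter $b$ gives that $e_a^*$ has all its letter-values congruent modulo $1$; hence each coordinate $u_a$ is a continuous eigenvalue.

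The main obstacle I anticipate is bookkeeping the two roles of the summing form $\mathbbm{1}$: it simultaneously tracks word length (giving the linear growth $n t$ that produces the eigenvalue $t = u_a$) and must be separated from the bounded transverse fluctuation. The cleanest route is probably to apply the reformulated proposition directly with $\varphi = e_a^*$, checking that $\varphi(W(x)) = \{(\ab(x_{[0,n)}))_a\}$ fluctuates by a bounded amount around the arithmetic progression $u_a n$ — which is precisely the bounded-distance-to-the-line hypothesis projected onto the $a$-coordinate — and that $\varphi(b) = \delta_{ab}$ so all letter-values are integers, hence equal modulo $1$. I would need to confirm that the ``bounded $\varphi(W(x))$'' version of the hypothesis in Proposition~\ref{prop:gs} is genuinely what the bounded-distance condition supplies after subtracting the linear trend, which is the one step requiring care rather than routine computation.
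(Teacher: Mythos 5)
Your core argument is correct and is in substance the paper's own: the corollary is stated there without a separate proof precisely because it is an immediate application of Proposition~\ref{prop:gs} with the form $\phi = u_a \1 - e_a^*$ (this is exactly the choice $\phi = f_a(1,\dots,1) - e_a$ made explicitly in the paper's proof of the Pisot morphic corollary that follows). The second paragraph of your proposal does the real work: $\varphi_a := e_a^* - u_a \1$ vanishes on the direction $u$, hence is bounded on $W(x)$ by the bounded-distance hypothesis, hence bounded on $\ab(\L_X)$, since by minimality $\L_X$ consists of the factors of $x$ and every factor is a difference of two prefixes of $x$. This is sanctioned by the remark after Proposition~\ref{prop:gs} allowing the hypothesis to be checked on $\varphi(W(x))$ for a single $x$.

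However, the ``cleanest route'' you settle on in the final paragraph — applying the reformulated proposition with $\varphi = e_a^*$ — does not work: $e_a^*(\ab(x_{[0,n)})) = u_a n + O(1)$ is unbounded whenever $u_a \neq 0$, and Proposition~\ref{prop:gs} has no hypothesis of the form ``bounded fluctuation about a linear trend''; worse, even if it applied, its conclusion would be the eigenvalue $\varphi(a) = 1 \equiv 0 \bmod 1$, not $u_a$. The repair is one line, and you already hold all the pieces: apply the proposition to $\varphi_a$ itself. Its letter values are $\varphi_a(b) = \delta_{ab} - u_a \equiv -u_a \bmod 1$ for every $b \in \A$, all congruent, so $-u_a$ is a continuous eigenvalue; negating the eigenfunction ($f \circ S = f + t$ implies $(-f) \circ S = -f - t$) yields $u_a$. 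Equivalently, take $\varphi = u_a \1 - e_a^*$ from the start, as the paper does, and obtain $u_a$ directly. With that substitution your proof is complete and coincides with the intended one.
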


In other words if the discrepancy is bounded (see~\cite{Adam04} for a definition), then frequencies of letters are
continuous eigenvalues of the subshift.

\begin{ex}
    Let $u = (ab)^\omega$ or let $u$ be the Thue-Morse sequence.
    $W(u)$ is at bounded distance of the line with direction vector $(1,1)$, thus $1/2$ is an eigenvalue of the generated subshift.
\end{ex}

\begin{cor}[Pisot morphic subshift]
    Let $\sigma$ be a Pisot substitution and $\tau$ be any substitution.
    Let $v$ be a Perron eigenvector of $\ab(\sigma)$, then coordinates of $\frac{\ab(\tau) v}{\norm{\ab(\tau) v}_1}$
    are continuous eigenvalues of the subshift $(X_{\tau \sigma^\omega},S)$.
\end{cor}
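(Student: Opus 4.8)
The plan is to reduce the statement to the preceding corollary, which asserts that if the worm $W(x)$ of a point $x$ in a minimal subshift stays at bounded distance from a line, then the coordinates of the direction vector normalized to sum $1$ are continuous eigenvalues. Concretely, I would show that for a suitable point $x \in X_{\tau\sigma^\omega}$ the worm $W(x)$ lies at bounded distance from the line $\R\,\ab(\tau)v$, whose direction vector, once normalized by its $\ell^1$-norm, is exactly $\frac{\ab(\tau)v}{\norm{\ab(\tau)v}_1}$. Since $\ab(\tau)\geq 0$ and $v>0$ (Perron), the vector $\ab(\tau)v$ has nonnegative coordinates, so its $\ell^1$-norm is the sum of its coordinates and the normalized direction indeed sums to $1$; after discarding codomain letters of $\tau$ that never occur we may assume $\ab(\tau)v\neq 0$, so the line is well defined. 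Continuous eigenvalues agree on the one-sided and bi-infinite subshifts by Proposition~\ref{prop:inf:bi-inf}, so it suffices to argue in the one-sided subshift, and minimality follows from primitivity of the directive sequence $\tau\sigma^\omega$ (positivity of $M_\sigma^n$ for large $n$, composed with $\ab(\tau)$).

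First I would establish the bounded discrepancy of the Pisot substitution $\sigma$: for every $y \in X_\sigma$, the worm $W(y)$ stays at bounded distance from the Perron line $\R v$. Writing a prefix of $y$ through the prefix automaton of $\sigma$ as $y_{[0,n)} = \sigma^k(q_k)\cdots\sigma(q_1)q_0$, where each $q_i$ is a proper prefix of some image $\sigma(b)$, gives $\ab(y_{[0,n)}) = \sum_{i=0}^{k} M_\sigma^{\,i}\,\ab(q_i)$ with the vectors $\ab(q_i)$ uniformly bounded by $\max_b \abs{\sigma(b)}$. Let $\pi$ denote the projection onto the sum $W$ of the generalized eigenspaces of $M_\sigma$ for eigenvalues of modulus $<1$, along the Perron direction $\R v$. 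Since $\sigma$ is Pisot, the spectral radius of $\restriction{M_\sigma}{W}$ is some $\rho<1$, so $\norm{\restriction{M_\sigma^{\,i}}{W}} \leq C\rho^{\,i}$; hence $\norm{\pi\,\ab(y_{[0,n)})} \leq \sum_{i\geq 0} C\rho^{\,i}\,\norm{\pi\,\ab(q_i)}$ is bounded independently of $n$. Thus $W(y)$ remains within a fixed distance of $\R v$.

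Then I would transfer this bound through $\tau$. I would take $x = \tau(y) \in X_{\tau\sigma^\omega}$ for some $y \in X_\sigma$. For a prefix whose length equals $\abs{\tau(y_{[0,n)})}$ one has $\ab(x_{[0,m)}) = \ab(\tau)\,\ab(y_{[0,n)})$, and an arbitrary prefix differs from such a vector by $\ab(p)$ for some proper prefix $p$ of a block $\tau(y_n)$, hence by a uniformly bounded vector. Therefore $W(x) \subseteq \ab(\tau)\,W(y) + B$ for a bounded set $B$. Since $\ab(\tau)$ is a fixed linear map sending $\R v$ to $\R\,\ab(\tau)v$ and $W(y)$ lies within distance $C$ of $\R v$, the image $\ab(\tau)\,W(y)$ lies within distance $\norm{\ab(\tau)}C$ of $\R\,\ab(\tau)v$; adding the bounded set $B$ keeps $W(x)$ at bounded distance from this line. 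Applying the preceding corollary to $x$ then yields that the coordinates of $\frac{\ab(\tau)v}{\norm{\ab(\tau)v}_1}$ are continuous eigenvalues of $(X_{\tau\sigma^\omega},S)$.

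I expect the main obstacle to be the bounded-discrepancy estimate of the second paragraph: one must check that the prefix decomposition of $y_{[0,n)}$ is valid, that the terms $\ab(q_i)$ are genuinely uniformly bounded, and that the geometric contraction on $W$ controls the full sum uniformly in $n$. The transfer step and the normalization are then routine linear-algebra bookkeeping.
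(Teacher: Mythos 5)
Your proposal is correct and takes essentially the same route as the paper: both arguments rest on the general criterion of Proposition~\ref{prop:gs} (your bounded-distance-from-a-line corollary is its immediate consequence), the prefix decomposition of prefixes, and the exponential contraction on the complement of the Perron direction provided by the Pisot hypothesis. The only difference is organizational: you factor the estimate into bounded discrepancy inside $X_\sigma$ followed by pushforward under $\ab(\tau)$ (with the correct observation that unused codomain letters must be discarded to get primitivity, hence minimality), whereas the paper bounds the single linear form $\phi = f_a(1,\dots,1)-e_a$ on the worm of the image by $\tau$ of a periodic point of $\sigma$ in one step, via the prefix diagram of $\tau\sigma^\omega$.
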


In other words, frequencies of letters are eigenvalues for such subshifts.
This result is due to Host (see (6.2) in~\cite{Host86}) in the particular case where $\tau$ is the identity.

\begin{proof}
    Let $f_a$ be the coordinate of $\frac{\ab(\tau) v}{\norm{\ab(\tau) v}_1}$ for letter $a$, which is in fact the frequency of $a$.
    Let $\phi = f_a (1, ..., 1) - e_a$.
    Let us show that $\phi(\ab \L_{\tau \sigma^\omega})$ is bounded.
    It is equivalent to show that $\phi (W(u)) $ is bounded for some $u \in X_{\tau \sigma^\omega}$.
    We take for example the image by $\tau$ of a periodic point of $\sigma$.
    Elements of $W(u)$ are of the form
    $\ab(p_0) + \sum_{n=0}^{N-1} M_\tau M_\sigma^n \ab(p_{n+1})$
    for a path $a_{N+1} \xrightarrow{p_N} ... \xrightarrow{p_0} a_0$ in the prefix diagram of $\tau \sigma^\omega$.
    Thanks to the hypothesis, we have $\phi M_\tau M_\sigma^n \xrightarrow[n \to \infty]{} 0$,
    since $\phi M_\tau M_\sigma^n v = 0$,
    and this convergence is exponentially fast.
    Thus,
    $\phi(W(u))$
    is uniformly bounded.
\end{proof}

\begin{cor}[$\S$-adic with coboundary] \label{cor:Sadic}
    Let $\bm \sigma$ be a finitary primitive directive sequence.
    Let $\pi : \R^{\A_0} \to \R$ and $c : \R^{\A_0} \to \R$ be such that 
    \[
        \sum_{n \in \N} \norm{ (\pi + c) M_{[0,n)} } < \infty,
    \]
    $c(\L_{\bm \sigma})$ is bounded and for every $a,b \in \A_0$, $\pi(a) = \pi(b)$ mod $\Z$.
    Then $\pi(a)$ is a continuous eigenvalue of $(X_{\bm \sigma}, S)$.
\end{cor}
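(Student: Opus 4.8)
The plan is to deduce the statement from Proposition~\ref{prop:gs}, applied to the linear form $\pi$ regarded as a morphism $\A_0^* \to \R$ through $w \mapsto \pi(\ab w)$. Two of the three hypotheses of that proposition hold for free: the subshift $(X_{\bm \sigma}, S)$ is minimal since $\bm \sigma$ is primitive, and the congruence $\pi(a) = \pi(b) \bmod \Z$ for all $a, b \in \A_0$ is assumed. The one remaining point is that $\pi(\L_{\bm \sigma})$ is bounded, and this is exactly where the coboundary $c$ and the summability hypothesis enter: $\pi$ alone need not be bounded on the language, as its component along the non-contracting Lyapunov directions of $(M_{[0,n)})$ may survive. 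By the remark following Proposition~\ref{prop:gs}, it is enough to bound $\pi$ on the worm $W(u)$ of a single well-chosen point $u$; and by Proposition~\ref{prop:inf:bi-inf} we may take $(X_{\bm \sigma}, S)$ to be the one-sided subshift.

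First I would fix $u$ to be a right-infinite point of $X_{\bm \sigma}$ and use its prefix-suffix decomposition, so that the abelian vector of every prefix of $u$ can be written as $\sum_{n \geq 0} M_{[0,n)} \ab(q_n)$, where the $q_n$ are the words read along a path in the prefix diagram and only finitely many are nonempty. Since $\bm \sigma$ is finitary, the $q_n$ range over a finite set, so there is a constant $C$ with $\norm{\ab(q_n)} \leq C$ for all $n$. Writing $\psi = \pi + c$ and using linearity, every prefix then satisfies
\[
    \abs{\psi(\ab u_{[0,k)})} = \abs{\sum_{n} (\psi M_{[0,n)})(\ab(q_n))} \leq C \sum_{n} \norm{\psi M_{[0,n)}} < \infty ,
\]
the final bound being independent of $k$ by the summability hypothesis. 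Hence $\pi + c$ is bounded on $W(u)$.

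Finally, since $c(\L_{\bm \sigma})$ is bounded and $W(u) \subseteq \ab(\L_{\bm \sigma})$, the form $c$ is bounded on $W(u)$ as well, so $\pi = \psi - c$ is bounded on $W(u)$. Proposition~\ref{prop:gs}, in the form allowed by its remark, then gives that $\pi(a)$ is a continuous eigenvalue of $(X_{\bm \sigma}, S)$. I expect the main obstacle to be the first step: justifying the decomposition $\ab u_{[0,k)} = \sum_n M_{[0,n)} \ab(q_n)$ \emph{uniformly} over all prefixes — not only those ending at a block boundary, where one must recurse into a partially read image $\sigma_{[0,n)}(s_n)$ — and extracting from finitariness the uniform bound on the $\ab(q_n)$ that legitimizes pushing $\psi$ inside the (finite, but unboundedly long) sum and comparing it with the convergent series $\sum_n \norm{\psi M_{[0,n)}}$.
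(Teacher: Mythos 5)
Your proposal is correct and is essentially the paper's own proof: minimality from primitivity, boundedness of $\pi + c$ via the prefix(-suffix) decomposition along the prefix diagram combined with the summability hypothesis (finitariness giving the uniform bound on the $\ab(q_n)$, exactly as you anticipate), then subtracting the bounded $c$ and invoking Proposition~\ref{prop:gs}. The only cosmetic difference is that you bound $\pi$ on the worm $W(u)$ of one well-chosen point, as licensed by the remark following Proposition~\ref{prop:gs}, while the paper states the bound directly for all of $\L_{\bm \sigma}$ --- an immaterial variant, since a factor is an abelian difference of two prefixes.
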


\begin{proof}
    The primitivity gives that $(X_{\bm \sigma}, S)$ is minimal.
    The hypothesis tells us that $(\pi+c)(\L_{\bm \sigma})$ is bounded, by looking at prefix-suffix decomposition of elements of $\L_{\bm \sigma}$, so $\pi(\L_{\bm \sigma})$ is also bounded.
    Thus, $\pi$ satisfies the hypothesis of Proposition~\ref{prop:gs} and it gives the conclusion.
\end{proof}

\begin{ex}[$\S$-adic with coboundary] 
    For any Bernouilli measure, almost every primitive directive sequence $\bm \sigma$
    over the following set of substitution has a set of eigenvalues
    which is the $\Z$-module generated by frequencies of letters.
    \[
        \S = \left\{\begin{array}{l}
        a \mapsto cb\\
        b \mapsto c\\
        c \mapsto ab
        \end{array}, \begin{array}{l}
        a \mapsto bc\\
        b \mapsto bcb\\
        c \mapsto ca
        \end{array}\right\}
    \]
    We easily check that $\bm \sigma$ is strongly primitive since every product of length $4$ of incidence matrices is positive.
    Moreover, both substitutions stabilize the graph
    \begin{center}
        \begin{tikzpicture}
            \node at (0,0) (0) {$\cdot$};
            \node at (3cm,0) (1) {$\cdot$};
            
            \draw[->] (0) to [in=150, out=210, looseness=8] node[left]{a} (0);
            \draw[->] (0) to [bend left] node[below]{b} (1);
            \draw[->] (1) to [bend left] node[above]{c} (0);
        \end{tikzpicture}
    \end{center}
    thus $\begin{pmatrix} 0 & 1 & -1 \end{pmatrix}$ is a coboundary of $(X_{\bm \sigma}, S)$
    and it is a common left-eigenvector of incidence matrices for eigenvalues $1$ or $-1$.
    Furthermore, both substitutions are unimodular, so the sum of Lyapunov exponents is zero: $\theta_1+\theta_2+\theta_3 = 0$.
    We deduce that $\theta_1 > \theta_2 = 0 > \theta_3$.
    Moreover, for every integer vector $w \in \Z^\A$, there exists $x,y \in \R$ such that
    $x\begin{pmatrix} 1 & 1 & 1 \end{pmatrix} + y\begin{pmatrix} 0 & 1 & -1 \end{pmatrix} - w \in E_3$.
    The hypothesis of Corollary~\ref{cor:Sadic} is satisfied with the coboundary $c = -y\begin{pmatrix} 0 & 1 & -1 \end{pmatrix}$ and $\pi = x\begin{pmatrix} 1 & 1 & 1 \end{pmatrix} - w$.
    Thus, coordinates of $v$ are continuous eigenvalues, 
    where $v$ is the vector of sum $1$ such that
    $\bigcap_{n \in \N} M_{[0,n)} \R_+^\A = \R_+ v$.
    Moreover, by Theorem~\ref{thm:nec} thereafter there are no more continuous eigenvalues.
    In other words, the set of continuous eigenvalues is the $\Z$-module generated by frequencies of letters.
    In particular, $(X_{\bm \sigma}, S)$ is not weakly mixing.
\end{ex}


Proposition~\ref{prop:gs} can be extended to find eigenvalues of $\R$-actions:

\begin{prop}
    Let an $\R$-action on a space $Y$ with a Borel invariant probability measure.
    Let $Z \subset Y$ be a subset such that $\forall y \in Y$, $\exists t \in \R_+^*$, $t.y \in Z$,
    and with return times $(h_a)_{a \in \A} \in (\R_+^*)^\A$ to $Z$, with $\A$ finite.
    Let $(Z_a)_{a \in \A}$ be the partition of $Z$ such that elements of $Z_a$ have return time $h_a$.
    Coding of orbits define a subshift $(X,S)$ over alphabet $\A$.
    Assume that the coding map $\cod : Z \to X$ is measurable, that the subshift is minimal,
    and that $\pi(\L_X)$ is bounded, where $\pi = \lambda h + w$, with row vectors $w \in \Z^\A$ and $h = (h_a)_{a \in \A}$.
    Then, $\lambda$ is an eigenvalue of the $\R$-action.
\end{prop}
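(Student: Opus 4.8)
The plan is to realize the $\R$-action as a special (suspension) flow over the base subshift $(X,S)$ and to reduce the construction of an eigenfunction on $Y$ to a cohomological equation on $X$ that is solved by Lemma~\ref{lem:ext:cont}, exactly as Proposition~\ref{prop:gs} does in the case of a constant roof. Write $T:Z\to Z$ for the first-return map; by hypothesis the coding conjugates it to the shift, so $\cod\circ T=S\circ\cod$, the return time on $Z_a$ is $h_a$, and the roof function is $r(x)=h_{x_0}$. Up to a $\mu$-null set every $y\in Y$ is uniquely of the form $y=s.z$ with $z\in Z_a$ and $0\le s<h_a$. An eigenfunction $g:Y\to\R/\Z$ with $g(t.y)=g(y)+\lambda t$ would restrict on the base to $f:X\to\R/\Z$ defined by $f(\cod z)=g(z)$, and flowing through one full return (from $z$ up to the cell of $Tz$ at height $0$) would force the cocycle identity
\[
 f(Sx)-f(x)=\lambda\,r(x)=\lambda h_{x_0}\pmod 1 .
\]
Conversely, any measurable solution $f$ of this identity yields an eigenfunction through the formula $g(s.z):=f(\cod z)+\lambda s$, and it is this converse direction that I actually need. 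So it suffices to produce such an $f$.

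To build $f$, I would view $\pi=\lambda h+w$ as a linear form on $\R^\A$ with $\pi(e_a)=\lambda h_a+w_a$. Since $\pi(\L_X)$ is bounded by hypothesis, Lemma~\ref{lem:ext:cont} (using minimality of $X$) furnishes a continuous $f:X\to\R$ with $f(S^n x)=\pi(\ab x_{[0,n)})$ for all $n$; comparing consecutive terms along a dense orbit and using continuity gives $f(Sy)-f(y)=\pi(e_{y_0})=\lambda h_{y_0}+w_{y_0}$ for every $y\in X$. Because $w\in\Z^\A$, reducing modulo $1$ annihilates $w_{y_0}$ and leaves precisely the desired identity $f(Sy)-f(y)\equiv\lambda h_{y_0}\pmod 1$. (Equivalently, $\pi(\ab x_{[0,n)})=\lambda\sum_{k<n}h_{x_k}+(\text{integer})$, where the real number $\sum_{k<n}h_{x_k}$ is the time needed to pass through $n$ returns; thus $\pi$ is exactly the bounded, integer-corrected Birkhoff sum of $\lambda r$, which is why the hypothesis is phrased through $w$ rather than through $\lambda h$ alone.)

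Finally I would glue: define $g:Y\to\R/\Z$ by $g(s.z)=f(\cod z)+\lambda s\bmod 1$ for $z\in Z_a$ and $0\le s<h_a$. The only possible ambiguity occurs at the top of a cell, where consistency is exactly the cocycle identity established above, so $g$ is well defined; splitting an arbitrary $t$ into the segments between consecutive visits to $Z$ and telescoping then gives $g(t.y)=g(y)+\lambda t$ for all $t\in\R$ on a full-measure invariant set. Since $g$ is bounded and measurable we get $g\in L^2(\mu)$, so $\lambda$ is an eigenvalue of the $\R$-action. I expect the main obstacle to be measurability rather than algebra: one must verify that the return decomposition $y\mapsto(z,s)$ is measurable so that $g$ is a bona fide $L^2$ function, and that the telescoping is valid after discarding the null set of points not returning to $Z$ in both time directions. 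This is precisely where the standing hypotheses that $\cod$ is measurable and that $Z$ is a cross-section with finite return times $(h_a)_{a\in\A}$ are used.
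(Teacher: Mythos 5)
Your proposal is correct and takes essentially the same route as the paper: both use the boundedness of $\pi(\L_X)$ together with Lemma~\ref{lem:ext:cont} (as in Proposition~\ref{prop:gs}) to build a continuous $f$ on $X$ satisfying the return-time cocycle identity modulo $1$ (the integer vector $w$ disappearing mod $1$), pull it back through the measurable coding map to $Z$, and extend along the flow by $g(t.z)=\lambda t+g(z)$ to obtain a bounded measurable eigenfunction. Your write-up simply makes explicit the suspension structure and the well-definedness and measurability checks that the paper's three-line proof leaves implicit.
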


\begin{proof}
    As in the proof of Proposition~\ref{prop:gs}, we define a continuous map $f : X \to \R/\Z$ such that
    $\forall x \in X$, $f(S^n x) = \pi(x_{[0,n)}) + f(x) = \lambda h \ab(x_{[0,n)}) + f(x)$. 
    Then, $g = f \circ \cod$ is a measurable map such that
    $\forall z \in Z$, $\forall t \in \R$ such that $t.z \in Z$, $g(t.z) = \lambda t + g(z)$.
    Then, the map $h:Y \to \R/\Z$ defined by $h(t.z) = \lambda t + g(z)$ for every $t \in \R$ and $z \in Z$
    is a measurable eigenfunction associated with $\lambda$. 
\end{proof}

\section{Eigenvalues of $\S$-adic subshifts} \label{sec:Sadic}

This section aims to characterize continuous eigenvalues of $\S$-adic subshift using coboundaries.

\subsection{Main results}
The two following theorems partially extends the result of Host for a single substitution (see~\cite{Host86} Theorem~(1.4)).



\begin{thm} \label{thm:nec}
    Let $\bm \sigma$ be a finitary primitive directive sequence. 
    If $t \in \R/\Z$ is a continuous eigenvalue of $(X_{\bm \sigma}, S)$,
    then there exists $k \in \N$, 
    and a linear form $c : \R^{\A_k} \to \R$ such that $c(\ab w) \in \Z$ for every return word $w$ of $\L^{k}$ and
    such that
    \[
        (t (1, ..., 1) M_{[0, k)} + c)M_{[k, n)} \xrightarrow{n \to \infty} 0.
    \]
\end{thm}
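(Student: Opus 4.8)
The plan is to start from a continuous eigenfunction, transport the eigenvalue equation to every level of the directive sequence, and then combine a \emph{deep desubstitution} argument with the uniform continuity of the eigenfunction. By Proposition~\ref{prop:inf:bi-inf} I first reduce to the one-sided subshift, which makes the prefix/cylinder arguments clean; primitivity guarantees $(X_{\bm\sigma},S)$ is minimal, so the eigenfunction $f:X_{\bm\sigma}\to\R/\Z$ with $f\circ S=f+t$ is continuous, hence uniformly continuous on the compact space $X_{\bm\sigma}$. For each level $k$ I introduce the transfer function $g_k:=f\circ\sigma_{[0,k)}:X^{(k)}\to\R/\Z$, which is continuous, and the linear form $\phi_k:=t(1,\ldots,1)M_{[0,k)}$, so that $\phi_k(\ab(u))=t\abs{\sigma_{[0,k)}(u)}$. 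Pushing the eigenvalue equation through $\sigma_{[0,k)}$ gives, for every $y\in X^{(k)}$ and $m\geq 0$,
\[
    g_k(S^m y)-g_k(y)=\phi_k(\ab(y_{[0,m)}))\pmod 1.
\]

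The key step is a uniform mod-$1$ estimate obtained from uniform continuity. Fix $\varepsilon>0$ and let $N$ be a prefix length witnessing $\varepsilon$-continuity of $g_k$. Since $\bm\sigma$ is finitary and primitive, $\min_{b\in\A_n}\abs{\sigma_{[k,n)}(b)}\to\infty$, so for $n$ large this minimum exceeds $N$. Given a return word $w$ to a letter $b$ of $X^{(n)}$, realized as $y^{(n)}=w\,y''$ with $y^{(n)},y''\in X^{(n)}$ both starting with $b$, the two points $\sigma_{[k,n)}(y^{(n)})$ and $S^{\abs{\sigma_{[k,n)}(w)}}\sigma_{[k,n)}(y^{(n)})=\sigma_{[k,n)}(y'')$ both begin with $\sigma_{[k,n)}(b)$, hence agree on at least $N$ letters; the transfer identity then shows that $\phi_k M_{[k,n)}(\ab(w))$ equals a difference of $g_k$-values that is $\varepsilon$-small. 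I therefore obtain that $\phi_k M_{[k,n)}(\ab(w))\to 0$ modulo $\Z$, uniformly over return words $w$ of $X^{(n)}$. Note that $M_{[k,n)}(\ab(w))=\ab(\sigma_{[k,n)}(w))$ lies in the return space $R(X^{(k)})$, since images of return words are products of return words, exactly as used in the proof of Lemma~\ref{lem:cob:unity}.

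It remains to build the form $c$ and to upgrade this mod-$1$ smallness to genuine convergence in $\R$. The point is that adding to $\phi_k$ any linear form that is integral on return words leaves the $\R/\Z$-values on return words unchanged, so the above estimate constrains $\phi_k$ only through its class modulo such forms. I work inside the finite-dimensional space $R:=R(X^{(k)})$, on which the cocycle $M_{[k,n)}$ acts: the estimate says that $\phi_k$ restricted to $R$, read modulo the dual lattice of the return-word lattice, is contracted to $0$ by the cocycle. Using finitary primitivity (finitely many matrices with uniformly positive products) I upgrade this to a real statement and produce a \emph{fixed} linear form $c$, integral on the return words of $\L^{(k)}$, such that $\phi_k+c$ annihilates the non-contracting directions; realizing the integer corrections by an honest linear form is done via a Lemma~\ref{lem:rep}-type construction. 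For a suitable level $k$ this yields $(t(1,\ldots,1)M_{[0,k)}+c)M_{[k,n)}=(\phi_k+c)M_{[k,n)}\to 0$ in $\R$.

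The \textbf{main obstacle} is precisely this last upgrade: passing from the mod-$1$ smallness of $\phi_k M_{[k,n)}$ on return words to real convergence of $(\phi_k+c)M_{[k,n)}$ to $0$ on the whole pushed-forward basis, including the growing length component $t\abs{\sigma_{[0,n)}(b)}$ that $c$ must exactly cancel. This forces a careful choice of the level $k$, so that the relevant lattice and contraction data stabilize, together with control of the non-contracting directions of the cocycle on $R(X^{(k)})$, where the philosophy of Lemma~\ref{lem:cob:unity}—coboundaries living in the generalized eigenspaces for eigenvalues that are roots of unity or zero—indicates which directions can be absorbed integrally. Secondary care is needed because recognizability is not assumed, so I only use the existence of a prefix--suffix decomposition and the image structure of returns rather than uniqueness, and because the alphabets $\A_n$ vary with $n$, so every estimate must be uniform in $n$ and in the letters.
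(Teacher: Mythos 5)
Your first half is sound and matches the paper's opening moves: the reduction to the one-sided subshift, the transfer identity $g_k(S^m y)-g_k(y)=t(1,\ldots,1)M_{[0,k)}\ab(y_{[0,m)})$ mod $1$, and the uniform-continuity estimate showing that $t(1,\ldots,1)M_{[0,n)}\ab(w)\to 0$ mod $1$ uniformly over return words $w$ of $\L^{(n)}$ are all correct, and this is essentially the content of the paper's Lemma~\ref{lem:ab} restricted to return words. But there the proposal stops proving and starts asserting, and the two missing steps are precisely the heart of the theorem. First, your estimate lives only on the return space: the conclusion requires $(t(1,\ldots,1)M_{[0,k)}+c)M_{[k,n)}\to 0$ as a row vector, i.e.\ evaluated on \emph{every} letter $e_a$, $a\in\A_n$, and there is no canonical way to extend a bound on abelianized return words to a bound on letters. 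The paper bridges this with the nested-sequence machinery of Subsection~\ref{ss:nested}: there are finitely many nested sequences, each giving a limit word $w_{\bm a}$, hence a finite set $\F$ of eigenfunction values; choosing $\epsilon$ below the minimal gap of $\F$ makes the letter-indexed correction $c_n(a)=v_a-v_b$ (for $ab\in\L^{(n)}$, $v_a=f(w_{\alpha(a)}))$ \emph{well defined}, a coboundary of $X^{(n)}$, and compatible across levels ($c_{n+1}=c_n\circ\sigma_n$, Lemma~\ref{lem:cn}). This yields the letter-wise estimate $\abs{t h_n(a)+v_a-v_b}\le\epsilon/2$ and hence $(t(1,\ldots,1)M_{[0,n_0)}+c_{n_0})M_{[n_0,n)}\to 0$ mod $1$. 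Your sketch, by contrast, says only that you will ``produce a fixed linear form $c$'' using finitary primitivity; no mechanism is given, and a Lemma~\ref{lem:rep}-type construction does not supply one (that lemma merely lifts a mod-$1$ coboundary to a real representative; the existence of a consistent, level-independent coboundary is the thing to be proved).

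Second, the upgrade from mod-$1$ smallness to genuine convergence in $\R$ is not a soft consequence of ``annihilating the non-contracting directions'' --- a phrase which is not even well defined for a general finitary cocycle and is not the statement needed. The paper does it via Lemma~\ref{lem:mats}: writing $vM_{[0,n)}=\epsilon_n+w_n$ with $\epsilon_n\in(-1/2,1/2]^{d_n}$ and $w_n$ integral, one observes $w_nM_n-w_{n+1}=\epsilon_{n+1}-\epsilon_nM_n\to 0$; since these are integer vectors and the matrices $M_n$ have \emph{bounded integer entries} (this is exactly where the finitary hypothesis enters), the discrepancy eventually vanishes, so $w_n=w_kM_{[k,n)}$ for $n\ge k$, and subtracting $w_k$ at level $k$ gives real convergence. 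Your proposal correctly identifies this upgrade as the main obstacle but offers no argument for it, and it attributes the role of finitariness to ``uniformly positive products'' rather than to the boundedness of integer coefficients that the pigeonhole argument actually uses. So the attempt reproduces the paper's easy first estimate but is missing both of the paper's key ideas; as written it is a genuine gap, not a complete alternative proof.
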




Note that such a map $c$ give a coboundary of $X_{S^k \bm \sigma}$.
If we remove the finitary hypothesis and replace it by $\liminf_{n \to \infty} \abs{\A_n} < \infty$, then we still have the conclusion but with a convergence modulo $1$.

If the speed of convergence is fast enough, we have the reciprocal,
which is a generalization of Theorem~{4.8} and Theorem~{4.12} in~\cite{BBY22}.

\begin{thm} \label{thm:suf}
    Let $\bm \sigma$ be a finitary, primitive and recognizable directive sequence.
    If there exists $k \in \N$ and a linear form $c : \R^{\A_{k}} \to \R$ such that $c(\ab w) \in \Z$ for every return word $w$ of $\L^{k}$ and
    such that
    \[
        \sum_{n \geq k} (t (1, ..., 1) M_{[0,k)} + c)M_{[k, n)}
    \]
    converges, then $t$ is a continuous eigenvalue of $(X_{\bm \sigma},S)$.
\end{thm}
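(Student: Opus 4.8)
The plan is to produce an explicit continuous eigenfunction by summing the given convergent series along orbits, much as in the proof of Proposition~\ref{prop:gs} and Corollary~\ref{cor:Sadic}. Write $\pi = t(1,\ldots,1)M_{[0,k)} + c$, a linear form on $\R^{\A_k}$, and set $T_n = \pi M_{[k,n)}$ for $n \geq k$. By hypothesis $\sum_{n \geq k} T_n$ converges, so in particular $T_n \to 0$ and the tail sums $\sum_{m \geq n} T_m$ are well-defined and tend to $0$. The idea is that these tail sums give the ``correction terms'' needed to define the eigenfunction consistently via the prefix decomposition of points of $X^{(k)}$, and then to transport the eigenfunction back to $X_{\bm\sigma}$ through the recognizable factor map $\sigma_{[0,k)}$.

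First I would work at level $k$. Using recognizability of $\bm\sigma$, every $x \in X^{(k)}$ has a unique prefix-suffix decomposition, so the prefix maps $p_n : X^{(k)} \to \A_n^*$ are well-defined. For $x \in X^{(k)}$ I define a candidate eigenfunction $g$ by a formula of the shape
\[
    g(x) = c(x) - \sum_{n \geq k} \pi M_{[k,n)}\, \ab\big(p_n(x)\big) \pmod 1,
\]
where $c(x)$ is the extension of the coboundary $c$ to $X^{(k)}$ provided by the lemma on extending coboundaries to infinite words. I would verify that each term is controlled: since $c(\ab w)\in\Z$ for every return word $w$ of $\L^k$, $c$ is genuinely a coboundary of $X^{(k)}$, and $\ab(p_n(x))$ is a prefix of $\sigma_n(\text{a letter})$, hence lies in a bounded set; combined with the absolute convergence of $\sum_n T_n$ (after bounding $\ab(p_n(x))$ uniformly using finitariness) the series converges absolutely, so $g$ is well-defined and continuous in the product topology. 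The key computation is then to check the eigenfunction equation $g(Sx) = g(x) - t$ on $X^{(k)}$ at level $0$, i.e.\ that applying the shift changes $g$ by the prescribed additive constant; this reduces to how the prefix decomposition of $Sx$ differs from that of $x$ (incrementing along the bottom path, possibly with carries up the diagram), and the telescoping of the tail sums $T_n$ is exactly engineered so that the boundary contributions cancel against the coboundary term $c$ and leave precisely the constant $\pi$ at the base, which equals $t$ modulo $\Z$ because $c(\ab w)\in\Z$ and $t(1,\ldots,1)M_{[0,k)}$ is the intended eigenvalue contribution.

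Next I would descend from level $k$ to level $0$. The eigenvalue we want is $t$ for $(X_{\bm\sigma},S)=(X^{(0)},S)$, whereas the naturally constructed function lives on $X^{(k)}$ and is associated with the eigenvalue-data pushed through $M_{[0,k)}$. Because $\sigma_{[0,k)}$ is recognizable in $X^{(k)}$, the factor map $\sigma_{[0,k)} : X^{(k)} \to X_{\bm\sigma}$ is an almost-everywhere bijection onto a shift, and the chosen normalization $\pi = t(1,\ldots,1)M_{[0,k)}+c$ (with the factor $M_{[0,k)}$) is precisely what makes the level-$k$ eigenfunction descend to a continuous eigenfunction for $t$ on $X_{\bm\sigma}$ under rescaling of return times by the column sums of $M_{[0,k)}$. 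I would make this explicit by relating $f$ on $X_{\bm\sigma}$ to $g$ on $X^{(k)}$ via $f(\sigma_{[0,k)}(x)) = g(x)$ on the image of the zeroth cylinder and extending along orbits, checking continuity using recognizability at each of the finitely many levels $0,\ldots,k-1$.

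The main obstacle I expect is the bookkeeping in the telescoping/cancellation step proving $g(Sx)=g(x)-t$: tracking how a single shift on $X^{(k)}$ propagates through the prefix-suffix diagram (the ``carry'' when a prefix becomes empty and one moves up a level), and confirming that the resulting finite collection of boundary terms telescopes against $\sum_{n}T_n$ and the coboundary contributions to yield exactly the constant $t$ modulo $\Z$. This is where the integrality condition $c(\ab w)\in\Z$ on return words and the convergence of the series are both used in an essential way, and it is the heart of the argument; the continuity and the descent from level $k$ to level $0$ are comparatively routine given Mossé-type recognizability and the finitary hypothesis.
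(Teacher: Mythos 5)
Your overall architecture is the paper's own: the paper defines the eigenfunction in one formula on $X_{\bm \sigma}$,
\[
    f \;=\; \sum_{n=0}^{k-1} t (1 \ldots 1) M_{[0,n)} \ab p_n \;+\; \sum_{n \geq k} \bigl( t (1 \ldots 1) M_{[0,k)} + c \bigr) M_{[k,n)} \ab p_n \;+\; c (\theta_k),
\]
where $\theta_k$ is the $k$-fold desubstitution map; your level-$k$ function $g$ is exactly the last two summands composed with $\theta_k$, and your ``descent'' step is the finite sum over levels $0,\dots,k-1$, which computes $jt$ for the position $j$ inside the current level-$k$ block. Continuity via normal convergence of the series (using finitariness to bound $\ab p_n$) is handled identically in both.

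However, the step you yourself flag as the heart of the argument is stated incorrectly, and as stated it would fail. On $X^{(k)}$ the identity $g(Sx) = g(x) - t$ is false: one shift at level $k$ corresponds to $h_k(x_0) = (1 \ldots 1) M_{[0,k)} e_{x_0}$ shifts at level $0$, so (up to your sign convention, and after the coboundary extension absorbs $c(x_0)$) the correct relation is the \emph{cocycle} equation $g(Sx) - g(x) = t\, h_k(x_0) \bmod 1$, not a constant. Likewise your claim that the boundary terms ``leave precisely the constant $\pi$ at the base, which equals $t$ modulo $\Z$'' is wrong: $\pi(e_a) = t\, h_k(a) + c(e_a)$, and neither $t\, h_k(a) \equiv t \bmod \Z$ nor $c(e_a) \in \Z$ holds in general --- the integrality hypothesis on $c$ concerns only return words. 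Your descent paragraph gestures at the right normalization (``rescaling of return times''), but with the wrong target identity the carry bookkeeping you defer cannot close. It is also worth noting that the paper sidesteps the odometer/carry analysis entirely: by primitivity the subshift is minimal, so it suffices to verify $f \circ S = f + t$ on the single orbit of a point $x$ with $p_n(x) = \epsilon$ for all $n$, where the prefix decomposition of $S^m x$ telescopes exactly via $\sigma_{[0,N)}(p_N(S^m x)) \cdots p_0(S^m x)\, S^m x = x$, giving $f(S^m x) = mt + c(\theta_k(x))$ with no carries to track. Correcting your level-$k$ equation to the cocycle form and replacing the pointwise carry verification by this single-orbit argument would repair the proof.
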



\begin{rem} \label{rem:unimod}
    In Theorem~\ref{thm:suf}, if the substitution $\sigma_{[0,k)}$ is unimodular,
    then the conclusion of the theorem holds without the recognizability hypothesis.
    Indeed, $c$ can be decomposed as the sum of a real coboundary $\tilde{c}$ and an integer row vector $w$ by Lemma~\ref{lem:rep}, and $\tilde{c} M_{[0,k)^{-1}}(\L_X)$ is bounded and $w M_{[0,k)}^{-1}$ is an integer row vector, thus hypotheses of Corollary~\ref{cor:Sadic} are satisfied with $\pi = t(1...1) + w M_{[0,k)}^{-1}$.
\end{rem}

\begin{proof}[Proof of Theorem~\ref{thm:suf}]
    Since $\bm \sigma$ is recognizable, the maps $p_n : X_{\bm \sigma} \to \L^{(n)}$ are well-defined and continuous.
    Then, we define a continuous map $f : X_{\bm \sigma} \to \R/\Z$ by
    \[
        f = \sum_{n=0}^{k-1} t (1 ... 1) M_{[0,n)} \ab p_n + \sum_{n \geq k} ( t (1 ... 1) M_{[0,k)} + c) \ab \sigma_{[k,n)} p_n + c (\theta_k),
    \]
    where the last occurrence of $c$ is a map $c : X_{S^k \bm \sigma} \to \R/\Z$ such that $c(Sx) + c(x_0) = c(x)$,
    and $\theta_k : X_{\bm \sigma} \to X_{S^k \bm \sigma}$ is the map such that $\sigma_{[0,k)} (\theta_k(x)) = \sigma_{[0,k-1)}(p_{k-1}(x)) ... \sigma_{0}(p_1(x)) p_0(x) x$.
    In other words, $\theta_k(x)$ is the word obtained after $k$ steps of desubstitution from $x$.
    The map $f$ is continuous since the series is normally convergent.
    Let us show that $f$ is an eigenfunction for the eigenvalue $t$.
    Let $x \in X_{\bm \sigma}$ be such that $p_n(x) = \epsilon$ for every $n \in \N$.
    Since $\bm \sigma$ is primitive, $(X_{\bm \sigma}, S)$ is minimal, and
    it is enough to prove the equality $f \circ S = f + t$ on the orbit of $x$.
    Let $m \in \N$. There exists $N \geq k$ such that $p_n(S^m x) = \epsilon$ for every $n \geq N$,
    thus 
    \begin{eqnarray*}
        f(S^m x) &=& \sum_{n=0}^k t (1 ... 1) \ab \sigma_{[0,n)} \ab p_n(S^m x) + \\
                & & \quad \sum_{n = k}^N ( t (1 ... 1) \ab(\sigma_{[0,k)}) + c) \ab \sigma_{[k,n)} p_n(S^m x) + c \theta_{k} (S^m x) \\
                 &=& t (1 ... 1) \ab(\sigma_{[0,N)}(p_N)...\sigma_0(p_1)p_0) + \\
                & &  \quad c \ab(\sigma_{[k,N)}(p_N)...\sigma_k(p_{k+1})p_k) + c\theta_{k} (S^m x) \\
                 &=& m t + c (\theta_k (x)) = f(x) + mt. 
    \end{eqnarray*}
    This before last equality comes from the relations
    \[
        \sigma_{[0,N)}(p_N(S^m x)) ... \sigma_0(p_1(S^m x)) p_0 (S^m x) S^m x = x,
    \]
    \[
        \sigma_{[k,N)}(p_N(S^m x)) ... \sigma_k(p_{k+1}(S^m x)) p_k (S^m x) \theta_k(S^m x) = \theta_k(x).
    \]
\end{proof}

To prove Theorem~\ref{thm:nec}, we need to introduce some important notions such as nested sequences.

\subsection{Nested sequences} \label{ss:nested}

In this subsection, we consider subshifts over infinite words.
Let $\bm \sigma$ be a directive sequence over alphabets $(\A_n)$.
In this subsection we consider only first letters in images of letters,
so we can assume that substitutions of $\bm \sigma$ have constant lengths $1$.
A \defi{nested sequence} is a sequence $(a_n)_{n \in \N} \in \Pi_{n \in \N} \A_n$ of letters such that
$\forall n \in \N,\ \sigma_n(a_{n+1}) = a_n$.
We denote by $\NN_{\bm \sigma}$ (or just $\NN$) the set of nested sequences. 

\begin{lem} \label{lem:N:non-empty}
    If alphabets $\A_n$ are non-empty, then $\NN \neq \emptyset$.
\end{lem}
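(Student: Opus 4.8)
The plan is to recognize $\NN$ as the inverse limit of the nonempty finite sets $\A_n$ along the letter maps $\sigma_n \colon \A_{n+1} \to \A_n$ (recall that in this subsection substitutions have length $1$, so each $\sigma_n$ restricts to an honest map on letters, and the condition $\sigma_n(a_{n+1}) = a_n$ is an equality of letters in $\A_n$), and to prove this inverse limit is nonempty by a compactness argument.

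First I would equip the product $\prod_{n \in \N} \A_n$ with the product topology, each $\A_n$ being discrete and finite. Since all $\A_n$ are nonempty, this product is a nonempty compact space by Tychonoff's theorem. For each $n$, set $F_n = \{(a_k)_k \in \prod_{k} \A_k \mid \sigma_n(a_{n+1}) = a_n\}$. Each $F_n$ depends only on the two coordinates $a_n$ and $a_{n+1}$, hence is clopen, in particular closed, and by definition $\NN = \bigcap_{n \in \N} F_n$.

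Then I would invoke the finite intersection property: in a compact space, a family of closed sets has nonempty intersection as soon as every finite subfamily does. So it suffices to show $F_0 \cap \cdots \cap F_N \neq \emptyset$ for each $N$. This is the only concrete step, and it is easy precisely because the maps $\sigma_n$ go in the convenient direction. Pick any letter $a_{N+1} \in \A_{N+1}$ (possible since $\A_{N+1} \neq \emptyset$), define $a_n = \sigma_n(a_{n+1})$ for $n = N, N-1, \ldots, 0$ by descending recursion, and complete the sequence arbitrarily above index $N+1$. By construction $\sigma_n(a_{n+1}) = a_n$ for every $n \leq N$, so this sequence lies in $F_0 \cap \cdots \cap F_N$.

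There is no genuine obstacle here: this is the standard fact that an inverse limit of nonempty finite sets is nonempty, equivalently an instance of König's lemma applied to the finitely branching tree of finite nested chains. The only point requiring care is the \emph{direction} of the maps, namely that building a finite nested chain amounts to applying the $\sigma_n$ downward from an arbitrary top letter, which always succeeds, whereas constructing one upward could fail for lack of preimages; compactness is exactly what upgrades arbitrarily long finite chains into an infinite one. A choice-light alternative would be to set $B_n = \bigcap_{m \geq n} \sigma_{[n,m)}(\A_m)$, observe that each $B_n$ is nonempty as a decreasing intersection of nonempty finite subsets of $\A_n$, check that $\sigma_n$ maps $B_{n+1}$ onto $B_n$, and then build the nested sequence by descending choice; but the compactness argument is cleaner.
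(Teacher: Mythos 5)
Your proof is correct, but your main argument takes a different route from the paper's. The paper proves the lemma elementarily: it considers the eventual images $\bigcap_{k \geq n} \sigma_{[n,k)}(\A_k)$, notes that the decreasing sequence of nonempty finite subsets $(\sigma_{[0,n)}(\A_n))_n$ of $\A_0$ has nonempty intersection, and then builds the nested sequence by induction, at each step choosing a preimage $a_{n+1}$ of $a_n$ that again lies in the eventual image — which is exactly the ``choice-light alternative'' you sketch in your final sentence (your surjectivity check $\sigma_n(B_{n+1}) = B_n$ hides a small pigeonhole argument using finiteness of $\A_{n+1}$ and the decreasingness of the sets $\sigma_{[n+1,m)}(\A_m)$, the same point the paper's inductive step relies on). Your primary argument instead packages $\NN$ as an inverse limit and invokes compactness of $\prod_n \A_n$ plus the finite intersection property, with the finite stages handled by pushing an arbitrary top letter downward through the $\sigma_n$. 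Both are standard proofs that an inverse limit of nonempty finite sets is nonempty; the topological version is arguably cleaner and generalizes verbatim (e.g.\ to compact fibers), while the paper's version is self-contained, avoids Tychonoff, and makes explicit the concrete structure (the eventual images) that is reused in the surrounding subsection, notably in the proof that the sets $\B_n$ are eventually empty, which applies this very lemma to the induced maps $\tau_n : \B_{n+1} \to \B_n$. Your remark about the direction of the maps — descending chains always exist, ascending ones may not — correctly identifies the one place where care is needed in either approach.
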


\begin{proof}
    We assume that for all $n \in \N$, $\sigma_n$ has length one, up to replace $\sigma_n(a)$ by its first letter for every $a \in \A_n$.
    The intersection $\bigcap_{n \in \N} \sigma_{[0,n)}(\A_n)$ is non-empty since $(\sigma_{[0,n)}(\A_n))$ is a decreasing sequence of non-empty subsets of $\A_0$. We then construct a nested sequence $(a_n)$ by induction: if $a_n \in \bigcap_{k \geq n} \sigma_{[n,k)}(\A_k)$, then we can find $a_{n+1} \in \bigcap_{k \geq n+1} \sigma_{[n+1,k)}(\A_k)$ such that $\sigma_n(a_{n+1}) = a_n$.
\end{proof}

The following tells us that nested sequences are almost pairwise disjoint. 
\begin{lem} \label{lem:N:disj}
    If $(a_n)$ and $(b_n)$ are two different nested sequences,
    then
    \[
        \exists n_0 \in \N,\ \forall n \geq n_0,\ a_n \neq b_n.
    \]
\end{lem}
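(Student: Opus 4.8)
The plan is to exploit the deterministic nature of the defining relation for nested sequences. Recall that in this subsection we may assume each $\sigma_n$ has constant length $1$, so the condition $\sigma_n(a_{n+1}) = a_n$ means that the letter $a_n$ is \emph{completely determined} by $a_{n+1}$. The whole argument hinges on reading this relation in the contrapositive direction.

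First I would observe that agreement propagates downward. If $a_{n+1} = b_{n+1}$ for some $n$, then applying $\sigma_n$ to both sides gives
\[
    a_n = \sigma_n(a_{n+1}) = \sigma_n(b_{n+1}) = b_n,
\]
so the two sequences also agree at level $n$. Taking the contrapositive, disagreement propagates upward:
\[
    a_n \neq b_n \ \Longrightarrow\ a_{n+1} \neq b_{n+1}.
\]

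Next, since $(a_n)$ and $(b_n)$ are different by hypothesis, there exists at least one index $m \in \N$ with $a_m \neq b_m$. Applying the upward-propagation implication and arguing by induction on $n \geq m$, I obtain $a_n \neq b_n$ for every $n \geq m$. Setting $n_0 = m$ then yields exactly the desired conclusion.

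I do not expect any genuine obstacle here: the statement is essentially immediate once one notices that the constant-length-one relation $\sigma_n(a_{n+1}) = a_n$ makes the map sending level $n+1$ to level $n$ a function, so that two nested sequences can only merge (never split) as the level decreases. The single point requiring care is simply the \emph{direction} of the propagation — agreement persists as one descends, equivalently disagreement persists as one ascends — after which the induction from the first index of disagreement is routine.
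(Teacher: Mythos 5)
Your proof is correct and rests on exactly the same observation as the paper's: since $\sigma_n(a_{n+1}) = a_n$, agreement at a level propagates downward, equivalently disagreement propagates upward. The paper merely packages this as a proof by contradiction (agreement at arbitrarily large indices would force the sequences to coincide), while you run the same implication directly by induction from the first index of disagreement; the two arguments are the same in substance.
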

\begin{proof}
    If $a_{n_0} = b_{n_0}$, then for every $n \leq n_0$, $a_n = b_n$. Thus, if the conclusion was not true, then the two nested sequences would be equal.
\end{proof}

We deduce the following.

\begin{lem} \label{lem:N:finite}
    \[
        \abs{\NN} \leq \liminf_{n \to \infty} \abs{\A_n}.
    \]
\end{lem}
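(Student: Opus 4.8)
The plan is to deduce this bound directly from the almost-disjointness of nested sequences established in Lemma~\ref{lem:N:disj}. The key observation is that any finite collection of pairwise distinct nested sequences must eventually use pairwise distinct letters at every level, which forces the alphabets $\A_n$ to be large enough from some point on. Since the whole statement is already essentially packaged in the previous lemma, the argument is short.

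First I would fix an arbitrary finite subset $\{(a^1_n)_n, \dots, (a^k_n)_n\}$ of $\NN$ consisting of $k$ pairwise distinct nested sequences. For each pair $i \neq j$, Lemma~\ref{lem:N:disj} provides an index $n_{ij}$ beyond which $a^i_n \neq a^j_n$. Since there are only finitely many such pairs, I can set $N = \max_{i \neq j} n_{ij}$, so that for every $n \geq N$ the letters $a^1_n, \dots, a^k_n$ are pairwise distinct elements of $\A_n$. This immediately yields $k \leq \abs{\A_n}$ for all $n \geq N$, and hence $k \leq \inf_{n \geq N} \abs{\A_n} \leq \liminf_{n \to \infty} \abs{\A_n}$.

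Since this bound holds for every finite subset of $\NN$, I would conclude $\abs{\NN} \leq \liminf_{n \to \infty} \abs{\A_n}$ by the usual dichotomy: if the right-hand side is infinite the inequality is trivial, while if it is finite then no finite subset of $\NN$ can have more than that many elements, so $\NN$ itself is finite with at most that cardinality. The only mild point of care is the passage from the pairwise thresholds $n_{ij}$ to a single uniform threshold $N$, which is harmless precisely because the chosen subset is finite; I do not expect any genuine obstacle here.
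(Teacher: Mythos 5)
Your proof is correct and is exactly the deduction the paper intends: the lemma appears with no written proof, only the phrase ``We deduce the following'' after Lemma~\ref{lem:N:disj}, and your argument (uniformize the pairwise thresholds over a finite subfamily, get pairwise distinct letters in $\A_n$ for all large $n$, then pass to the liminf and to arbitrary finite subsets of $\NN$) is the standard way to fill in that deduction.
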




We assume in the following that alphabets $\A_n$ are disjoints.
For each $k \in \N$, let $\B_k = \{a \in \A_k \mid \forall (a_n) \in \NN,\ \sigma_{[0,k)}(a) \not\in [a_0] \}$.
This set corresponds to letters that never reach a nested sequence.
We have the following.

\begin{lem}
    \[
        \exists n_0 \in \N,\ \forall n \geq n_0,\ \B_n = \emptyset.
    \]
\end{lem}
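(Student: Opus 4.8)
The plan is to reformulate the condition $\B_n = \emptyset$ as a statement about a decreasing sequence of finite subsets of $\A_0$ and then invoke the trivial fact that such a sequence stabilizes. Recall that throughout this subsection substitutions have constant length $1$, so $\sigma_{[0,k)}(a)$ is a single letter of $\A_0$ for every $a \in \A_k$, and $\sigma_{[0,k)}(a) \in [a_0]$ just means $\sigma_{[0,k)}(a) = a_0$. I set $\A_0^\infty = \bigcap_{k \in \N} \sigma_{[0,k)}(\A_k) \subseteq \A_0$.

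First I would show that $\A_0^\infty$ is exactly the set of first letters of nested sequences, i.e.\ that $a_0 \in \A_0^\infty$ if and only if there is some $(a_n) \in \NN$ with $a_0$ as its first term. The direct implication is immediate, since a nested sequence satisfies $a_0 = \sigma_{[0,k)}(a_k) \in \sigma_{[0,k)}(\A_k)$ for every $k$. For the converse I would reuse the inductive construction from the proof of Lemma~\ref{lem:N:non-empty}, keeping track of the prescribed first letter: starting from $a_0 \in \A_0^\infty$ and a partial nested sequence $a_0, \dots, a_n$ with $a_n \in \bigcap_{k \geq n} \sigma_{[n,k)}(\A_k)$, the sets $\{a \in \A_{n+1} \mid \sigma_n(a) = a_n\} \cap \sigma_{[n+1,k)}(\A_k)$ form, for $k \geq n+1$, a decreasing sequence of non-empty subsets of the finite alphabet $\A_{n+1}$, hence have non-empty intersection, which lets me extend the partial sequence one step further.

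With this identification, a letter $a \in \A_n$ lies in $\B_n$ if and only if $\sigma_{[0,n)}(a) \notin \A_0^\infty$, that is, $\B_n = \sigma_{[0,n)}^{-1}(\A_0 \setminus \A_0^\infty)$, so that $\B_n = \emptyset$ precisely when $\sigma_{[0,n)}(\A_n) \subseteq \A_0^\infty$. Finally I would note that $(\sigma_{[0,n)}(\A_n))_n$ is a decreasing sequence of subsets of the finite set $\A_0$ — decreasing because $\sigma_{[0,n+1)}(\A_{n+1}) = \sigma_{[0,n)}(\sigma_n(\A_{n+1})) \subseteq \sigma_{[0,n)}(\A_n)$ — and therefore stabilizes at its intersection $\A_0^\infty$. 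Thus there is $n_0 \in \N$ with $\sigma_{[0,n)}(\A_n) = \A_0^\infty$ for every $n \geq n_0$, giving $\B_n = \emptyset$ for all such $n$.

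The only genuinely delicate point is the characterization of $\A_0^\infty$ as the set of first letters of nested sequences, and specifically its harder inclusion: this is the compactness (König's lemma) argument that realizes each element of the intersection by an actual nested sequence, where finiteness of the alphabets is essential. Everything else reduces to the elementary observation that a decreasing sequence of subsets of a finite set stabilizes.
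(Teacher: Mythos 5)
Your proof is correct, but it is organized differently from the paper's. The paper applies Lemma~\ref{lem:N:non-empty} as a black box to the auxiliary directive sequence $\tau_n : \B_{n+1} \to \B_n$ of ``bad'' letters: a nested sequence of $(\tau_n)$ would itself be a nested sequence of $\bm\sigma$ whose first letter is reached by it, contradicting the definition of the $\B_n$; hence some $\B_{n_0}$ is empty, and emptiness then propagates to all $n \geq n_0$ via the extra observation $\tau_{[n_0,n)}(\B_n) \subseteq \B_{n_0}$. You instead prove a strictly stronger statement — the eventual image $\A_0^\infty = \bigcap_k \sigma_{[0,k)}(\A_k)$ is \emph{exactly} the set of first letters of nested sequences — and then the lemma falls out of the stabilization of the decreasing sequence $(\sigma_{[0,n)}(\A_n))_n$ in the finite set $\A_0$, with forward propagation automatic by monotonicity. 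The compactness core is the same in both arguments: your prescribed-first-letter construction is precisely the induction already written in the paper's proof of Lemma~\ref{lem:N:non-empty} (whose inductive step in fact starts from an arbitrary element of the intersection), so you are making explicit something the paper leaves implicit rather than introducing a new mechanism. What each route buys: the paper's version is shorter given the lemma as a black box and never needs the exact characterization; yours yields more usable information — for $n \geq n_0$ one has $\sigma_{[0,n)}(\A_n) = \A_0^\infty$ and every letter of the eventual image is realized by a nested sequence — which feeds directly into the construction of the map $\alpha$ in Lemma~\ref{lem:alpha}, and it avoids the argument by contradiction. You also correctly flag the one delicate point (the converse inclusion, where finiteness of the alphabets is essential) and correctly work in the constant-length-$1$ reduction adopted in this subsection.
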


\begin{proof}
    If $a \in \B_{n+1}$, then the first letter of $\sigma_n(a)$ is in $\B_n$.
    Thus, we can define $\tau_n : \B_{n+1} \to \B_n$ such that $\tau_n(a)$ is the first letter of $\sigma_n(a)$.
    By definition of the sets $\B_n$, $(\tau_n)$ has no nested sequence.
    Thus, by Lemma~\ref{lem:N:non-empty}, $\exists n_0 \in \N,\ \B_{n_0} = \emptyset$.
    Then, for every $n \geq n_0$, $\B_n = \emptyset$ since $\tau_{[n_0,n)}(\B_{n}) \subseteq \B_{n_0}$.
\end{proof}

We deduce the following.

\begin{lem} \label{lem:alpha}
    There exists $n_0 \in \N$ and a map $\alpha : \bigcup_{n \geq n_0} \A_n \to \NN$
    such that $\forall n \geq n_0,\ \forall a \in \A_n,\ \sigma_{[0,n)}(a) \in [\alpha(a)_0]$.
\end{lem}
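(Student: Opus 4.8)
The plan is to read the conclusion off directly from the previous lemma, since the map $\alpha$ is essentially a choice function witnessing that the sets $\B_n$ are eventually empty.

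First I would fix the integer $n_0 \in \N$ provided by the previous lemma, so that $\B_n = \emptyset$ for every $n \geq n_0$. Because the alphabets $\A_n$ are assumed pairwise disjoint, the set $\bigcup_{n \geq n_0} \A_n$ is a disjoint union and each of its elements belongs to a unique $\A_n$; hence it suffices to define $\alpha$ on each $\A_n$ separately for $n \geq n_0$, and no coherence between levels has to be arranged.

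Next I would simply unwind the definition of $\B_n$. By definition, $a \in \B_n$ means that $\sigma_{[0,n)}(a) \notin [b_0]$ for every nested sequence $(b_k) \in \NN$. Negating this, a letter $a \in \A_n$ lies in $\A_n \setminus \B_n$ precisely when there exists some $(b_k) \in \NN$ with $\sigma_{[0,n)}(a) \in [b_0]$. Since $\B_n = \emptyset$ for all $n \geq n_0$, every letter $a \in \A_n$ admits at least one such nested sequence. For each such $a$ I would select one, setting $\alpha(a)$ to be this sequence; when the alphabets are bounded this is a finite selection by Lemma~\ref{lem:N:finite}, and in general it is an application of the axiom of choice. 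The resulting map $\alpha : \bigcup_{n \geq n_0} \A_n \to \NN$ then satisfies $\sigma_{[0,n)}(a) \in [\alpha(a)_0]$ for every $n \geq n_0$ and $a \in \A_n$, which is exactly the claim.

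I do not expect any genuine obstacle here: the whole substance of the statement is carried by the previous lemma asserting $\B_n = \emptyset$ for large $n$, and the present lemma merely repackages the witnesses of that emptiness into a single function. The only point requiring a moment's care is that the statement imposes no compatibility between the values $\alpha(a)$ at different levels $n$, so the selections may be performed independently and there is no need for an inductive construction.
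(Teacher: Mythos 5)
Your proof is correct and matches the paper's intent exactly: the paper offers no separate argument for this lemma, presenting it as an immediate consequence (``we deduce the following'') of the preceding lemma that $\B_n = \emptyset$ for all $n \geq n_0$, which is precisely the unwinding you carry out. Your observations that only the coordinate $\alpha(a)_0$ matters and that no compatibility between levels is required are accurate, so the independent choice of a witnessing nested sequence for each letter is all that is needed.
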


\subsection{Proof of Theorem~\ref{thm:nec}}

Let $\bm \sigma$ be a primitive directive sequence with $\liminf_{n \to \infty} \abs{\A_n} < \infty$.
Thanks to Proposition~\ref{prop:inf:bi-inf}, we can work only with right-infinite subshifts as in previous subsection.

\begin{lem}
    For every nested sequence $\bm a = (a_n) \in \NN$, $(\sigma_{[0,n)}(a_n))$ is a nested sequence of words that converges to an infinite word $w_{\bm a}$.
\end{lem}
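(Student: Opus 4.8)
The plan is to show that the sequence of words $(\sigma_{[0,n)}(a_n))_{n\in\N}$ is nested, meaning each word is a prefix of the next, and then invoke completeness to obtain a limiting infinite word. First I would recall that a nested sequence $\bm a = (a_n)$ satisfies $\sigma_n(a_{n+1}) = a_n$ for every $n$, using the convention adopted in this subsection that substitutions have constant length $1$ (so that $a_n$ is literally the first letter of $\sigma_n(a_{n+1})$, and more generally $a_0$ is the first letter of $\sigma_{[0,n)}(a_n)$). The key algebraic observation is that since $\sigma_n(a_{n+1}) = a_n$ as \emph{letters}, applying $\sigma_{[0,n)}$ to both sides gives $\sigma_{[0,n)}(\sigma_n(a_{n+1})) = \sigma_{[0,n)}(a_n)$, that is, $\sigma_{[0,n+1)}(a_{n+1}) = \sigma_{[0,n)}(a_n)$ under the length-one reading.

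I should be careful here: the statement is about the \emph{actual} substitutions (not truncated to length one), where $\sigma_n(a_{n+1})$ is a word beginning with $a_n$. In that setting the correct claim is that $\sigma_{[0,n)}(a_n)$ is a \emph{prefix} of $\sigma_{[0,n+1)}(a_{n+1})$. To see this, write $\sigma_n(a_{n+1}) = a_n s_n$ for some suffix $s_n \in \A_n^*$ (this is exactly the prefix-suffix decomposition with empty prefix, reflecting that $a_n$ is the first letter). Applying the monoid morphism $\sigma_{[0,n)}$ gives
\[
    \sigma_{[0,n+1)}(a_{n+1}) = \sigma_{[0,n)}(\sigma_n(a_{n+1})) = \sigma_{[0,n)}(a_n)\,\sigma_{[0,n)}(s_n),
\]
so indeed $\sigma_{[0,n)}(a_n)$ is a prefix of $\sigma_{[0,n+1)}(a_{n+1})$. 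Thus $(\sigma_{[0,n)}(a_n))_n$ is an increasing (for the prefix order) sequence of finite words. Moreover the lengths $\abs{\sigma_{[0,n)}(a_n)} = \mathbf{1}\,M_{[0,n)}\,e_{a_n}$ tend to infinity by primitivity of $\bm\sigma$ (every letter eventually has image growing without bound), so the words are not eventually constant.

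Having a nested (prefix-increasing) sequence of finite words of unbounded length, I would conclude that it converges in $\A_0^\N$ with the product topology to a unique right-infinite word $w_{\bm a}$: for each index $i$, the $i$-th letter of $\sigma_{[0,n)}(a_n)$ stabilizes once $\abs{\sigma_{[0,n)}(a_n)} > i$, which happens for all large $n$, and defining $w_{\bm a}$ letter-by-letter by these stable values gives the limit. Finally I would verify $w_{\bm a}\in X^{(0)}_{\bm\sigma}$ by noting every prefix of $w_{\bm a}$ is a prefix of some $\sigma_{[0,n)}(a_n)$, hence a factor lying in $\L_{\bm\sigma}$.

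The only delicate point, and the one I expect to be the main obstacle, is reconciling the ``length-one'' normalization used in Subsection~\ref{ss:nested} with the genuine substitutions appearing in the statement: under the truncation convention the words $\sigma_{[0,n)}(a_n)$ would all be single letters and would not grow, so the convergence to an \emph{infinite} word is meaningful only for the untruncated substitutions. The clean resolution is the prefix factorization $\sigma_n(a_{n+1}) = a_n s_n$ above, which makes the nesting and the unbounded growth transparent without any appeal to the truncation; primitivity then supplies the growth of lengths, and everything else is routine.
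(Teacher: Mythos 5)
Your proof is correct: the paper states this lemma without proof, treating it as routine, and your argument supplies exactly the intended one — the nested-sequence property gives the factorization $\sigma_n(a_{n+1}) = a_n s_n$, hence $\sigma_{[0,n+1)}(a_{n+1}) = \sigma_{[0,n)}(a_n)\,\sigma_{[0,n)}(s_n)$, so the words are prefix-nested, and primitivity forces the lengths $(1\dots1)M_{[0,n)}e_{a_n}$ to tend to infinity, yielding convergence in the product topology. Your resolution of the length-one truncation convention of Subsection on nested sequences (reading $\sigma_n(a_{n+1}) = a_n$ as ``$a_n$ is the first letter of the untruncated image'') is precisely how the paper intends the lemma to be read, so there is nothing to add.
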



Let $f$ be a continuous eigenfunction associated with an eigenvalue $t \in \R/\Z$.
Let
\[
    \F = \left\{f(w_{\bm a}) \in \R/\Z \mid \bm a \in \NN \right\}.
\]
By Lemma~\ref{lem:N:finite}, the set $\F$ is finite.
Let $\epsilon > 0$ such that $\epsilon < \min \{ \abs{x - y} \mid x \neq y \in \F \}$.
Let $n_1 \in \N$ be such that
\[
    \forall \bm a \in \NN,\ \abs{ f \left( \left[ \sigma_{[0,n_1)}(a_{n_1}) \right] \right) -
                                f(w_{\bm a}) } \leq \epsilon/4.
\]
Then, let $n_0 \in \N$ and $\alpha$ be given by Lemma~\ref{lem:alpha} for the directive sequence $S^{n_1} \bm \sigma$.
We have
\begin{eqnarray} \label{eqn}
    \forall n \geq n_0,\ \forall x \in X_{\bm \sigma}^{(n)},\ \abs{ f(\sigma_{[0,n)}(x)) - v_{x_0} } \leq \epsilon/4,
\end{eqnarray}
where we denote $v_a = f(w_{\alpha(a)})$.

\begin{lem} \label{lem:ab}
    \[
        \forall n \geq n_0,\ \forall ab \in \L^{(n)},\ \abs{t h_n(a) + v_a - v_b } \leq \epsilon/2,
    \]
    where $h_n(a) = \abs{\sigma_{[0,n)}(a)} = (1 ... 1) M_{[0,n)} e_a$.
\end{lem}

\begin{proof}
    For every $n \geq n_0$, for every $ab \in \L_{\bm \sigma}^{(n)}$ and for every $u \in [ab]$, we have
    \[
        t h_n(a) + f(\sigma_{[0,n)}(u)) = f(S^{h_n(a)} \sigma_{[0,n)}(u)) = f(\sigma_{[0,n)}(S u)).
    \]
    We deduce the result by triangular inequality, using~(\ref{eqn}).
\end{proof}

We define $c_{n} : \R^\A \to \R/\Z$ by
$c_n(a) = v_a - v_b$ if $ab \in \L^{(n)}$.
It is well defined. Indeed, if $ab \in \L^{(n)}$ and $ac \in \L^{(n)}$ then by Lemma~\ref{lem:ab} and by triangular inequality,
we have $\abs{v_c - v_b} \leq \epsilon < \min \{ \abs{x - y} \mid x \neq y \in \F \}$, thus $v_b = v_c$.
And if $w$ is a return word of $\L_n$, then clearly $c_n(w) = 0$ since we have $wa \in \L^{(n)} \Rightarrow c_n(w) = v_{w_0} - v_a$. 
Thus, $c_n$ if a coboundary of $X_{\bm \sigma}^{(n)}$.

\begin{lem} \label{lem:cn}
    For every $n \geq n_0$, we have $c_{n+1} = c_n \circ \sigma_n$.
\end{lem}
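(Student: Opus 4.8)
The plan is to prove the identity letter by letter: since both $c_{n+1}$ and $c_n \circ \sigma_n$ are morphisms $\A_{n+1}^* \to \R/\Z$, it suffices to check that $c_{n+1}(a) = c_n(\sigma_n(a))$ for every $a \in \A_{n+1}$. So I would fix such an $a$ and choose $b \in \A_{n+1}$ with $ab \in \L^{(n+1)}$ (one exists by primitivity), so that $c_{n+1}(a) = v_a - v_b$ by definition of $c_{n+1}$.

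First I would compute the right-hand side by telescoping. Write $\sigma_n(a) = u_1 \cdots u_k$ with $u_i \in \A_n$ (here $k \geq 1$ since $\sigma_n$ is non-erasing), and set $a' = u_1 = (\sigma_n(a))_0$ and $b' = (\sigma_n(b))_0$. Because $\sigma_n(ab) = \sigma_n(a)\sigma_n(b) \in \L^{(n)}$, each consecutive pair $u_i u_{i+1}$ as well as the pair $u_k b'$ lies in $\L^{(n)}$, so $c_n(u_i) = v_{u_i} - v_{u_{i+1}}$ for $i < k$ and $c_n(u_k) = v_{u_k} - v_{b'}$. Summing, the telescoping collapses to $c_n(\sigma_n(a)) = v_{a'} - v_{b'}$.

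The heart of the matter is the claim that $v_a = v_{a'}$ (and likewise $v_b = v_{b'}$), which closes the gap between $v_a - v_b$ and $v_{a'} - v_{b'}$. To prove it I would apply the key inequality~(\ref{eqn}) at two consecutive levels. Take any $x \in X_{\bm \sigma}^{(n+1)}$ with $x_0 = a$ and put $y = \sigma_n(x) \in X_{\bm \sigma}^{(n)}$, whose first letter is $y_0 = a'$; this membership holds because $\sigma_n$ maps the language of $S^{n+1} \bm \sigma$ into that of $S^n \bm \sigma$. Since $\sigma_{[0,n)}(y) = \sigma_{[0,n+1)}(x)$, inequality~(\ref{eqn}) at level $n+1$ gives $\abs{f(\sigma_{[0,n+1)}(x)) - v_a} \leq \epsilon/4$, while at level $n$ applied to $y$ it gives $\abs{f(\sigma_{[0,n+1)}(x)) - v_{a'}} \leq \epsilon/4$. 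The triangle inequality then yields $\abs{v_a - v_{a'}} \leq \epsilon/2$. As $v_a, v_{a'} \in \F$ and $\epsilon$ was chosen strictly below the minimal distance between distinct elements of $\F$, this forces $v_a = v_{a'}$.

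Combining the two steps, $c_{n+1}(a) = v_a - v_b = v_{a'} - v_{b'} = c_n(\sigma_n(a))$ for every $a \in \A_{n+1}$, which is exactly the stated equality of morphisms. The only delicate point is the level-shift argument establishing $v_a = v_{a'}$; the telescoping sum and the reduction to single letters are routine. I expect the main obstacle to be purely bookkeeping, namely to keep track that the hypothesis $n \geq n_0$ is used so that $\alpha$, and hence $v_a$, is defined simultaneously at levels $n$ and $n+1$.
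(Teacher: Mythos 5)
Your proof is correct and has the same skeleton as the paper's: fix $ab \in \L^{(n+1)}$, telescope $c_n$ along $\sigma_n(a)$ using $\sigma_n(a)b' \in \L^{(n)}$ to obtain $c_n(\sigma_n(a)) = v_{a'} - v_{b'}$ (the paper does this in one line, via the identity $wd \in \L^{(n)} \Rightarrow c_n(w) = v_{w_0} - v_d$ already recorded when checking that $c_n$ is well defined), then reduce everything to $v_a = v_{a'}$ and $v_b = v_{b'}$. The only place you genuinely diverge is that last claim: the paper dismisses it with ``by definition we have $v_a = v_{a_0}$'', which tacitly uses that the map $\alpha$ of Lemma~\ref{lem:alpha} can be chosen to depend only on the first letter of the iterated image --- since the first letter of $\sigma_{[0,n+1)}(a)$ equals that of $\sigma_{[0,n)}(a_0)$, a consistent choice forces $\alpha(a) = \alpha(a_0)$, making the equality literal. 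You instead prove it analytically, applying inequality~(\ref{eqn}) at the two consecutive levels $n+1$ and $n$ to $x$ and $\sigma_n(x)$ (using $\sigma_{[0,n)}(\sigma_n(x)) = \sigma_{[0,n+1)}(x)$) and then invoking the $\epsilon$-separation of the finite set $\F$. Your variant costs one extra use of the separation argument, but it buys robustness: it works for an arbitrary choice function $\alpha$, whereas the paper's ``by definition'' silently relies on a coherent construction of $\alpha$ among nested sequences sharing the same initial letter (the proof of Lemma~\ref{lem:alpha} being omitted). Both arguments are valid, and your bookkeeping of where $n \geq n_0$ is needed (so that $v$ is defined at both levels) is accurate.
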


\begin{proof}
    Let $ab \in \L^{(n+1)}$.
    We have $c_{n+1}(a) = v_a - v_b$.
    Let $a_0$ and $b_0 \in \L^{(n)}$ be respectively the first letters of $\sigma_n(a)$ and $\sigma_n(b)$.
    We have $\sigma_n(a)b_0 \in \L^{(n)}$, thus $c_n(\sigma_n(a)) = v_{a_0} - v_{b_0}$.
    Now, by definition we have $v_a = v_{a_0}$ and $v_b = v_{b_0}$ thus $c_{n+1}(a) = c_n (\sigma_n(a))$.
\end{proof}

Since $\epsilon > 0$ could be chosen arbitrarily small, Lemma~\ref{lem:ab} gives
\[
    t (1 ... 1)M_{[0,n)} + c_n \xrightarrow{n \to \infty} 0 \text{ mod } 1.
\]
Thus, by Lemma~\ref{lem:cn} we get $(t (1 ...1)M_{[0,n_0)} + c_{n_0}) M_{[n_0, n)} \xrightarrow{n \to \infty} 0 \text{ mod } 1$.

Now we need the following easy generalization of Lemma~1 in~\cite{Host86}.
It is where we use the finitary hypothesis.

\begin{lem} \label{lem:mats}
    Let $(M_n)_{n \in \N} \in M_{d_n, d_{n+1}}(\Z)$ with bounded coefficients and with $\forall n \in \N, d_n \geq 1$. 
    If $v \in \R^{d_0}$ is such that $v M_{[0,n)} \xrightarrow{n \to \infty} 0 \text{ mod } 1$.
    Then there exists $k \in \N$ and $w \in \Z^{d_k}$ such that 
    $(v M_{[0,k)} - w)M_{[k,n)} \xrightarrow{n \to \infty} 0$.
\end{lem}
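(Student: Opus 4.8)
The plan is to track the integer parts of the vectors $v_n := v M_{[0,n)} \in \R^{d_n}$ and to use integrality to upgrade convergence modulo $\Z$ into genuine convergence. Since $v_n \to 0$ mod $1$, the distance from $v_n$ to $\Z^{d_n}$ tends to $0$; so I would choose (for $n$ large) an integer vector $w_n \in \Z^{d_n}$ realizing this distance and write $v_n = w_n + \epsilon_n$ with $\norm{\epsilon_n}_\infty \le 1/2$ and $\norm{\epsilon_n} \to 0$. The defining relation $v_{n+1} = v_n M_n$ then reads $w_{n+1} + \epsilon_{n+1} = w_n M_n + \epsilon_n M_n$, hence
\[
    w_{n+1} - w_n M_n = \epsilon_n M_n - \epsilon_{n+1}.
\]
The left-hand side lies in $\Z^{d_{n+1}}$, since $w_n$ and $M_n$ are integral.

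The first real step is to show that the right-hand side tends to $0$. This is exactly where the finitary hypothesis enters: it forces the dimensions $d_n$ to remain bounded and the matrices $M_n$ to be drawn from a finite set, so the operator norms $\norm{M_n}$ are uniformly bounded, say by $C$. Then $\norm{\epsilon_n M_n} \le C \norm{\epsilon_n} \to 0$ and $\epsilon_{n+1} \to 0$, so the displayed right-hand side tends to $0$.

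Next, since $w_{n+1} - w_n M_n$ is an integer vector whose norm tends to $0$, there is $N \in \N$ with $\norm{w_{n+1} - w_n M_n} < 1$ for every $n \ge N$, which forces $w_{n+1} = w_n M_n$ for all $n \ge N$. Iterating from level $N$ gives, by telescoping, $w_N M_{[N,n)} = w_n$ for every $n \ge N$. I would then take $k = N$ and $w = w_N \in \Z^{d_N}$. Using $v M_{[0,N)} = v_N$ and $v_N M_{[N,n)} = v_n$, this yields
\[
    (v M_{[0,N)} - w) M_{[N,n)} = v_n - w_n = \epsilon_n \xrightarrow{n \to \infty} 0,
\]
which is the desired conclusion.

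The only genuinely delicate point — and the step I expect to be the main obstacle — is the uniform bound on $\norm{M_n}$. Without finitariness the dimensions $d_n$ may grow, so $\norm{M_n}$ could blow up despite the coefficients being bounded, and then $\epsilon_n M_n$ need not tend to $0$; in that regime the integer vector $w_{n+1} - w_n M_n$ no longer has to vanish, and one recovers only convergence modulo $1$, in agreement with the remark following Theorem~\ref{thm:nec}. Thus the finitary assumption is precisely what turns the integrality argument into a genuine (rather than mod-$1$) convergence statement.
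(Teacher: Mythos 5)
Your proof is correct and takes essentially the same route as the paper's: both decompose $v M_{[0,n)} = w_n + \epsilon_n$ with $w_n \in \Z^{d_n}$ and $\norm{\epsilon_n}_\infty \leq 1/2$, observe that the integer vectors $w_{n+1} - w_n M_n = \epsilon_n M_n - \epsilon_{n+1}$ tend to $0$ and hence vanish for all $n \geq k$, and telescope to conclude $(v M_{[0,k)} - w_k) M_{[k,n)} = \epsilon_n \to 0$. Your only addition is to make explicit the uniform bound on the operator norms $\norm{M_n}$ (bounded entries together with bounded dimensions), a point the paper's one-line assertion that the right-hand side tends to $0$ leaves implicit.
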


\begin{proof}
    For every $n \in \N$, let $\epsilon_n \in (-1/2,1/2]^{d_n}$ and $w_n \in \Z^{d_n}$ be such that $v M_{[0,n)} = \epsilon_n + w_n$.
    We have $w_n M_n - w_{n+1} = \epsilon_{n+1} - \epsilon_n M_n \xrightarrow{n \to \infty} 0$,
    and $w_n M_n - w_{n+1}$ has integer coefficients.
    Thus there exists $k \in \N$ such that $\forall n \geq k,\ w_n = w_{n+1} M_n$.
    Then, $(v M_{[0,k)} - w_k) M_{[k,n)} \xrightarrow{n \to \infty} 0$.
\end{proof}

Let $\tilde c_{n_0} : \R^{\A_{n_0}} \to \R$ be any representative of $c_{n_0}$.
We use this lemma with matrices of the directive sequence $S^{n_0} \sigma$
and with the vector $t (1...1)M_{[0,n_0)} + \tilde c_{n_0}$.
Then, it gives a $k \geq n_0$ and a vector $w \in \Z^{\A_k}$ such that
\[
    ((t (1 ...1)M_{[0,n_0)} + \tilde c_{n_0} )M_{[n_0, k)} - w)M_{[k,n)} \xrightarrow{n \to \infty} 0.
\]
Let $c = \tilde c_{n_0} M_{[n_0,k)} - w$. Then, we have
\[
    (t (1 ...1)M_{[0,k)} + c)M_{[k,n)} \xrightarrow{n \to \infty} 0.
\]
Let $w$ be a return word of $\L^{\A_k}$, then we clearly have $c(w) \in \Z$ since $c = c_k$ mod $\Z^{\A_k}$.
This ends the proof of Theorem~\ref{thm:nec}.

\section{Eigenvalues of morphic subshifts} \label{sec:morphic}

Recall that a \defi{morphic} subshift is the subshift of a directive sequence of the form $\tau \sigma^\omega$.
In other words, it is the subshift generated by $\tau(X_\sigma)$.

\subsection{Characterization of eigenvalues of morphic subshifts}

The following result allows us to compute eigenvalues of morphic subshifts.

\begin{thm} \label{thm:car:eigs}
    Let $\sigma$ be a primitive aperiodic substitution, and let $\tau$ be a word morphism recognizable in $X_\sigma$.
    Then, the set of eigenvalues of $(X_{\tau \sigma^\omega}, S)$ is
    \[
        \left( W \cap \Delta_{M_\sigma} \right) v
    \]
    where
    \begin{itemize}
        \item $v$ is a generalized eigenvector of $M_\sigma$ for an eigenvalue of modulus $\geq 1$, annihilated by coboundaries (for example the Perron eigenvector) and normalized such that $(1...1)M_\tau v = 1$,
        \item $\Delta_{M_\sigma} = \{w \in \Q^{\A_\sigma} \mid \exists n \in \N,\ w M_{\sigma}^n \in \Z^{\A_\sigma}\}$, \\
        can be replaced by $\bigcup_{n \in \N} \Z^{\A_\sigma} M_\sigma^{-1}$ where $M_\sigma^{-1}$ is the pseudo-inverse of $M_\sigma$, defined in Subsection~\ref{ss:matrix},
        \item $W = \{w \in \Q^{\A_\sigma} \mid \exists x \in \R^d,\ xCV = w V\}
                = \Q^{\A_\sigma} \cap (C^\circ \cap \Span(V))^\circ$ is a $\Q$-vector space
        where
            \begin{itemize}
                \item $V$ is a matrix whose columns are bases of generalized eigenspaces for eigenvalues of modulus $\geq 1$,
                \item $C$ is a matrix whose first row is $(1...1)M_\tau$ and other rows are a basis of the set of coboundaries $R^\circ$ (see definition in Subsection~\ref{ss:cob}). 
            \end{itemize}
    \end{itemize}
\end{thm}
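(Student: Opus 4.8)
The overall strategy is to combine the necessary condition (Theorem~\ref{thm:nec}) and sufficient condition (Theorem~\ref{thm:suf}) for a number to be a continuous eigenvalue with the special structure of a morphic (eventually periodic) directive sequence $\tau\sigma^\omega$, where all matrices past the first step equal $M_\sigma$. The key simplification is that for such a sequence the asymptotic conditions $(t(1\ldots1)M_{[0,k)}+c)M_{[k,n)}\to 0$ become conditions about iterating the single matrix $M_\sigma$, and convergence $wM_\sigma^n\to 0$ holds precisely when $w$ annihilates the generalized eigenspaces of $M_\sigma$ for eigenvalues of modulus $\geq 1$. Let me sketch how I would go through it.

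First, I would translate the eigenvalue conditions into linear-algebra over $M_\sigma$. By Theorem~\ref{thm:nec}, a continuous eigenvalue $t$ yields some level $k$ and a linear form $c$ with $c(\ab w)\in\Z$ on return words such that $(t(1\ldots1)M_\tau M_\sigma^{\,k}+c)M_\sigma^{\,n}\to 0$. Since $\tau$ is recognizable (so Mossé-type recognizability holds and Theorem~\ref{thm:suf} applies as its converse), the sufficient direction gives that any such $t$ with convergence of the associated series is an eigenvalue; in the morphic case convergence to $0$ under iterating a fixed matrix $M_\sigma$ is automatically geometric once we are on the contracting part, so the ``$\to 0$'' and ``summable'' conditions coincide. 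The upshot I would record as a lemma: $t$ is an eigenvalue if and only if there exist $k\in\N$ and an integer-valued-on-return-words form $c$ with $t(1\ldots1)M_\tau M_\sigma^{\,k}+c$ annihilating $\Span(V)$, the span of generalized eigenspaces for eigenvalues of modulus $\geq 1$.

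Next I would extract the vector $v$ and the parameter set. Because $v$ is a generalized eigenvector for an eigenvalue of modulus $\geq 1$ annihilated by coboundaries and normalized by $(1\ldots1)M_\tau v=1$, evaluating the condition $(t(1\ldots1)M_\tau M_\sigma^{\,k}+c)$ against $V$ and pairing the $t(1\ldots1)M_\tau$ part against $v$ recovers $t$ as $wv$ for the appropriate rational vector $w$. The condition ``$c$ is integer on return words'' is exactly ``$c\in R^\circ+\Z^{\A_\sigma}$'', i.e.\ $c$ is a coboundary up to an integer vector; this is where Lemma~\ref{lem:rep} (every $\R/\Z$-coboundary lifts to a real coboundary) and Lemma~\ref{lem:cob:unity} (the coboundary space sits in generalized eigenspaces for roots of unity and $0$) enter. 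The set $\Delta_{M_\sigma}=\{w\in\Q^{\A_\sigma}\mid \exists n,\ wM_\sigma^{\,n}\in\Z^{\A_\sigma}\}$ is precisely the set of rational vectors that become integral after enough iterations, which is what accounts for the freedom of the level $k$ and the pseudo-inverse reformulation $\bigcup_n \Z^{\A_\sigma}M_\sigma^{-1}$. The definition of $W$ as $\{w\mid \exists x,\ xCV=wV\}=\Q^{\A_\sigma}\cap(C^\circ\cap\Span(V))^\circ$ encodes exactly that $wv$ is achievable by an admissible combination of the frequency row $(1\ldots1)M_\tau$ and the coboundary rows, modulo what dies on $\Span(V)$.

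Finally I would assemble the two inclusions. For ``$\subseteq$'': given an eigenvalue $t$, Theorem~\ref{thm:nec} produces $c=c_k$ with the annihilation property; I would show $t=wv$ for some $w\in W\cap\Delta_{M_\sigma}$ by reading off the row $x$ realizing $t(1\ldots1)M_\tau+c'$ (after pulling $c$ back through $M_\sigma^{\,k}$) as a combination of the rows of $C$, and the integrality-after-iteration places $w$ in $\Delta_{M_\sigma}$. For ``$\supseteq$'': given $w\in W\cap\Delta_{M_\sigma}$, I would reconstruct a form $c$ (real coboundary plus integer vector, via Lemma~\ref{lem:rep}) so that $w(1\ldots1)\ldots+c$ annihilates $\Span(V)$ and is integral on return words after $k$ iterations, then invoke Theorem~\ref{thm:suf} (or Remark~\ref{rem:unimod} in the unimodular case) to conclude $wv$ is a continuous eigenvalue. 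The main obstacle I expect is the bookkeeping identifying ``integer on return words'' with ``$R^\circ+\Z^{\A_\sigma}$'' and verifying that the $\Delta_{M_\sigma}$ condition is both necessary (from Theorem~\ref{thm:nec}'s $c=c_k$ being $\Z$-valued and the freedom in $k$) and sufficient (so that the series in Theorem~\ref{thm:suf} genuinely converges); reconciling the intrinsic description of $W$ with its annihilator form and confirming the two candidate descriptions of $\Delta_{M_\sigma}$ agree will require careful use of the Jordan/pseudo-inverse structure from Subsection~\ref{ss:matrix}.
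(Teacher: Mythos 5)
Your skeleton matches the paper's own route: combine Theorem~\ref{thm:nec} with Theorem~\ref{thm:suf}, observe that for the stationary tail $\sigma^\omega$ the convergence $(t(1\ldots1)M_\tau M_\sigma^k+c)M_\sigma^n\to 0$ is automatically exponential so the summability hypothesis of Theorem~\ref{thm:suf} is free, use Lemma~\ref{lem:rep} to identify ``integer-valued on return words'' with $R^\circ+\Z^{\A_\sigma}$, characterize $uM_\sigma^n\to 0$ by $uV=0$, and let $\Delta_{M_\sigma}$ absorb the freedom in the level $k$. But there is one genuine gap: the theorem asserts a description of the set of \emph{all} eigenvalues, and by the paper's definition an eigenvalue is a measurable ($L^2(\mu)$) one, whereas Theorem~\ref{thm:nec} applies only to \emph{continuous} eigenvalues. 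Your argument therefore proves only that the set of continuous eigenvalues equals $(W\cap\Delta_{M_\sigma})v$, and leaves open whether there are further measurable eigenvalues. The paper closes exactly this hole inside Proposition~\ref{prop:car}: after replacing $\sigma$ by a power fixing a letter $a$, it rewrites the subshift via the return substitution as the subshift of the directive sequence $\tau\theta(\partial_a\sigma)^\omega$, a power of $\partial_a\sigma$ being left-proper; this yields linear recurrence by Theorem~7.3.5 in~\cite{DurandPerrin2022}, and then Proposition~14 of~\cite{CDHM2003} upgrades every measurable eigenvalue to a continuous one. Nothing in your sketch supplies this step, and it does not follow from the nec/suf pair alone.

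A second, smaller issue is one you flagged but did not resolve. In the direction ``eigenvalue $\Rightarrow w\in W\cap\Delta_{M_\sigma}$'' you must transport $c$ and $w$ back through the pseudo-inverse, setting $c'=cM_\sigma^{-k}$ and $w'=wM_\sigma^{-k}$; but the image of a real coboundary under $M_\sigma^{-1}$ is not a priori a coboundary, so it is not immediate that $w'$ lies in $W$. The paper's proof handles this by showing $(R^\circ\cap I)M_\sigma=R^\circ\cap I$ on the eventual image $I=\bigcap_n\Q^{\A_\sigma}M_\sigma^n$ (from $M_\sigma R\subseteq R$, invertibility of the restriction of $M_\sigma$ to $I$, and $R^\circ M_\sigma^n\subseteq R^\circ\cap I$ for large $n$), which is precisely what gives $c'V\in R^\circ V$ and hence $w'\in W$. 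Likewise, justifying the replacement of $\Delta_{M_\sigma}$ by $\bigcup_{n\in\N}\Z^{\A_\sigma}M_\sigma^{-n}$ requires the decomposition $\Q^{\A_\sigma}=I+E_0$ together with $E_0\subseteq W\cap\Delta_{M_\sigma}$, where $E_0$ is the generalized left-kernel; your appeal to ``careful use of the Jordan/pseudo-inverse structure'' needs to be made concrete at exactly these two spots for the two inclusions to close.
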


The set $\Delta_{M_\sigma}$ is linked with another invariant called dimension group, see~\cite{DurandPerrin2022} for more details.

Note that if we remove the recognizability or aperiodicity hypothesis, then the set of eigenvalues
is between $(W \cap \Z^{\A_\sigma}) v$ and $(W \cap \Delta_{M_\sigma}) v$.

We show that eigenvalues of morphic subshifts are continuous.

\begin{prop} \label{prop:car}
    Under hypotheses of the previous theorem,
    the following statements are equivalent
    \begin{itemize}
        \item $t \in \R/\Z$ is a measurable eigenvalue of $(X_{\tau \sigma^\omega}, S)$. 
        \item $t \in \R/\Z$ is a continuous eigenvalue of $(X_{\tau \sigma^\omega}, S)$.
        \item There exists an integer $k \in \N$, a real coboundary $c$ of $X_\sigma$,
    and a row vector $w \in \Z^{\A_\sigma}$ such that for every generalized eigenvector $v$ of $M_\sigma$ associated with an eigenvalue of modulus $\geq 1$,
    \[
        (t(1...1)M_\tau M_\sigma^k + c - w)v = 0.
    \]
    \end{itemize}
\end{prop}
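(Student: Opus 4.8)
The plan is to prove the cycle of implications \emph{continuous} $\Rightarrow$ \emph{measurable} $\Rightarrow$ \emph{third condition} $\Rightarrow$ \emph{continuous}, where throughout I abbreviate the row vector $s := t(1 ... 1)M_\tau$, so that the third condition reads $(sM_\sigma^k + c - w)v = 0$ for every generalized eigenvector $v$ of $M_\sigma$ of modulus $\geq 1$. The implication \emph{continuous} $\Rightarrow$ \emph{measurable} is immediate, since a continuous eigenfunction lies in $L^2(\mu)$. I would also remark that Theorem~\ref{thm:nec}, applied to the directive sequence $\tau\sigma^\omega$, gives \emph{continuous} $\Rightarrow$ \emph{third condition} directly, providing a useful cross-check, but it is not needed to close the cycle.

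For \emph{third condition} $\Rightarrow$ \emph{continuous}, I would apply Theorem~\ref{thm:suf} to $\bm\sigma = \tau\sigma^\omega$. This sequence is recognizable: $\tau$ is recognizable in $X_\sigma = X^{(1)}$ by hypothesis, and $\sigma$ is recognizable in $X^{(n+1)} = X_\sigma$ for $n \geq 1$ by Theorem~\ref{thm:mosse}, since $\sigma$ is primitive and aperiodic. Taking the index $k' = k+1 \geq 1$, one has $\A_{k'} = \A_\sigma$, $t(1 ... 1)M_{[0,k')} = sM_\sigma^k$, and $M_{[k',n)} = M_\sigma^{\,n-k'}$. The third condition says $sM_\sigma^k + c - w$ annihilates every generalized eigenvector of modulus $\geq 1$, hence lies in the sum of generalized left-eigenspaces of $M_\sigma$ for eigenvalues of modulus $<1$, so $(sM_\sigma^k + c - w)M_\sigma^{\,m} \to 0$ exponentially and the series of Theorem~\ref{thm:suf} converges absolutely. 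Finally $c' := c - w$ satisfies $c'(\ab u) = c(\ab u) - w\cdot\ab u \in \Z$ for every return word $u$ of $\L_\sigma$, because the real coboundary $c$ vanishes on return words and $w$ is an integer vector. Theorem~\ref{thm:suf} then yields that $t$ is a continuous eigenvalue.

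The crux is \emph{measurable} $\Rightarrow$ \emph{third condition}. The idea is to use recognizability of $\tau$ to present $X_{\tau\sigma^\omega}$ as a tower over $(X_\sigma,S)$: every point decomposes uniquely as $S^j\tau(x)$ with $x \in X_\sigma$ and $0 \leq j < \abs{\tau(x_0)}$, so the measurable base $B \cong X_\sigma$ has first-return time $h(x_0) = \abs{\tau(x_0)} = (1 ... 1)M_\tau e_{x_0}$, and the induced map on $B$ is conjugate to the shift on $X_\sigma$. A measurable eigenfunction $f$ with $f\circ S = f + t$ restricts to $g := f|_B$, pulled back to a measurable $g : X_\sigma \to \R/\Z$ with
\[
    g(S^n x) - g(x) = t\,h\!\left(\ab x_{[0,n)}\right) = s\!\left(\ab x_{[0,n)}\right) \pmod \Z \quad \text{a.e.}
\]
Thus $s$ realizes a measurable eigenvalue of the additive cocycle $n \mapsto s(\ab x_{[0,n)})$ over the primitive substitution $(X_\sigma,S)$. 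I would then invoke Host's theorem for primitive substitutions (Theorem~(1.4) in~\cite{Host86}), in the form applying to such cocycles, which characterizes these measurable eigenvalues and shows they satisfy the coboundary/spectral condition: there exist $k \in \N$, a real coboundary $c$ of $X_\sigma$, and $w \in \Z^{\A_\sigma}$ with $(sM_\sigma^k + c - w)v = 0$ for all generalized eigenvectors $v$ of modulus $\geq 1$. Since $sM_\sigma^k = t(1 ... 1)M_\tau M_\sigma^k$, this is exactly the third condition.

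The main obstacle is this last step, i.e.\ the measurable-to-continuous upgrade, since the proof of Theorem~\ref{thm:nec} relies on continuity (finiteness of $\F$ and the $\epsilon$-separation argument) and does not survive for merely measurable eigenfunctions. The delicate points to verify are: the measurability and a.e.\ well-definedness of the tower conjugacy, which rest on recognizability of $\tau$ together with Mossé's theorem for $\sigma$; the characterization $uM_\sigma^{m}\to 0 \iff u$ annihilates every generalized eigenvector of modulus $\geq 1$, where one must treat Jordan blocks and modulus-one (root-of-unity) eigenvalues, for which $M_\sigma^{m}$ does not decay; and the decomposition of the resulting integer-valued linear form as a real coboundary plus an integer vector via Lemma~\ref{lem:rep}, the integer vector lying in $\Z^{\A_\sigma}$ because each letter belongs to $\L_\sigma$, so $c'(e_a) \in \Z$ for all $a \in \A_\sigma$.
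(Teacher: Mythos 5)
Your cycle is set up correctly, and two of its three implications coincide with the paper's proof: \emph{continuous} $\Rightarrow$ \emph{measurable} is trivial, and your proof of \emph{third condition} $\Rightarrow$ \emph{continuous} — recognizability of the directive sequence $\tau\sigma^\omega$ from the hypothesis on $\tau$ together with Theorem~\ref{thm:mosse} for $\sigma$, exponential decay of $(t(1\ldots1)M_\tau M_\sigma^k + c - w)M_\sigma^m$ because that row vector annihilates every generalized eigenvector of modulus $\geq 1$, and then Theorem~\ref{thm:suf} applied with $c-w$ — is exactly the argument in the paper.

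The genuine gap is in \emph{measurable} $\Rightarrow$ \emph{third condition}. Your tower reduction itself is sound: recognizability of $\tau$ does present $X_{\tau\sigma^\omega}$ as a tower over $(X_\sigma,S)$ with height $h(a)=\abs{\tau(a)}$, and the restriction $g$ of a measurable eigenfunction satisfies $g(S^n x)-g(x) = s\,\ab(x_{[0,n)}) \bmod \Z$ with $s = t(1\ldots1)M_\tau$. But there is no ``form of Host's theorem applying to such cocycles'' available to cite: Theorem~(1.4) of \cite{Host86} characterizes eigenvalues of the substitution system $(X_\sigma,S)$ itself, i.e.\ the case $s = t(1\ldots1)$, where the relevant quantities are $t\abs{\sigma^n(a)}$. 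Your $g$ is not an eigenfunction of $(X_\sigma,S)$ but a measurable transfer function for a weighted cocycle, and Host's measure-theoretic argument — which is what produces the coboundary, the convergence condition, and the measurable-to-continuous upgrade — would have to be redone from scratch in this weighted setting. You flag this difficulty yourself in your final paragraph, but invoking Host ``in the form applying to such cocycles'' does not discharge it; no such statement is proved in \cite{Host86} or in this paper. The paper circumvents the problem in a different order: it first upgrades measurable to continuous by proving $(X_{\tau\sigma^\omega},S)$ is linearly recurrent — rewriting $\tau\sigma^\omega$ as a left-proper directive sequence $\tau\theta(\partial_a\sigma)^\omega$ via the return substitution of Subsection~\ref{ss:rs} (after replacing $\sigma$ by a power with $\sigma(a)\in[a]$) and invoking Theorem~7.3.5 of \cite{DurandPerrin2022} — and then applying Proposition~14 of \cite{CDHM2003}, which says measurable eigenvalues of linearly recurrent systems are continuous. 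Once continuity is in hand, the third condition follows from Theorem~\ref{thm:nec} applied to $\tau\sigma^\omega$, together with Lemma~\ref{lem:rep} and the equivalence $uM_\sigma^n \to 0 \Leftrightarrow uV=0$ — the very ``cross-check'' you mention but set aside. Rerouting your argument that way (measurable $\Rightarrow$ continuous via linear recurrence, continuous $\Rightarrow$ third via Theorem~\ref{thm:nec}) closes the proof; as written, the crux step rests on an unproved generalization of Host's theorem.
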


\begin{proof}[Proof of Proposition~\ref{prop:car}]
    By Theorem~\ref{thm:mosse}, $\sigma$ is recognizable in the bi-infinite subshift $X_\sigma$.
    Thus, the directive sequence $\tau \sigma^\omega$ is recognizable and it is primitive.
    The last point gives a map $c : \R^{\A_\sigma} \to \R$ such that $c(r) \in \Z$ for every return word $r$ of $X_\sigma$ and
    such that $(t(1...1)M_\tau M_\sigma^k +c)M_\sigma^n \xrightarrow{n \to \infty} 0$.
    Note that the convergence is exponential.
    Thus, by Theorem~\ref{thm:suf}, this implies that $t$ is a continuous eigenvalue of $(X_{\tau \sigma^\omega},S)$.
    
    Obviously, a continuous eigenvalue is a measurable eigenvalue.
    
    Let $t \in \R/\Z$ be a measurable eigenvalue.
    We can describe the shift $(X_{\tau \sigma^\omega},S)$ with another directive sequence which is left-proper,
    using the notion of return substitution, defined in Subsection~\ref{ss:rs} thereafter.
    Indeed, we can assume that there exists $a \in \A_\sigma$ such that $\sigma(a) \in [a]$ even if its means replacing $\sigma$ by a power of itself.
    Then, a power of the return substitution $\partial_a \sigma$ is left-proper and there exists $\theta : \R \to \A_\sigma^*$ such that $\theta \circ \partial_a \sigma = \sigma \circ \theta$.
    Thus, the directive sequence $\tau \theta (\partial_a \sigma)^\omega$ has the same subshift as $\tau \sigma^\omega$.
    It implies by Theorem~7.3.5 in~\cite{DurandPerrin2022} (see also~\cite{Durand2000} and~\cite{Durand2003}) that the shift is linearly recurrent.
    Then, thanks to Proposition~14 in~\cite{CDHM2003} the eigenvalue $t$ is continuous.
    
    Let $t \in \R/\Z$ be a continuous eigenvalue of $(X_{\tau \sigma^\omega},S)$.
    Then, by Theorem~\ref{thm:nec}, there exists $k \in \N$ and a map $c : \R^{\A_\sigma} \to \R$ such that
    $c(r) \in \Z$ for every return word $r$ of $X_\sigma$ and such that
    \[
        (t (1...1) M_\tau M_\sigma^k + c) M_{\sigma}^n \xrightarrow{n \to \infty} 0.
    \]
    Then, by Lemma~\ref{lem:rep}, $c$ can be decomposed as the difference of a coboundary $c' \in (\R^{\A_\sigma})^*$ and an integer row vector $w \in \Z^{\A_\sigma}$.
    And for every row vector $u$ we have $u M_\sigma^n \xrightarrow{n \to \infty} 0 \Longleftrightarrow u V = 0$
    where $V$ is a matrix whose columns are generalized eigenvectors for eigenvalues of modulus $\geq 1$.
    Hence, we obtain the third point.
\end{proof}

\begin{proof}[Proof of Theorem~\ref{thm:car:eigs}]
    The set $W$ is indeed a $\Q$-vector subspace of $\Q^{\A_\sigma}$ since we obtain it as the kernel of some matrix obtained by making the system $x CV = w V$ in echelon form (see Subsection~\ref{ss:algo} for more details).
    
    Let us show that we can replace $\Delta_{M_\sigma}$ as claimed.
    Let $I = \bigcap_{n \in \N} \Q^{\A_\sigma} M_\sigma^n$ be the eventual left-image of $M_\sigma$ and let $E_0 = \bigcup_{n \in \N} \ker_{\operatorname{left}} M_\sigma^n$ be its generalized left-eigenspace for the eigenvalue $0$.
    We have $I + E_0 = \Q^{\A_\sigma}$.
    The set $E_0$ is included in both $W$ and $\Delta_M$, and $W \cap \Delta_M$ is an additive group.
    Thus, we have $E_0 + (W \cap \Delta_{M_\sigma} \cap I) = W \cap \Delta_{M_\sigma}$.
    The set $I$ is invariant by left multiplication by $M_\sigma$, and the restriction of $M_\sigma$ to $I$ is invertible,
    so we have $\Delta_M \cap I = \bigcup_{n \in \N} \Z^{\A_\sigma} M_\sigma^{-n} \cap I$.
    Thus, $(W \cap \Delta_{M_\sigma}) v = (W \cap \Delta_{M_\sigma} \cap I) v = (W \cap \bigcup_{n \in \N} \Z^{\A_\sigma} M_\sigma^{-n}) v$.
    
    Let $w \in W \cap \Delta_{M_\sigma}$.
    Let us prove that $w v$ is an eigenvalue of $(X_{\tau \sigma^\omega}, S)$.
    By definition of $W$, there exists $x \in \R^d$ such that $x CV = w V$.
    By definition of $C$, $x C$ is of the form $t(1...1) M_\tau + c$ for some real coboundary $c \in R^\circ$ of $X_\sigma$.
    By definition of $\Delta_{M_\sigma}$, there exists $k \in \N$ such that $w M_\sigma^k \in \Z^{\A_\sigma}$.
    Then $(t(1...1)M_\tau + c - w)V = 0$, and since generalized eigenspaces are invariant by $M_\sigma$,
    we have $(t(1...1)M_\tau M_\sigma^k + c' - w') V = 0$,
    where $c' = c M_\sigma^k \in R^\circ$ is a real coboundary of $X_\sigma$ and $w' = w M_\sigma^k \in \Z^{\A_\sigma}$.
    By Proposition~\ref{prop:car}, $t$ is an eigenvalue of $(X_{\tau \sigma^\omega}, S)$.
    Furthermore, we have $w v = (t(1...1)M_\tau +c)v = t(1...1) M_\tau v = t$.
    
    Reciprocally, let $t$ be an eigenvalue.
    By Proposition~\ref{prop:car}, there exists a real coboundary $c \in R^\circ$, $k \in \N$ and $w \in \Z^{\A_\sigma}$ such that
    $(t(1...1)M_\tau M_\sigma^k + c - w)V = 0$.
    Then, since generalized eigenspaces are invariant by $M_\sigma^{-1}$, we have $(t(1...1)M_\tau + c' - w') V = 0$ where $c' = c M_\sigma^{-k}$ and $w' = w M_\sigma^{-k}$,
    where $M_\sigma^{-1}$ is the pseudo-inverse of $M_\sigma$.
    We have $w' \in \bigcup_{n \in \N} \Z^{\A_\sigma} M_\sigma^{-n}$.
    Now, we have $M_\sigma R \subseteq R$ since the image of a return word by $\sigma$ is a product of return words.
    So $(R^\circ \cap I) M_\sigma \subseteq R^\circ \cap I$ where $I = \bigcap_{n \in \N} \Q^{\A_\sigma} M_\sigma^n$.
    Since the restriction of $M_\sigma$ to $I$ is invertible, and $R^\circ M_\sigma^n \subseteq R^\circ \cap I$ for $n$ large enough, we get $(R^\circ \cap I) M_\sigma = R^\circ \cap I$.
    Hence, $c'V \in R^\circ V$.
    Thus, there exists $x$ such that $xCV = w'V$.
    We deduce that $w' \in W$.
    Moreover we have $w'v = (t(1...1)M_\tau + c')v = t(1...1)M_\tau v = t$.
\end{proof}

\subsection{The algorithm} \label{ss:algo}


Let $\sigma$ and $\tau$ be two substitutions with $\sigma$ primitive aperiodic and $\tau$ recognizable in $X_\sigma$.
The algorithm to compute eigenvalues of $(X_{\tau \sigma^\omega}, S)$ is the following.

\begin{itemize}
    \item Compute the coboundary space $R^\circ$ of $X_\sigma$. See Subsection~\ref{ss:compute:cob} for more details.
    \item Let $C$ be the matrix with first row $(1...1)M_\tau$ and other rows with a basis of the coboundary space $R^\circ$.
        Let $d$ be the number of rows of $C$.
    \item Compute a matrix $V$ whose columns are bases of generalized eigenspaces of $M_\sigma$
            for eigenvalues of modulus $\geq 1$, with coordinates in the algebraic field $\overline{\Q}$.
    \item Echelon the system $x CV = w V$ with unknowns $x \in \R^{d}$, where $w$ is a row vector.
            It gives a matrix whose columns are linear combination of columns of $V$ and whose left-kernel
            intersected with $\Q^{\A_\sigma}$ gives $W$.
            Decompose each columns in a basis of a common number field of coefficients.
            It gives a rational matrix whose left-kernel is $W$.
            Then we can compute a basis of the $\Q$-vector space $W$.
    
    \item Compute the Perron eigenvector $v$ of $M_\sigma$ such that $(1...1)M_\tau v = 1$.
    
    \item Then, the set of eigenvalues is
        \[
            (W \cap \bigcup_{n \in \N} \Z^{\A_\sigma} M_\sigma^{-k}) v
        \]
        where $M_\sigma^{-1}$ is the pseudo-inverse of $M_\sigma$.
\end{itemize}

Note that we may replace the Perron eigenvector $v$ by any generalized eigenvector annihilated by coboundaries and such that
$(1...1)M_\tau v = 1$.
Note that in general, the set $(W \cap \Delta_{M_\sigma}) v$ is not equal to $W v \cap \Delta_{M_\sigma} v$ since $W$ has no reason to be $M_\sigma$-invariant.
However, in many situations, the set of eigenvalues can be made more explicit:

\begin{itemize}
    \item If $\sigma$ is pseudo-unimodular (i.e. the product of non-zero eigenvalues is $\pm 1$), 
        then the set of eigenvalues is $(W \cap \Z^{\A_\sigma}) v$.
        
        Indeed, by Lemma~6.4 in~\cite{Me23}, if $M_\sigma$ is pseudo-unimodular then $\Delta_{M_\sigma}$ is equal to 
        $\Z^{\A_\sigma}$ modulo the generalized eigenspace for the eigenvalue $0$.
        
    \item If the matrix $CV$ is invertible, or more generally if $W M_\sigma = W$,
        and if $v$ is an eigenvector for an eigenvalue $\beta$,
        then the set of eigenvalues is
        \[
            \Z \left[ \frac{1}{\beta} \right] (W \cap \Z^{\A_\sigma}) v.
        \]
        Indeed, we have
        \[
            \left( W \cap \bigcup_{n \in \N} \Z^{\A_\sigma} M_\sigma^{-n} \right) v
                        = \bigcup_{n \in \N} (W M_\sigma^n \cap \Z^{\A_\sigma}) v/\beta^{n}.
        \]
        
    \item If $(1...1)M_\tau$ is a left-eigenvector of $M_\sigma$ for an eigenvalue $\beta$, then 
        we fall into the previous case. Indeed, it gives that $C$ and $C M_\sigma$ have same row space
        modulo the generalized left-eigenspace of $M_\sigma$ for the eigenvalue $0$ and we have
        $xCV = wV \Leftrightarrow (xC - w)M_\sigma V = 0 \Leftrightarrow xCM_\sigma V = w M_\sigma V$, so $W M_\sigma = W$.
        
        Note that when $\tau = \id_{\A_\sigma}$, it corresponds to the constant length case.

    \item If there exists $n \in \N$ such that $M_\sigma^n = 0$ mod $\det(M_\sigma)$.
        Then, the set of eigenvalues is
        \[
            \Z \left[ \frac{1}{\det(M_\sigma)} \right] (W \cap \Z^{\A_\sigma}) v.
        \]
        Indeed, under this hypothesis we have $\Delta_M = \bigcup_{n \in \N} \frac{1}{\det(M_\sigma)^n} \Z^{\A_\sigma}$.
\end{itemize}

See Section~\ref{sec:exs} for examples of computations of eigenvalues.
Algorithms of this paper have been implemented in the package eigenmorphic
(see~\url{https://gitlab.com/mercatp/eigenmorphic})
for the Sage math software (see~\url{https://www.sagemath.org/}).
It can be installed with the command line
\begin{lstlisting}[language=bash]
  $ sage -pip install eigenmorphic
\end{lstlisting}
Examples of computations with this package can be found here:
\url{https://www.i2m.univ-amu.fr/perso/paul.mercat/SubshiftsEigenvalues.pdf}

\subsection{Computation of coboundaries} \label{ss:compute:cob}

The following lemma permits to reduce the computation of the return space to the one of return space on only one letter,
if we want to compute coboundaries only modulo the generalized left eigenspace $E_0$ for the eigenvalue $0$.
It is indeed what we need in the computation of eigenvalues of morphic subshifts since $E_0 V = 0$.

\begin{lem}
    Let $\sigma$ be a primitive substitution over alphabet $\A$ and let $a \in \A$.
    Let $R$ be the return space and $R_a$ be the return space on letter $a$.
    Let $I = \bigcap_{n \in \N} M_\sigma^n \Q^\A$.
    Then
    \[
        I \cap R = I \cap R_a \quad \text{ and }\quad
        R^\circ + E_0 = R_a^\circ + E_0,
    \]
    where $E_0 = I^\circ$ is the generalized left-eigenspace of $M_\sigma$ for the eigenvalue $0$.
\end{lem}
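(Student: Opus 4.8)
The plan is to reduce the two equalities to a single inclusion by annihilator duality, and then to establish that inclusion by a combinatorial decomposition of return words. First I would observe that the two stated identities are equivalent. Working in the perfect pairing between $\R^\A$ and its dual and using $(U \cap W)^\circ = U^\circ + W^\circ$ together with $E_0 = I^\circ$, one gets $(I \cap R)^\circ = I^\circ + R^\circ = E_0 + R^\circ$ and likewise $(I \cap R_a)^\circ = E_0 + R_a^\circ$. Hence $I \cap R = I \cap R_a$ holds if and only if $R^\circ + E_0 = R_a^\circ + E_0$, so it suffices to prove the column-space statement $I \cap R = I \cap R_a$. Since every return word on the letter $a$ is in particular a return word, we have $R_a \subseteq R$, whence $I \cap R_a \subseteq I \cap R$; the whole content is the reverse inclusion $I \cap R \subseteq R_a$.

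For that inclusion I would fix $x \in X_\sigma$ and use that primitivity makes the subshift linearly recurrent, so occurrences of $a$ in $x$ have bounded gaps and only finitely many words occur between one occurrence of $a$ and the next. Given a return word $r$ to an arbitrary letter $b$, realize it between consecutive occurrences of $b$ at positions $p < q$, and let $p^{-} \leq p$, $q^{-} \leq q$ be the last occurrences of $a$ up to $p$ and $q$. Then $x_{[p^{-},q^{-})}$ runs from an $a$ to an $a$, so $\ab(x_{[p^{-},q^{-})}) \in R_a$, and by additivity of $\ab$ over concatenation
\[
    \ab(r) = \ab(x_{[p^{-},q^{-})}) + \ab(x_{[q^{-},q)}) - \ab(x_{[p^{-},p)}).
\]
The two correction terms are abelian vectors of words from an $a$ to the following occurrence of $b$ with no intermediate $a$, that is, of proper prefixes of return words to $a$; by linear recurrence there are finitely many of them, with abelian vectors forming a finite set $P$. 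Thus every generator of $R$ lies in $R_a + \Span(P - P)$, giving $R \subseteq R_a + W_0$ with $W_0 := \Span(P - P)$ finite-dimensional and spanned by bounded integer vectors.

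The main obstacle is to show that this bounded correction space $W_0$ contributes nothing to $I \cap R$. Here I would exploit that $M_\sigma R \subseteq R$ and that $M_\sigma$ restricts to a bijection of $I$, so $I \cap R$ is $M_\sigma$-invariant with $M_\sigma$ acting bijectively on it; the difficulty is that the coefficients of the $W_0$-part of $M_\sigma^{-n} v$ need not stay bounded, and the eventual image $I$ may carry eigenvalues of modulus $\leq 1$. I expect to resolve this with recognizability: after replacing $\sigma$ by a power so that $a$ is fixed by the first-letter map, the prefixes of return words to $a$ desubstitute under $\sigma$ into prefixes of return words to $a$, yielding $M_\sigma W_0 \subseteq R_a + W_0$ and allowing the correction space to be absorbed into $R_a$ along $I$, hence $I \cap R \subseteq R_a$. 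As a parallel route giving the row statement directly, I note that for $c \in R_a^\circ$ the potential $n \mapsto c(\ab\, x_{[0,n)})$ is constant at occurrences of $a$ and therefore bounded, so $c(\ab\, \L_\sigma)$ is bounded and $(c M_\sigma^n)_n$ is bounded; the rationality and Galois argument of Lemma~\ref{lem:cob:unity} then applies verbatim to a single such $c$ and places $R_a^\circ$, exactly like $R^\circ$, inside the sum of generalized left-eigenspaces of $M_\sigma$ for eigenvalues that are roots of unity or zero. The remaining and most delicate point is then to match the root-of-unity components of $R^\circ$ and $R_a^\circ$ modulo $E_0$, which is where the recognizability input above is really needed.
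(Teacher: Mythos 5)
Your duality reduction is sound and matches the paper's closing step, your inclusion $I \cap R_a \subseteq I \cap R$ is fine, and the endpoint-shifting decomposition giving $R \subseteq R_a + W_0$ is correct as far as it goes. But the crucial inclusion $I \cap R \subseteq R_a$ is never actually proved: the ``absorption'' step is precisely where the proof should be, and the mechanism you propose cannot deliver it. From $M_\sigma W_0 \subseteq R_a + W_0$ and the invertibility of $\restriction{M_\sigma}{I}$ you can write any $v \in I \cap R$ as $v = M_\sigma^n u_n$ with $u_n \in I \cap R \subseteq R_a + W_0$, but that only yields $v \in R_a + M_\sigma^n W_0 \subseteq R_a + W_0$ for every $n$ --- no better than where you started. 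Invariance of $R_a + W_0$ gives no decay of the $W_0$-component (your boundedness arguments control rows, i.e.\ elements of $R_a^\circ$, not columns), and once you pass from $R$ to the strictly larger space $R_a + W_0$ you have discarded the very constraint $v \in R$ that makes the statement true. Two further problems: the normalization ``replace $\sigma$ by a power so that $\sigma(a)$ starts with $a$'' is not available for an arbitrary prescribed letter $a$ (only letters on a cycle of the first-letter map can be so fixed, while the lemma is stated for every $a \in \A$); and your parallel row-side route only places $R_a^\circ$ inside the sum of generalized left-eigenspaces for eigenvalues roots of unity and zero --- but Lemma~\ref{lem:cob:unity} already places $R^\circ$ there too, and this common upper bound says nothing about the required equality $R_a^\circ + E_0 = R^\circ + E_0$. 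You flag both points yourself as ``the most delicate''; that is exactly where the proof is missing.

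The paper closes this gap with a conjugation trick that produces no correction terms at all, so there is no space $W_0$ to dispose of. Since the image of a return word under $\sigma$ is a product of return words, $M_\sigma R \subseteq R$, and invertibility of $\restriction{M_\sigma}{I}$ gives $M_\sigma^k(R \cap I) = R \cap I$. Choose $k$ with $M_\sigma^k > 0$ and return words $(w)_{w \in J}$ whose abelian vectors form a basis of $R \cap I$; then $(M_\sigma^k \ab w)_{w \in J}$ is again a basis of $R \cap I$. Writing $\sigma^k(w_0) = pas$ (possible by positivity, for the given letter $a$, with no normalization needed), the word $\sigma^k(w w_0) = pas\,\sigma^k(Sw)\,pas$ lies in $\L_\sigma$, so its cyclic rotate $as\,\sigma^k(Sw)\,p$ is a product of return words on $a$ with $\ab(as\,\sigma^k(Sw)\,p) = \ab(\sigma^k(w)) = M_\sigma^k \ab(w)$, because cyclic conjugation preserves abelianization. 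Hence a basis of $R \cap I$ lies in $R_a$, giving $R \cap I \subseteq R_a \cap I$ exactly. If you want to salvage your approach, this is the fix: instead of shifting endpoints inside an orbit (which creates the error terms), apply $\sigma^k$ and rotate, so the errors vanish identically, then pull back through $\restriction{M_\sigma^k}{I}$.
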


\begin{proof}
    The image of a return word by a substitution is a product of return words. Thus $M_\sigma R \subseteq R$.
    Since $\restriction{M_\sigma}{I}$ is invertible, we have $M_\sigma (R \cap I) = R \cap I$.
    Let $k \in \N$ such that $M_\sigma^k > 0$.
    Let $(w)_{w \in J}$ be a family of return words such that $(\ab w)_{w \in J}$ is a basis of $R \cap I$.
    Then, $(M_\sigma^k \ab w)_{w \in J}$ is also a basis of $R \cap I$.
    Let $w \in J$. Let $p,s \in \A^*$ such that $\sigma^k(w_0) = pas$.
    The word $\sigma^k(w w_0)$ is in the language of $\sigma$, so $as\sigma^k(Sw)p$ is a product of return word on letter $a$.
    Moreover, $\ab(as\sigma^k(Sw)p) = \ab(\sigma^k(w))$.
    Thus $R \cap I \subseteq R_a \cap I$. The other inclusion is obvious.
    Then, we have $R^\circ + E_0 = (R \cap I)^\circ = (R_a \cap I)^\circ = R_a^\circ + E_0$.
\end{proof}


The computation of the set $\RR_a$ of return words on letter $a$ can be done by computing a substitution
called return substitution and whose alphabet is the set of return words on letter $a$.
This is done in the next subsection.

\begin{rem}
    The coboundary space can be computed more efficiently by computing the coboundary graph,
    that is the largest invariant oriented graph
    with edges labeled by letters where each letter appears exactly once.
\end{rem}

\subsection{Return substitution} \label{ss:rs}

The notion of return substitution comes from F. Durand (see~\cite{Durand98}).
We recall the definition and how to compute it even if it is well-know, in order to have a complete algorithm.
Let $\sigma$ be a primitive substitution, and let $a \in \A_\sigma$ be a letter such that $\sigma(a) = a$.
Note that such a letter always exists up to replacing $\sigma$ with a power of itself.
The \defi{return substitution} on letter $a$ is denoted as $\sigma_a$ or $\partial_a \sigma$.
Its alphabet is the set $\RR_a$ of return words on letter $a$, and its satisfies
$\sigma_a \circ \varphi_a = \varphi_a \circ \sigma$, where $\varphi_a : \RR_a \to \A_\sigma^*$
maps an element of $\RR_a$ to the corresponding word.
We can also extend this definition in an obvious manner:
if $w$ is a prefix of a fixed point of $\sigma$, then we the define the return substitution $\sigma_w$ accordingly.

We can compute the return substitution with the following algorithm:
\begin{itemize}
    \item Compute $\sigma^n(a)$ from $n=1$ and up to find a first return word on letter $a$.
    \item Take the image of a return word $r$ by $\sigma$: it is a product of return words.
            We deduce what is $\sigma_a(r)$.
    \item Continue the previous step with every new return word found.
\end{itemize}
When this algorithm stops, then we have found every return word on letter $a$.
Indeed, if $x$ is the fixed point of $\sigma$ starting by letter $a$
and if $A$ is such that $\varphi_a(A)$ is the first return word found,
then $x$ is equal to $\varphi_a(y)$ where $y$ is the fixed point of $\sigma_a$ starting by letter $A$.
Since $\sigma$ is primitive, every return word appears in $x$, thus the alphabet of $\sigma_a$
is the set of return words on letter $a$.

\begin{ex}
    Consider the Thue-Morse substitution 
    \[
        \sigma: a \mapsto ab,\ b \mapsto ba.
    \]
    If we compute $\sigma^2(a) = abba$, we see that $abb$ is a return word.
    Then, we compute $\sigma(abb) = abbaba$. We found two new return words: $\{ab, a\}$.
    We compute $\sigma(ab) = abba$ and $\sigma(a) = ab$. There are no more return words.
    Thus the return substitution is
    \[
        \sigma_a : A \mapsto ABC,\ B \mapsto AC,\ C \mapsto B
    \]
    with $\varphi_a : A \mapsto abb,\ B \mapsto ab,\ C \mapsto a$.
\end{ex}

\subsection{Deciding aperiodicity} \label{ss:period}

Let $\sigma$ be a primitive substitution.
Using return substitutions, we can provide an algorithm to decide if $(X_\sigma, S)$ is periodic.
The fact that aperiodicity is decidable is already well-know, but we give an algorithm for completeness.

Assume that $\sigma(a) = a$, up to replacing $\sigma$ by a power of itself.
We define the $n$-th derivative of $\sigma$ for the letter $a$ by induction on $n \in \N$:
if $n = 0$, $\partial_a^n \sigma = \sigma$; and if $n > 0$, $\partial_a^{n} = \partial \partial_a^{n-1} \sigma$,
where the return substitution is taken for the first letter, ensuring that the fixed point with this first letter is mapped
to the fixed point of the original substitution starting with $a$.

\begin{lem}
    $(X_\sigma, S)$ is periodic if and only if there exists $n \in \N$ such that the alphabet of $\partial_a^n \sigma$
    has only one letter.
\end{lem}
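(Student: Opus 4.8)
The plan is to track the minimal period of the successive fixed points $y_n$ of the substitutions $\partial_a^n\sigma$, using the description of each $\partial_a^n\sigma$ as a primitive substitution whose alphabet is exactly the set of return words occurring in $y_n$. First I would reduce to the case $\sigma(a)=a$, which changes neither $X_\sigma$ nor the derivation (replacing $\sigma$ by a power), and record the basic dictionary established in Subsection~\ref{ss:rs}: since each $\partial_a^n\sigma$ is primitive (primitivity is preserved under derivation), its subshift is minimal, so every return word appears in $y_n$, and hence the alphabet of $\partial_a^n\sigma$ coincides with the set of letters occurring in $y_n$. Consequently, the condition ``the alphabet of $\partial_a^n\sigma$ has a single letter'' is equivalent to ``$y_n$ is a constant sequence'', i.e. $y_n$ has minimal period $1$ over its own alphabet.

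For the implication ($\Leftarrow$), suppose $\partial_a^n\sigma$ has a one-letter alphabet $\{R\}$, so $y_n = R^\omega$. Iterating the identity $x=\varphi_a(y)$ relating the fixed point of a substitution to that of its return substitution (Subsection~\ref{ss:rs}), I obtain $x=\Phi(R^\omega)$, where $\Phi$ is the composite of the $n$ coding morphisms $\varphi_a$ across the levels. As $\Phi$ is a word morphism, $x=(\Phi(R))^\omega$ is periodic; and since $X_\sigma$ is minimal it equals the (finite) orbit closure of $x$, hence $X_\sigma$ is periodic.

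For the implication ($\Rightarrow$), assume $X_\sigma$ is periodic, so $x=w^\omega$ with $w$ the minimal period and $|w|=p$. The key step is that one derivation strictly decreases the minimal period as long as it exceeds $1$. Indeed, the occurrences of $a$ in $x$ lie at positions that are periodic modulo $p$, with exactly $m:=|w|_a$ of them per period; the sequence of return words read along $x$ is therefore periodic of period $m$, so the derived fixed point $y_1$ is periodic of minimal period $p_1\le m$. If $p>1$ then $w\neq a^p$ (otherwise $x=a^\omega$ would have minimal period $1$), so $w$ contains a non-$a$ letter and $m=|w|_a<|w|=p$, giving $p_1\le m<p$. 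Iterating, the minimal periods of $y_0,y_1,y_2,\dots$ form a strictly decreasing sequence of positive integers, which must reach $1$ at some level $n$; by the dictionary above, the alphabet of $\partial_a^n\sigma$ then has a single letter.

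The main obstacle is the period-decrease inequality $p_1\le|w|_a$: it rests on the fact that the return-word coding of a periodic word is again periodic, with period governed by the number of occurrences of $a$ in a single period of $x$, and one must keep the bookkeeping of ``return substitution on the first letter'' consistent across levels, namely that the distinguished first letter of $y_n$ always corresponds to the return word beginning at position $0$, which $\partial_a^n\sigma$ fixes. The remaining points, preservation of primitivity (hence minimality) under derivation and the morphism property of the composite coding map $\Phi$, are routine.
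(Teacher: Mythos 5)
Your proof is correct and follows essentially the same route as the paper: reduce via primitivity to the periodicity of the fixed point, then show that periodicity is equivalent to the derivative sequence collapsing to a one-letter alphabet. The paper merely asserts this last combinatorial fact in one sentence, whereas you supply its proof (the strictly decreasing minimal-period induction via $p_{n+1}\le \abs{w}_a < p_n$), so your argument is a fleshed-out version of the paper's.
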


\begin{proof}
    Since $\sigma$ is primitive, the shift is periodic if and only if the fixed point $x$ starting by letter $a$
    is periodic.
    And a word is periodic if and only if the sequence of its derivatives (i.e. rewriting the word as product of its return words on first letter) becomes constant to the word with only one letter.
\end{proof}

Such criterion can be algorithmically tested, since the sequence $\partial_a^n \sigma$ is ultimately periodic
thank to the following proposition (see Proposition~7.6.7 in~\cite{DurandPerrin2022}), and since $\partial^n \sigma$ is of the form $\sigma_p$ for some prefix $p$ of a fixed point of $\sigma$.

\begin{prop}
    Let $x$ be a fixed point of a primitive substitution $\sigma$.
    Then the set $\{\sigma_w \mid w \text{ prefix of } x\}$ is finite.
\end{prop}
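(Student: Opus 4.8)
The plan is to derive finiteness from the \emph{linear recurrence} of $(X_\sigma,S)$, which holds because $\sigma$ is primitive. The strategy is to show that every return substitution $\sigma_w$ is a substitution on an alphabet of uniformly bounded size, with images of uniformly bounded length; a substitution with these two properties belongs, up to renaming of its alphabet of return words, to a finite set, and this yields the conclusion. Since the prefixes of $x$ of bounded length are finitely many and contribute only finitely many return substitutions, I may freely assume that $\abs{w}$ is as large as needed, in particular large enough that Mossé recognizability (Theorem~\ref{thm:mosse}) applies.

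First I would record the quantitative consequences of linear recurrence. There is a constant $K$ such that for every prefix $w$ of $x$ the number of return words satisfies $\abs{\RR_w} \le K(K+1)$, and every return word $r$ to $w$ satisfies $\abs{w}/K \le \abs{r} \le K\abs{w}$. The upper bound on $\abs{r}$ is immediate from the definition of linear recurrence, since an occurrence of $w$ must reappear within $K\abs{w}$ letters. The lower bound $\abs{w}/K \le \abs{r}$ — that occurrences of a long word cannot accumulate too densely — is the delicate input, and this is the step I expect to be the main obstacle; it is where genuine linear recurrence, and not merely primitivity, is used.

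Next I would bound the image lengths of $\sigma_w$. The defining relation gives $\varphi_w(\sigma_w(A)) = \sigma(\varphi_w(A))$, so for the return word $r = \varphi_w(A)$ the word $\sigma(r)$ has length at most $(\max_a \abs{\sigma(a)})\abs{r} \le K(\max_a\abs{\sigma(a)})\abs{w}$. By recognizability, $\sigma(r)$ decomposes cleanly into return words to $w$, and this decomposition is precisely the reading of $\sigma_w(A)$; since each factor has length at least $\abs{w}/K$, the number of factors is bounded by
\[
    \abs{\sigma_w(A)} \le \frac{K(\max_a\abs{\sigma(a)})\abs{w}}{\abs{w}/K} = K^2 \max_a\abs{\sigma(a)} =: M,
\]
a bound independent of both $w$ and $A$.

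Finally, the two uniform bounds $\abs{\RR_w} \le K(K+1) =: L$ and $\abs{\sigma_w(A)} \le M$ conclude the argument. After renaming the set $\RR_w$ of return words to an alphabet contained in $\{1,\dots,L\}$, each $\sigma_w$ becomes a substitution on an alphabet of size at most $L$ whose images have length at most $M$, and there are only finitely many such substitutions. Hence $\{\sigma_w \mid w \text{ prefix of } x\}$ is finite. The crux of the whole argument is the lower bound $\abs{w}/K \le \abs{r}$ on return word lengths, without which the count of return-word factors in $\sigma(r)$ could not be bounded uniformly; the clean decomposition of $\sigma(r)$ into return words, justified by recognizability, is the other point that requires care.
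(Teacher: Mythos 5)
You should know that the paper itself gives no proof of this proposition: it quotes it from Proposition~7.6.7 of \cite{DurandPerrin2022}, and your strategy --- linear recurrence of $(X_\sigma,S)$ gives uniform bounds on the number and lengths of return words, hence return substitutions over uniformly bounded alphabets with uniformly bounded image lengths, hence finiteness --- is precisely Durand's proof that the reference records. However, as written your argument has a genuine gap at exactly the step you flag as the crux: the lower bound $\abs{w}/K \le \abs{r}$ is \emph{false} without an aperiodicity assumption, and the proposition is stated (and used by the paper) without one. That bound is deduced from $(K+1)$-power-freeness of linearly recurrent subshifts, which fails for periodic fixed points: for $\sigma : a \mapsto ab,\ b \mapsto ab$ and $x = (ab)^\omega$, the unique return word to the prefix $w = (ab)^n$ is $ab$, of length $2$, while $\abs{w} \to \infty$. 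The periodic case cannot be excluded here, since the paper invokes the proposition precisely to \emph{decide} aperiodicity (Subsection~\ref{ss:period}). The repair is cheap: if $x$ is periodic with primitive period $p$, then for every sufficiently long prefix $w$ the occurrences of $w$ in $x$ are exactly the multiples of $\abs{p}$, so $\RR_w = \{p\}$ and $\sigma_w : A \mapsto A^k$ where $\sigma(p) = p^k$; together with the finitely many short prefixes this settles that case, and your argument then runs in the aperiodic case, where power-freeness is available.

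A second, smaller defect: your appeal to Mossé recognizability is both unavailable and unnecessary. Unavailable, because Theorem~\ref{thm:mosse} also assumes aperiodicity, and recognizability is a property of $\sigma$ on $X_\sigma$, not something that ``applies'' once $\abs{w}$ is large enough. Unnecessary, because the decomposition of $\sigma(r)$ into return words is canonical: since $w$ is a prefix of the fixed point, $w$ is a prefix of $\sigma(w)$, so $\sigma(r)w \in \L_\sigma$ begins and ends with occurrences of $w$, and cutting at \emph{all} occurrences of $w$ gives the unique factorization into return words (each return word $r'$ has exactly two occurrences of $w$ in $r'w$, so a straddling occurrence would produce a forbidden third one); this is exactly how the paper's algorithm in Subsection~\ref{ss:rs} computes $\sigma_w$. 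Finally, note that for the set $\{\sigma_w\}$ to be literally finite the alphabets must be named canonically (e.g.\ indexing $\RR_w$ by order of first occurrence, as in \cite{DurandPerrin2022}); your bounds give finiteness ``up to renaming,'' which is the intended reading, but this convention deserves a sentence.
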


\begin{ex}
    Consider the Thue-Morse substitution
    \[
        \sigma: a \mapsto ab,\ b \mapsto ba.
    \]
    Its consecutive return substitutions on letter $a$ are
    \begin{eqnarray*}
        \partial_a \sigma: & A \mapsto ABC,\ B \to AC,\ C \mapsto B \\
        \partial_a^2 \sigma: & 0 \mapsto 01,\ 1 \to 23,\ 2 \mapsto 013,\ 3 \mapsto 2
    \end{eqnarray*}
    Then, we have $\partial_a^3 \sigma = \partial_a^2 \sigma$.
    We conclude that $(X_\sigma, S)$ is aperiodic.
\end{ex}

\begin{ex}
    Consider the substitution $\sigma:
a \mapsto ab\ 
b \mapsto c\ 
c \mapsto abc
$.
Its derivative is $\partial_a \sigma : A \mapsto AA$ thus $(X_\sigma, S)$ is periodic.
\end{ex}

\begin{rem} \label{rem:per}
    If $\sigma$ is primitive periodic, then its Perron eigenvalue is an integer.
    Indeed, if $X_\sigma$ is of the form $\bigcup_{k = 0}^{\abs{w}-1} S^k w^\omega$, then $\ab(w)$ is a Perron eigenvector of $M_\sigma$
    since it is the limit of $\abs{w} \ab(\sigma^{n}(w_0))/\abs{\sigma^{n}(w_0)}$.
\end{rem}

\subsection{Semi-deciding recognizability} \label{ss:rec}

Given a substitution $\sigma$ and a morphism $\tau$, the fact that $\tau$ is recognizable in $X_\sigma$
is semi-decidable thank to the following.

\begin{prop}
    Let $\sigma$ be a primitive substitution and $\tau$ be a word morphism. Let $X_\sigma$ be the bi-infinite shift associated with $\sigma$.
    The substitution $\tau$ is recognizable in $X_\sigma$ if and only if
    \[
        \begin{split}
            \exists \epsilon > 0,\ \forall x,y \in X_\sigma,\ \forall 0 \leq p < \abs{\tau(x_0)},\ \forall 0 \leq q < \abs{\tau(y_0)}, \\
            d(S^{p} \tau(x), S^{q} \tau(y)) \leq \epsilon\ \Longrightarrow\ p=q \text{ and } x_0 = y_0.
        \end{split}
    \]
\end{prop}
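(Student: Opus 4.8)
The plan is to prove the equivalence by treating the two implications separately: the direction ``recognizable $\Rightarrow$ uniform condition'' is a soft compactness argument, while the converse requires a combinatorial argument on the cut points of the two $\tau$-factorizations. Throughout I write $Y$ for the shift generated by $\tau(X_\sigma)$ and use the convention that $\tau(x)$ is the bi-infinite word in which the block $\tau(x_0)$ occupies positions $[0, \abs{\tau(x_0)})$, so that the map $x \mapsto \tau(x)$ is continuous on $X_\sigma$ and $S^m \tau(x) = \tau(S^n x)$ whenever $m$ is the starting position of the block $\tau(x_n)$.

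\textbf{Recognizability implies the uniform condition.} Here I would argue by contradiction. Negating the statement, for each $k \in \N$ I obtain words $x_k, y_k \in X_\sigma$ and indices $0 \le p_k < \abs{\tau((x_k)_0)}$, $0 \le q_k < \abs{\tau((y_k)_0)}$ with $d(S^{p_k}\tau(x_k), S^{q_k}\tau(y_k)) \le 1/k$ yet $(p_k,(x_k)_0) \ne (q_k,(y_k)_0)$. Since the alphabet is finite, the first letters take finitely many values, and the indices $p_k,q_k$ are bounded by $\max_a \abs{\tau(a)}$; so after extracting a subsequence I may assume they are all constant, and by compactness of $X_\sigma$ that $x_k \to x$ and $y_k \to y$. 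Continuity of $x \mapsto \tau(x)$ then gives $S^p\tau(x) = S^q\tau(y)$ in the limit, an element of $Y$ with two admissible decompositions. Recognizability forces $p=q$ and $x=y$, so in particular $x_0=y_0$, contradicting the choice of the counterexamples.

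\textbf{The uniform condition implies recognizability.} Suppose $z = S^p\tau(x) = S^q\tau(y)$ with both decompositions admissible. Applying the hypothesis at distance $0 \le \epsilon$ immediately yields $p=q$ and $x_0=y_0$, hence $\tau(x)=\tau(y)=:z$ after composing with $S^{-p}$; it remains to prove $x=y$. I would encode each decomposition by its set of cut points $C_x \subseteq \Z$ (the starting positions of the blocks $\tau(x_n)$), and likewise $C_y$, noting $0 \in C_x \cap C_y$. The crux is to show $C_x = C_y$: if some $m \in C_y \setminus C_x$, then $m$ lies strictly inside a block $\tau(x_n)$, so $S^m z = S^{p'}\tau(S^n x)$ with $0 < p' < \abs{\tau(x_n)}$, while $S^m z = \tau(S^{n'}y)$ with index $0$; the hypothesis applied to this pair would force $p'=0$, a contradiction, and symmetrically for $C_x \setminus C_y$. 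Once $C_x = C_y$, at each common cut point the two aligned images $\tau(S^n x)$ and $\tau(S^{n'}y)$ agree (distance $0$), so the hypothesis gives matching first letters; since both enumerations are anchored at $0$, this yields $x_n = y_n$ for all $n$, i.e. $x=y$.

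\textbf{Main obstacle.} The forward implication is the routine part; the real content is the converse, where the hypothesis only controls first letters near the origin and must be bootstrapped to global equality of $x$ and $y$. The decisive idea is the non-interleaving of cut points: a cut of one factorization falling strictly inside a block of the other produces exactly the forbidden configuration, namely equal $\tau$-images at distance $0$ but with $p' \ne q$, which the uniform condition excludes. The step I would check most carefully is the bi-infinite boundary bookkeeping — the identity $S^m\tau(x)=\tau(S^n x)$ at block starts and the admissibility ranges $0 \le p < \abs{\tau(x_0)}$ — since these underpin both implications.
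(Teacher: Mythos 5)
Your proposal is correct and takes essentially the same approach as the paper: the forward direction is the identical compactness/subsequence-extraction argument, and the converse uses the same key idea of applying the hypothesis at distance $0$ to force $p=q$, $x_0=y_0$ and then propagate equality of letters. The only difference is cosmetic: you organize the propagation via the non-interleaving of cut-point sets $C_x = C_y$, whereas the paper runs a two-sided induction, shifting $x$ and $y$ one block at a time in both directions.
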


In other words, we can uniquely desubstitute if we know the bi-infinite word in a large enough window.
It permits to semi-decide the recognizability of $\tau$ in $X_\sigma$ since
for each $\epsilon > 0$ there are only a finite number of possibilities to test.

\begin{proof}
    Let $\epsilon > 0$ be as in the last statement.
    Let us prove that $\tau$ is recognizable in $X_\sigma$.
    Let $x$, $y \in X_\sigma$, $0 \leq p < \abs{\tau(x_0)}$ and $0 \leq q < \abs{\tau(y_0)}$ be such that
    $S^p\tau(x) = S^q \tau(y)$.
    We have $d(S^p \tau(x), S^q \tau(y)) = 0 < \epsilon$
    so $p = q$ and $x_0 = y_0$.
    Then, if we shift $x$ and $y$,
    we have $d(\tau(S x), \tau(S y)) = 0$, thus $x_{1} = y_1$.
    And we can also shift in the other direction:
    we have $d(S^{-1} \tau(x), S^{-1} \tau(y)) = 0$, thus $x_{-1} = y_{-1}$.
    By induction, we get that $x = y$. Thus $\tau$ is recognizable in $X_\sigma$.
    
    Reciprocally, assume that such $\epsilon > 0$ does not exists.
    For each $\epsilon = \frac{1}{n} > 0$ it gives $x^{(n)}$ and $y^{(n)} \in X_\sigma$, $0 \leq p_n< \abs{\tau(x^{(n)}_0)}$
    and $0 \leq q_n < \abs{\tau(y^{(n)}_0)}$ such that
    $d(S^{p_n} \tau(x^{(n)}), S^{q_n} \tau(y^{(n)})) < 1/n$ and ($p_n \neq q_n$ or $x^{(n)}_0 \neq y^{(n)}_0$).
    Then, by compacity there exists subsequences $x^{(\phi(n))}$ and $y^{(\phi(n))}$ that converges and
    such that $x_0^{(\phi(n))}$, $y_0^{(\phi(n))}$, $p_{\phi(n)}$ and $q_{\phi(n)}$ are constant to $a$, $b$, $p$ and $q$ respectively, with either $a \neq b$ or $p \neq q$. 
    Let $x = \lim_{n \to \infty} x^{(\phi(n))}$ and $y = \lim_{n \to \infty} y^{(\phi(n))}$.
    We have $x,y \in X_\sigma$ since $X_\sigma$ is compact. Furthermore $S^p \tau(x) = S^q \tau(y)$ and ($p \neq q$ or $x_0 \neq y_0$) thus $\tau$ is not recognizable in $X_\sigma$.
\end{proof}

\section{A criterion for weak mixing} \label{sec:wm}

We use our description of the set of eigenvalues to deduce a characterization of the dimension of the set of eigenvalues.
Moreover, we provide a necessary and sufficient condition ensuring weak mixing,
for primitive unimodular substitutions without coboundary.
First, we need to introduce some notations.



Let $\B$ be a set of algebraic integers.
We denote $\gal(\B)$ the Galois group of $\K_\B : \Q$,
where $\K_\B$ is the splitting field of the product of minimal polynomials of elements of $\B$.

\begin{define}
    Let $\B$ be a set of algebraic integers.
    We define the \defi{graph} of $\B$ by
    \begin{itemize}
        \item vertices that are elements of $\B$ and their conjugates,
        \item edges between $\gamma$ and $\delta$ if and only if there exists $f \in \gal(\B)$
            such that $f(\gamma) \in \B$ and $f(\delta) \in \B$.
    \end{itemize}
    We say that $\B$ is \defi{connected} if its graph is connected.
\end{define}

Note that if $\beta$ has degree $d$ and $\gal(\{\beta\})$ contains a $d$-cycle $f$, then $\{\beta, f(\beta)\}$ is connected.
For example, if the degree of $\beta$ is a prime number and $\gamma$ is a conjugate of $\beta$, then $\{\beta, \gamma\}$ is connected.

Note that if $\B$ contains more than half of the conjugates of an algebraic integer, then
it is connected. For example, if $\beta$ is a Salem number, its set of conjugates of modulus $\geq 1$ is connected.

Note that if $\B$ is connected, then any set of algebraic integers containing $\B$ is connected.

\begin{thm}[Criterion for weak mixing] \label{crit:wm}
    Let $\sigma$ be a primitive aperiodic unimodular substitution with no non-trivial coboundary.
    Let $\tau$ be a word morphism recognizable in $X_\sigma$, and such that $\gcd((1...1)M_\tau) = 1$.
    Let $\B$ be the set of eigenvalues of $M_\sigma$ of modulus greater than or equal to $1$
    associated with at least one generalized eigenvector $v$ with $(1...1)M_\tau v \neq 0$.
    Then, the subshift $(X_{\tau \sigma^\omega},S)$ is weakly mixing if and only if $\B$ is connected.
\end{thm}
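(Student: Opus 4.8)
The plan is to reduce the weak mixing question to a statement about the $\Q$-vector space of eigenvalues, using the characterization already established in Theorem~\ref{thm:car:eigs} and Proposition~\ref{prop:car}. Recall that a minimal subshift is weakly mixing if and only if its only (additive) eigenvalue is $0 \in \R/\Z$, equivalently if the $\Q$-vector space spanned by eigenvalues is trivial. So the whole theorem amounts to showing that this $\Q$-vector space is nontrivial precisely when $\B$ fails to be connected. Under the hypotheses, $\sigma$ is unimodular with no non-trivial coboundary, so the coboundary space $R^\circ$ reduces to $\{0\}$ and the matrix $C$ from Theorem~\ref{thm:car:eigs} is just the single row $(1...1)M_\tau$. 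Moreover unimodularity gives $\Delta_{M_\sigma} = \Z^{\A_\sigma}$ modulo the generalized $0$-eigenspace $E_0$, and since $E_0 V = 0$ the set of eigenvalues becomes $(W \cap \Z^{\A_\sigma}) v$, where now $W = \Q^{\A_\sigma} \cap (\Span(V))^{\circ\circ}$ after intersecting with the span of generalized eigenvectors for eigenvalues of modulus $\geq 1$.

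First I would translate the condition $t \neq 0$ into an arithmetic condition on the vectors $w \in \Z^{\A_\sigma}$ satisfying the relation from Proposition~\ref{prop:car}. Writing $\ell = (1...1)M_\tau$, the relation $(t\,\ell - w)V = 0$ says that $w$ and $t\ell$ agree on every generalized eigenspace for eigenvalues of modulus $\geq 1$; since there is no coboundary term, $t = \ell v = w v$ is forced, where $v$ is the chosen eigenvector normalized by $\ell v = 1$. The key observation is that the integer vector $w$, viewed through its pairings $w v_\gamma$ with eigenvectors $v_\gamma$ for the various eigenvalues $\gamma \in \B$ and their Galois images, must be compatible with the Galois action: because $w$ has rational (indeed integer) entries, applying any $f \in \gal(\B)$ permutes the eigenvalue/eigenvector data and sends $w v_\gamma$ to $w\, f(v_\gamma) = w v_{f(\gamma)}$. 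Thus the existence of a nonzero eigenvalue $t = wv$ is equivalent to the existence of an integer vector $w$ whose Galois-orbit of pairings is nonzero only on the component $\gamma$ with $\ell v_\gamma \neq 0$, while the constraint $(t\ell - w)V = 0$ ties together all conjugates appearing in $\B$.

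The heart of the argument, and the step I expect to be the main obstacle, is the precise equivalence between solvability of this linear-algebraic system over $\Z$ and connectedness of the graph of $\B$. I would argue as follows. Connectedness of $\B$ means that the conjugates in $\B$ cannot be split into two Galois-incompatible clusters; an edge between $\gamma$ and $\delta$ records a single $f \in \gal(\B)$ carrying both into $\B$ simultaneously, which is exactly the algebraic obstruction preventing one from independently prescribing the values $w v_\gamma$ and $w v_\delta$. If $\B$ is disconnected, I would exhibit a nonzero rational idempotent in the relevant semisimple algebra (the product of number fields $\prod \Q(\gamma)$ cut out by the connected components) which, after clearing denominators, yields an integer vector $w$ producing a nonzero eigenvalue $t \neq 0$, so the system is \emph{not} weakly mixing. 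Conversely, if $\B$ is connected, any integer $w$ solving the system must have all its $\B$-pairings lying in a single Galois orbit forced to be constant, and the normalization together with the hypothesis $\gcd(\ell) = 1$ (which guarantees no spurious integer scaling shrinks the lattice) would force $t \in \Z$, i.e. $t = 0$ in $\R/\Z$, giving weak mixing. The delicate points are verifying that the Galois action genuinely identifies the eigenvector pairings across an edge (this uses that $M_\sigma$ has rational, hence Galois-stable, entries so that conjugate eigenvalues have conjugate eigenvectors), and that the $\gcd$ hypothesis is exactly what is needed to rule out the fractional eigenvalues that would otherwise appear; I would treat the connected case by a clean rank/field-of-definition computation and the disconnected case by explicit construction of $w$.
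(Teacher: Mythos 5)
Your plan — reduce to the arithmetic of the set $(W\cap\Z^{\A_\sigma})v$ from Theorem~\ref{thm:car:eigs} and then prove ``connected iff all eigenvalues integral'' by a Galois argument — is in the right spirit, and it is close to what the paper actually does; the difference is that the paper isolates the Galois computation once and for all in Theorem~\ref{thm:dim:eigs} (the $\Q$-dimension of the span of eigenvalues equals the number of connected components of the graph of $\B$) together with Lemma~\ref{lem:W:Wv}, and then the criterion follows in a few lines: unimodularity gives eigenvalues $=(W\cap\Z^{\A})v_0$, connectedness gives $\dim_\Q W=1$, and $\gcd((1...1)M_\tau)=1$ gives $W\cap\Z^\A=\Z(1...1)M_\tau$, hence eigenvalue set $\Z$; conversely weak mixing forces the span to have dimension $1$, i.e.\ one component.

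However, the step you yourself identify as the heart of the argument rests on a misreading of the graph of $\B$, and this is a genuine gap. Since the identity belongs to $\gal(\B)$, any two elements of $\B$ are joined by an edge: $\B$ is always a clique, hence always contained in a \emph{single} connected component. So the conjugates in $\B$ can never be ``split into two Galois-incompatible clusters''; disconnectedness is about conjugates \emph{outside} $\B$ failing to link back to that clique (see the example with $\B=\{\rho,\rho+1\}$ and three components). Correspondingly, membership in $W$ forces $w v_\gamma$ to take one and the same value $t$ for \emph{every} $\gamma\in\B$ simultaneously (the defining relation is $(t(1...1)M_\tau-w)V=0$, the coboundary term being zero here), so your proposed idempotent $w$, whose pairings are nonzero only on one component, does not lie in $W$ at all. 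The extra dimensions of $W$ in the disconnected case come instead from the Galois-equivariant, componentwise-constant values $w v_{\gamma'}$ on the components disjoint from $\B$ — this is exactly the count $\dim(GD)=\abs{\B'}-k$ in the paper. Moreover your disconnected direction aims at the wrong target: a nonzero eigenvalue always exists ($w=(1...1)M_\tau$ gives $t=1$), and ``clearing denominators'' multiplies $t$ by an integer, which destroys non-integrality of any rational $t$; to contradict weak mixing you must produce an \emph{irrational} eigenvalue, which is what $\dim_\Q(Wv)=k\geq 2$ supplies via Lemma~\ref{lem:W:Wv} (using $E_0+R^\circ=0$ under the unimodular, no-coboundary hypotheses) — a fact your sketch never establishes. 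Relatedly, the opening equivalence ``weakly mixing iff the $\Q$-span of eigenvalues is trivial'' is off: the span always contains $\Q$, and the correct statement, which the gcd hypothesis then upgrades to integrality, is that it has dimension $1$.
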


This criterion is a consequence of the following theorem that characterizes the dimension of the $\Q$-vector space spanned by eigenvalues of the subshift.

\begin{thm} \label{thm:dim:eigs}
    Let $\sigma$ be a primitive aperiodic substitution, and let $\tau$ be a word morphism recognizable in $X_\sigma$.
    Let $\pi_0 \pi_1 ... \pi_n$ be the decomposition of the minimal polynomial of $M_\sigma$ with
    $\pi_0$ the product of all cyclotomic factors, and with $\pi_i$, $1 \leq i \leq n$,
    the remaining powers of irreducible factors in $\Q$.
    Let
    \[
        \B = \{\beta \in \overline{\Q} \mid \exists i \in \{0,...,n\},\ (1...1) M_\tau G_i \neq \{0\},\ 
        \pi_i(\beta) = 0 \text{ and } \abs{\beta} \geq 1\},
    \]
    where $G_0 = R(X_\sigma) \cap \ker \pi_0(M_\sigma)$, 
    and $\forall i \geq 1$, $G_i = \ker \pi_i(M_\sigma)$.
    
    Then, the dimension of the $\Q$-vector space spanned by the eigenvalues of the subshift $(X_{\tau \sigma^\omega},S)$
    is the number of connected components of the graph of $\B$.
\end{thm}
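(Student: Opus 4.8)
The plan is to reduce the statement to a single dimension computation for the $\Q$-vector space $Wv$ from Theorem~\ref{thm:car:eigs}, and then to translate that dimension into the component count of the graph of $\B$ by a Galois-theoretic argument. First I would record that the $\Q$-span of the eigenvalue set $(W\cap\Delta_{M_\sigma})v$ is exactly $Wv$: since $\Z^{\A_\sigma}\subseteq\Delta_{M_\sigma}$ and $W$ is a $\Q$-vector space, every $w\in W$ becomes integral after clearing denominators, so $\Span(W\cap\Delta_{M_\sigma})=W$, and applying the $\Q$-linear map $w\mapsto wv$ gives that the $\Q$-span of the eigenvalues equals $Wv$. It therefore suffices to show $\dim_\Q Wv$ equals the number of connected components of the graph of $\B$.

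Next I would decompose the dual of $\R^{\A_\sigma}$ into generalized left-eigenspaces grouped by the factors $\pi_i$, and restrict the defining constraint of $W$ (namely that $w$ restricted to $\Span(V)$ lies in the row space of $CV$) to each block. Writing $\mu=(1...1)M_\tau$, the constraint reads: $w$ agrees with a combination $x\mu$ plus a coboundary on the eigenspaces of modulus $\geq 1$. By Lemma~\ref{lem:cob:unity} the coboundary space $R^\circ$ lies in the left-eigenspaces for roots of unity and $0$, so on each noncyclotomic block $G_i=\ker\pi_i(M_\sigma)$ the coboundary rows vanish and only the row $\mu|_{G_i}$ survives, while on the cyclotomic block the coboundaries kill exactly the directions outside $R$; this is precisely why the relevant object is $G_0=R\cap\ker\pi_0(M_\sigma)$ and why the contributing condition is $(1...1)M_\tau G_i\neq\{0\}$. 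Since $v$ is annihilated by coboundaries and normalized by $\mu v=1$, for every $w\in W$ one gets $wv=x$ for the common scalar $x$, so $Wv$ is the $\Q$-subspace of $\K_\B$ of all such realizable $x$.

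I would then run the Galois argument. Rationality of $w$ means $g(w)=w$ for all $g\in\gal(\B)$, whence the component of $w$ dual to the eigenvalue $\lambda$ is carried by $g$ to the component dual to $g\lambda$. Combining this with the constraint on the eigenspaces of modulus $\geq 1$ forces $g(x)=x$ whenever $g$ maps a $\B$-element to a $\B$-element (necessarily inside one irreducible factor), and conversely any $x$ fixed by $N:=\langle g\in\gal(\B) : g\B\cap\B\neq\emptyset\rangle$ is realized by solving for the free low-eigenspace components and using that $\gal(\B)$ permutes the factors blockwise. Hence $Wv=\K_\B^{\,N}$ and $\dim_\Q Wv=[\gal(\B):N]$.

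The last and hardest step is the purely combinatorial identity $[\gal(\B):N]=\#\{\text{components}\}$. Presenting vertices as cosets and $\tilde\B$ as the union of cosets corresponding to $\B$, an edge $\gamma\sim\delta$ translates into $\gamma^{-1}\delta\in\tilde\B^{-1}\tilde\B$, while the group generated by these coincides with $N=\langle\tilde\B\rangle$ because $\tilde\B$ contains the Perron coset; thus components biject with cosets of $N$. The main obstacle I anticipate is carrying out this bookkeeping uniformly when $\B$ meets several irreducible factors, so that the vertex set splits into several $\gal(\B)$-orbits: cross-factor edges come only from $f=\id$ and are automatically consistent with the clique formed by $\B$, but verifying that this does not create or destroy components, together with the cyclotomic subtlety attached to $G_0$ and the achievability (surjectivity onto the fixed field $\K_\B^{\,N}$), is where the delicate work lies, the remainder being the routine fixed-field dimension count.
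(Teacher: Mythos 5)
Your proposal is correct in substance, but it follows a genuinely different route from the paper. The paper never identifies $Wv$ with a fixed field: it computes $\dim_\Q W$ by Galois-saturating the constraint space --- writing $H \cap \K R \cap \Span_\K(V)$ as a direct sum $(\K R\cap F_u) \oplus (F_1\cap H)\oplus\dots\oplus(F_n \cap H) \oplus D$ with $D = \Span_\K\{v_\beta - v_\gamma \mid \beta,\gamma\in\B\}$, proving $\dim_\K(G D) = \abs{\B'} - k$ directly, and converting $\K$-dimensions of Galois-stable spaces into $\Q$-dimensions via Proposition~\ref{prop:galois:inv} --- and then subtracts $\dim_\Q(E_0 + R^\circ)$, which Lemma~\ref{lem:W:Wv} identifies as the kernel of $w \mapsto wv$ on $W$; the cyclotomic situation $\K R \cap F_u \not\subset H$ is handled by a separate case. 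You instead prove $Wv = \K_\B^{\,N}$ with $N = \langle g \in \gal(\B) \mid g\B\cap\B \neq \emptyset\rangle$ and reduce the theorem to the coset identity $[\gal(\B):N] = k$. Your route buys more than the paper's: it exhibits the $\Q$-span of the eigenvalues as literally a subfield of $\K_\B$, which is stronger structural information (it explains, e.g., why the span is $\Q(\sqrt2)$ in the Ferenczi--Mauduit--Nogueira example), and your first reduction $\Span_\Q\bigl((W\cap\Delta_{M_\sigma})v\bigr) = Wv$ is argued more carefully than in the paper. What the paper's route buys is that it is a pure dimension count: it never has to carry out your surjectivity step, the realization of every $x \in \K_\B^{\,N}$ by a rational $w \in W$, which is the real work in your approach and which you only sketch.

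Two spots in your sketch need repair. First, ``cross-factor edges come only from $f=\id$'' is false: any $f$ sending both endpoints into $\B$ produces a cross-factor edge. This is harmless --- after basing each Galois orbit at a point of $\B$, all such edges give generators lying in $N = \langle \bigcup_j \tilde\B_j\rangle$, and the identity clique on $\B$ is exactly what lets you reduce the mixed generators $\tilde\B_i^{-1}\tilde\B_j$ to this form --- but it should be stated correctly. Second, both inclusions of $Wv = \K_\B^{\,N}$ require the Galois-coherent normalized family $(v_\beta)_{\beta\in\B'}$ (which the paper constructs explicitly), together with the observation that $(1...1)M_\tau F_\beta \neq \{0\}$ for \emph{every} root $\beta$ of a contributing non-cyclotomic factor, by transitivity of the Galois action on the roots of the irreducible factor and rationality of $(1...1)M_\tau$ and $G_i$. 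At the cyclotomic block this transitivity fails across distinct cyclotomic factors: $\B$ contains all modulus-$\geq 1$ roots of $\pi_0$ as soon as $(1...1)M_\tau G_0 \neq \{0\}$, including roots of unity whose individual eigencomponent of $\K G_0$ does not pair with $(1...1)M_\tau$, so no admissible $v_\beta$ exists for them; your argument must route through a root of unity that does admit an admissible vector in $\K G_0$, which forces $N = \gal(\B)$ and $Wv = \Q$ there --- recovering, inside your framework, exactly the rational case the paper treats separately.
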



\begin{proof} [Proof of Theorem~\ref{crit:wm}]
    Assume that $\B$ is connected.
    Since $\sigma$ is unimodular, the set of eigenvalues of the subshift is of the form $(W \cap \Z^\A) v_0$,
    where $v_0$ is the Perron eigenvector normalized such that $(1...1)M_\tau v_0 = 1$.
    By Theorem~\ref{thm:dim:eigs} and Lemma~\ref{lem:W:Wv}, the dimension of the $\Q$-vector space $W$ is $1$.
    Thanks to the hypothesis $\gcd((1...1)M_\tau) = 1$, we deduce that $W \cap \Z^\A = \Z (1...1)M_\tau$.
    Thus the set of eigenvalues of the subshift is $\Z$.
    
    Reciprocally, if the subshift is weakly mixing,
    then the $\Q$-vector space spanned by its eigenvalues has dimension $1$,
    thus by Theorem~\ref{thm:dim:eigs}, $\B$ is connected.
\end{proof}

\subsection{Proof of Theorem~\ref{thm:dim:eigs}}

This subsection is devoted to the proof of Theorem~\ref{thm:dim:eigs}.
We need the following proposition, which is a particular case of Proposition~16.7 in~\cite{Milne2015}.

\begin{prop} \label{prop:galois:inv}
    Let $F$ be a linear subspace of $\K^\A$ stable by the Galois group $G = \gal(\K:\Q)$.
    Then, $F = \K (F \cap \Q^\A)$. 
\end{prop}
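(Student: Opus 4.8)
The plan is to recognize this as a standard instance of Galois descent. Here $\K/\Q$ is a finite Galois extension with group $G$, its fixed field is $\K^G = \Q$, and $G$ acts on $\K^\A$ coordinate-wise, so that $(\K^\A)^G = \Q^\A$. Consequently $F \cap \Q^\A = F^G$ is precisely the space of $G$-fixed vectors of $F$, and the assertion $F = \K(F \cap \Q^\A)$ says exactly that the $G$-stable subspace $F$ is spanned over $\K$ by its rational vectors, i.e.\ that $F$ is defined over $\Q$.

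One inclusion is immediate: since $F \cap \Q^\A \subseteq F$ and $F$ is a $\K$-subspace, $\K(F \cap \Q^\A) \subseteq F$. The content lies in the reverse inclusion $F \subseteq \K(F \cap \Q^\A)$. For this I would fix a $\Q$-basis $c_1, \dots, c_m$ of $\K$, where $m = [\K:\Q] = |G|$, and for an arbitrary $v \in F$ form the averaged vectors
\[
    w_i \;=\; \sum_{g \in G} g(c_i\, v) \;=\; \sum_{g \in G} g(c_i)\, g(v), \qquad 1 \le i \le m.
\]
Because $F$ is a $\K$-subspace each $c_i v$ lies in $F$, and because $F$ is $G$-stable each $g(c_i v)$ lies in $F$; being a sum over the whole group, $w_i$ is $G$-invariant, hence $w_i \in F \cap \Q^\A$. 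It then remains to recover $v$ from the $w_i$. Writing the relations as a single (block) matrix identity $(w_i)_i = M\,(g(v))_g$ with $M = (g(c_i))_{i,g}$, I would argue that $M$ is invertible over $\K$, so that the component indexed by the identity of $\,(g(v))_g = M^{-1}(w_i)_i$ expresses $v$ as a $\K$-linear combination of $w_1, \dots, w_m \in F \cap \Q^\A$, giving $v \in \K(F \cap \Q^\A)$.

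The key step, and the only real obstacle, is the invertibility of the character matrix $M = (g(c_i))_{i,g}$. The cleanest way is to observe that $M M^{\mathsf{T}}$ has $(i,j)$-entry $\sum_{g \in G} g(c_i)\,g(c_j) = \sum_{g \in G} g(c_i c_j) = \operatorname{Tr}_{\K/\Q}(c_i c_j)$, the Gram matrix of the trace form in the basis $(c_i)$. Since $\K/\Q$ is separable this trace form is nondegenerate, so its determinant (the discriminant) is nonzero, whence $\det(M)^2 \neq 0$ and $M$ is invertible. Equivalently, invertibility follows from Dedekind's lemma on the linear independence of the distinct automorphisms of $G$ as maps $\K \to \K$. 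As this is exactly the content of the cited Proposition~16.7 in~\cite{Milne2015}, one may alternatively just invoke that reference; but the averaging argument above makes the statement self-contained.
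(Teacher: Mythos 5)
Your proof is correct. Note that the paper itself gives no argument for this proposition: it simply cites Proposition~16.7 of Milne's notes, which is exactly the Galois descent statement you prove. Your averaging argument is the standard proof of that cited result, and every step checks out: $c_i v \in F$ because $F$ is a $\K$-subspace, $g(c_i v) \in F$ because $F$ is $G$-stable, the full-group sum makes each $w_i$ fixed by $G$ coordinate-wise, and $(\K^\A)^G = \Q^\A$ since $\K/\Q$ is Galois (here $\K$ is a splitting field over $\Q$, so this hypothesis is satisfied in the paper's application). The only nontrivial point, invertibility of $M = (g(c_i))_{i,g}$, is handled correctly: $M M^{\mathsf T}$ is the Gram matrix $\bigl(\operatorname{Tr}_{\K/\Q}(c_i c_j)\bigr)_{i,j}$ of the trace form, which is nondegenerate in characteristic zero, so $\det(M)^2 \neq 0$; the alternative via Dedekind's independence of characters is equally valid. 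What your route buys over the paper's is self-containedness: the reader need not unwind the (semilinear) descent formalism of the reference, and your argument even yields the explicit expression of $v$ as a $\K$-linear combination of the rational vectors $w_1, \dots, w_m$, which the bare citation does not.
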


    By Theorem~\ref{thm:car:eigs}, the $\Q$-vector space spanned by eigenvalues of $(X_{\tau\sigma^\omega}, S)$ is $W v$,
    where $v$ is the Perron eigenvector of sum $1$, and
    \[
        W = \Q^\A \cap (H \cap \K R(X_\sigma) \cap \Span_\K(V))^\circ,
    \]
    where $H = ((1...1)M_\tau)^\circ$ and $\K$ is the splitting field of the minimal polynomial of $M_\sigma$.
    
    \begin{lem} \label{lem:W:Wv}
        $\dim_\Q(W v) + \dim_\Q (E_0 + R^\circ) = \dim_\Q(W)$,
        where $E_0$ is the left generalized eigenspace of $M_\sigma$
        for the eigenvalue $0$ and $R = R(X_\sigma)$. 
    \end{lem}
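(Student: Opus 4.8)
The plan is to apply the rank--nullity theorem to the $\Q$-linear evaluation map
\[
    \phi : W \longrightarrow \R, \qquad \phi(w) = w v .
\]
Its image is exactly $W v$, so $\dim_\Q W = \dim_\Q(W v) + \dim_\Q \ker \phi$, and the lemma is equivalent to the identity $\ker \phi = E_0 + R^\circ$. First I would record the facts about $v$ used throughout: since $v$ is a generalized eigenvector for an eigenvalue of modulus $\geq 1$ we have $v \in \Span_\K(V)$; since $v$ is annihilated by coboundaries we have $v \in \K R$ (i.e.\ $v \in (R^\circ)^\circ$); and since $(1...1)M_\tau v = 1 \neq 0$ we have $v \notin H$, where $H = ((1...1)M_\tau)^\circ$.

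Next I would compute $\ker \phi$ explicitly. Writing $U = H \cap \K R \cap \Span_\K(V)$, we have $W = \Q^\A \cap U^\circ$ and $\ker \phi = \Q^\A \cap (U + \K v)^\circ$. Because $v$ lies in $\K R \cap \Span_\K(V)$ but not in $H$, the functional $(1...1)M_\tau$ is non-zero on $\K R \cap \Span_\K(V)$, so $U$ is a hyperplane there and $U + \K v = \K R \cap \Span_\K(V)$. Using the finite-dimensional identity $(A \cap B)^\circ = A^\circ + B^\circ$ together with $(\K R)^\circ = \K R^\circ$ and $\Span_\K(V)^\circ = E_{<1}$ (the sum of left generalized eigenspaces for eigenvalues of modulus $<1$), I obtain
\[
    \ker \phi = \Q^\A \cap \left( \K R^\circ + E_{<1} \right).
\]

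The inclusion $E_0 + R^\circ \subseteq \ker \phi$ is then immediate: $R^\circ$ annihilates $\K R$, while $E_0$, a left eigenspace for $0$, annihilates $\Span_\K(V)$, and both spaces are rational. For the reverse inclusion I would argue componentwise along the generalized left-eigenspace decomposition $\K^\A = \bigoplus_\mu F_\mu$. Since $R^\circ M_\sigma \subseteq R^\circ$, the space $\K R^\circ$ splits as $\bigoplus_\mu (\K R^\circ \cap F_\mu)$, and by Lemma~\ref{lem:cob:unity} one has $\K R^\circ \cap F_\mu = 0$ unless $\mu = 0$ or $\mu$ is a root of unity. Hence a vector $w = \sum_\mu w_\mu$ of $\K R^\circ + E_{<1}$ satisfies $w_\mu \in F_\mu$ freely when $|\mu| < 1$, and $w_\mu \in \K R^\circ \cap F_\mu$ when $|\mu| \geq 1$, so $w_\mu = 0$ there unless $\mu$ is a root of unity.

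The key point, and the main obstacle, is to eliminate the components $w_\mu$ for eigenvalues with $0 < |\mu| < 1$, which are so far unconstrained. Here I would use rationality of $w$: if $w \in \Q^\A$ then $w_{g(\mu)} = g(w_\mu)$ for every $g \in \gal(\K : \Q)$, since $g$ permutes the $F_\mu$ according to its action on eigenvalues. Any such $\mu$ is a non-zero algebraic integer, hence, the product of its conjugates being a non-zero rational integer, it admits a conjugate $\nu = g(\mu)$ with $|\nu| \geq 1$; moreover $\nu$ cannot be a root of unity, for otherwise $\mu$ would be one, contradicting $|\mu| < 1$. Thus $w_\nu = 0$ and therefore $w_\mu = g^{-1}(w_\nu) = 0$. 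Consequently every $w \in \ker\phi$ has $w_\mu = 0$ for all non-root-of-unity $\mu$ and for all $\mu \neq 0$ with $|\mu| < 1$. Projecting onto the rational Galois-stable summand $F_0$ gives $w_0 \in E_0$, and the remaining part $w - w_0$ lies in $\K R^\circ \cap \Q^\A = R^\circ$; hence $w \in E_0 + R^\circ$. This yields $\ker \phi = E_0 + R^\circ$, and rank--nullity finishes the proof.
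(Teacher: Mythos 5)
Your proof is correct and follows essentially the same route as the paper: the same evaluation map $\varphi(w)=wv$ with rank--nullity, reducing the lemma to the identification $\ker\varphi = E_0 + R^\circ$, which both arguments obtain from Galois-invariance of rational vectors together with Lemma~\ref{lem:cob:unity}. The only difference is presentational: you dualize early and argue on the left generalized eigenspace components of $w$, making explicit the fact (hidden in the paper's ``the Galois group acts transitively on conjugates'') that every nonzero algebraic integer has a conjugate of modulus $\geq 1$, whereas the paper works directly with the annihilated eigenvectors $v'$ and their Galois images.
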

    
    \begin{proof}
        Let $\varphi : W \to W v$ be the linear map defined by $\varphi(w) = wv$.
        This map is onto. Let us show that $\ker(\varphi) = E_0 + R^\circ$.
        Let $w \in \ker(\varphi)$.
        Then, by definition of $W$ we have $w(v-v') = 0$ so $wv' = 0$,
        for every generalized eigenvector $v' \in R$ associated with an eigenvalue of modulus $\geq 1$
        normalized such that $(1...1)M_\tau v' = 1$.
        Then, since $w \in \Q^\A$, we also have $wv'' = 0$ for every image $v''$ of $v'$ by a Galois element.
        Moreover, the Galois group act transitively on conjugates and stabilize $R$.
        Thus, we have $w \in E_0 + R^\circ$. The other inclusion is obvious.
    \end{proof}
    
    We decompose the non-contracting space as $\Span(V) = F_u \oplus F_1 \oplus ... \oplus F_n$,
    where $F_u$ is the sum of every generalized eigenspace for an eigenvalue root of unity,
    and $F_1$, ..., $F_n$ are the remaining generalized eigenspaces associated with eigenvalues of modulus $\geq 1$.
    By Lemma~\ref{lem:cob:unity}, $\K R$ contains $F_1 \oplus ... \oplus F_n$, thus
    \[
        \K R \cap \Span(V) = (\K R \cap F_u) \oplus F_1 \oplus ... \oplus F_n.
    \]
    
    \begin{lem}
        If $\K R \cap F_u \not\subset H$, then eigenvalues of $(X_{\tau \sigma^\omega},S)$ are rational.
    \end{lem}
    
    \begin{proof}
        Since the set $\K R \cap F_u$ is invariant by the Galois group $\gal(\K:\Q)$, there exists
        a rational vector $v \in R \cap F_u$ such that $(1...1) M_\tau v = 1$.
        Then, the set of eigenvalues of $X_{\tau \sigma^\omega}$ is included in $W v = \Q$.
    \end{proof}
    
    If $\K R \cap F_u \not\subset H$, then $\B$ contains a root of unity $\beta$.
    But it also contains every conjugate of $\beta$, so $\B$ is connected, and we get the wanted equality.
    
    In the following, we assume that $\K R \cap F_u \subset H$.
    Let $\B'$ be the set of conjugates of elements of $\B$, including $\B$
    (i.e. it is the set of vertices of the graph of $\B$).
    We choose a set $(v_\beta)_{\beta \in \B'}$ of vectors such that for every $\beta \in \B'$, $v_\beta$ is a generalized eigenvector associated with $\beta$, with $(1...1)M_\tau v_\beta = 1$,
    and such that $\forall \beta, \gamma \in \B'$, $\varphi(v_\beta) = v_\gamma$
    where $\varphi : \Q(\beta) \to \Q(\gamma)$ is the morphism of fields sending $\beta$ to $\gamma$.
    
    Let $D = \Span_\K \{v_\beta - v_\gamma \mid \beta, \gamma \in \B\}$. We have
    \[
        H \cap \K R(X_\sigma) \cap \Span_\K(V) = (\K R(X_\sigma) \cap F_u)
                                \oplus (F_1 \cap H) \oplus ... \oplus (F_n \cap H)
                                \oplus D
    \]
    thanks to the following.
    
    \begin{lem}
        Let $\varphi : E \to \K$ be a linear form and let $(A_i)_{i \in I}$ be a finite family of subspaces of $E$.
        Then,
        \[
            \ker(\varphi) \cap \bigoplus_{i \in I} A_i = \left( \bigoplus_{i \in I} \ker(\varphi) \cap A_i \right)
                    \oplus \Span_\K \{v_i - v_j \mid i,j \in I\},
        \]
        where for all $i \in I$, $v_i \in A_i$ is such that $\varphi(v_i) = 1$ if $\varphi(A_i) \neq \{0\}$, and $v_i = 0$ otherwise.
    \end{lem}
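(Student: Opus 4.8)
The plan is to treat this as a pure linear-algebra statement about the restriction of a single linear form $\varphi$ to a direct sum, and to reduce everything to the one-dimensional lines $\K v_i$. First I would split the index set into the \emph{active} indices $J = \{i \in I \mid \varphi(A_i) \neq \{0\}\}$ and the inactive ones. For an inactive index $i$ one has $A_i \subseteq \ker(\varphi)$, hence $\ker(\varphi) \cap A_i = A_i$ and $v_i = 0$; such indices therefore contribute their whole summand to both sides and play no role in the difference span, so it suffices to analyse the active part, where the differences $v_i - v_j$ effectively range over $i, j \in J$.

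For each active $i$, since $\varphi(v_i) = 1 \neq 0$ and $\ker(\varphi) \cap A_i$ is a hyperplane of $A_i$, I would use the splitting
\[
    A_i = (\ker(\varphi) \cap A_i) \oplus \K v_i,
\]
writing $x_i = (x_i - \varphi(x_i) v_i) + \varphi(x_i) v_i$ for every $x_i \in A_i$. Summing over all $i$ and using that the family $(A_i)$ is in direct sum, this yields the global decomposition
\[
    \bigoplus_{i \in I} A_i = \Big( \bigoplus_{i \in I} \ker(\varphi) \cap A_i \Big) \oplus \bigoplus_{i \in J} \K v_i.
\]
On the first summand $\varphi$ vanishes identically, while on $\bigoplus_{i \in J} \K v_i$ it acts as the coordinate sum $\sum_{i \in J} c_i v_i \mapsto \sum_{i \in J} c_i$, because $\varphi(v_i) = 1$.

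From this the kernel splits as
\[
    \ker(\varphi) \cap \bigoplus_{i \in I} A_i = \Big( \bigoplus_{i \in I} \ker(\varphi) \cap A_i \Big) \oplus \Big\{ \textstyle\sum_{i \in J} c_i v_i \ \Big|\ \sum_{i \in J} c_i = 0 \Big\},
\]
the point being that any element of the left-hand side has its $\ker(\varphi) \cap A_i$ components automatically in the kernel, so the constraint $\varphi = 0$ only concerns the $\K v_i$ components. It then remains to identify the zero-sum subspace with $\Span\{v_i - v_j\}$: the vectors $(v_i)_{i \in J}$ are linearly independent (they are nonzero and lie in distinct summands of a direct sum), so the sum functional on $\bigoplus_{i \in J} \K v_i$ has kernel exactly $\Span\{v_i - v_j \mid i, j \in J\}$, by the usual fact that $\{(c_i) \mid \sum c_i = 0\}$ is spanned by the $e_i - e_j$.

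I expect the only points requiring care to be bookkeeping rather than genuine difficulty: namely checking that the displayed sums are genuinely \emph{direct} (which follows from $A_i = (\ker(\varphi)\cap A_i)\oplus \K v_i$ together with the directness of $(A_i)$), and keeping track of the inactive indices so that the difference span is taken over the active set $J$ — equivalently, so that a term $v_i - v_j$ with $v_j = 0$ does not spuriously enlarge the right-hand side. Once the direct-sum decomposition above is established, the equality of the two sides is immediate.
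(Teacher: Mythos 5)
Your proof is correct and takes essentially the same route as the paper's: both rest on the splitting $A_i = (\ker(\varphi) \cap A_i) \oplus \K v_i$ for active indices, the paper realizing the final identification by a basis exchange ($\{v_i\}_{i \in J} \rightsquigarrow \{v_k\} \cup \{v_i - v_k\}_{i \in J \setminus \{k\}}$ for a fixed $k \in J$) where you instead describe the kernel of the coordinate-sum functional on $\bigoplus_{i \in J} \K v_i$. Your closing caveat about taking the difference span over the active set $J$ is well taken: the paper's own proof likewise only uses differences $v_i - v_k$ with $i, k \in J$, which is the intended reading of the statement, since a term $v_i - v_j$ with $i$ active and $j$ inactive would otherwise place $v_i \notin \ker(\varphi)$ in the right-hand side.
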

    
    \begin{proof}
        For each $i \in I$, let $B_i$ be a basis of $\ker(\varphi) \cap A_i$.
        Let $J = \{i \in I \mid \varphi(A_i) \neq \{0\}\}$.
        If $J = \emptyset$, then the result is obvious.
        Let $k \in J$.
        Since $\bigcup_{i \in I} B_i \cup \bigcup_{i \in J} \{v_i\}$ is a basis of $\bigoplus_{i \in I} A_i$,
        the family $\{v_k\} \cup \bigcup_{i \in I} B_i \cup \bigcup_{i \in J \backslash \{k\}} \{v_i - v_k\}$
        is also a basis of $\bigoplus_{i \in I} A_i$.
        Thus, $\bigcup_{i \in I} B_i \cup \bigcup_{i \in J \backslash \{k\}} \{v_i - v_k\}$ is a basis of $\ker(\varphi) \cap \bigoplus_{i \in I} A_i$ and $\bigcup_{i \in J \backslash \{k\}} \{v_i - v_k\}$ is a basis of $\Span_\K \{v_i - v_j \mid i,j \in I\}$.
    \end{proof}
    
    Let $G = \gal(\K:\Q)$.
    
    \begin{lem}
        Let $E$ be a subspace of $\K^n$.
        Then,
        \[
            \dim_\Q (\Q^n \cap E^\circ) = n - \dim_\K (G E) = n - \dim_\Q(\Q^n \cap G E).
        \]
    \end{lem}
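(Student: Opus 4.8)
The plan is to reduce everything to the $G$-stable subspace $GE := \Span_\K \bigcup_{g \in G} g(E)$, on which Proposition~\ref{prop:galois:inv} applies directly. First I would establish the key identity
\[
    \Q^n \cap E^\circ = \Q^n \cap (GE)^\circ.
\]
The inclusion $\supseteq$ is immediate, since $E \subseteq GE$ forces $(GE)^\circ \subseteq E^\circ$. For $\subseteq$, take a row vector $w \in \Q^n$ annihilating $E$ and view it as a linear form. As $w$ has rational coefficients it is fixed by $G$, so for $v \in E$ and $g \in G$ one has $g(w) \cdot g(v) = g(w \cdot v) = 0$, hence $w \cdot g(v) = 0$; thus $w$ annihilates every $g(E)$ and therefore all of $GE$. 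This is the only place where the rationality of $w$ is used, and it is the conceptual heart of the statement.

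Next I would treat the right-hand equality. The subspace $GE$ is $G$-stable by construction, so Proposition~\ref{prop:galois:inv} gives $GE = \K(GE \cap \Q^n)$, whence a $\Q$-basis of $GE \cap \Q^n$ is a $\K$-basis of $GE$ and
\[
    \dim_\K(GE) = \dim_\Q(\Q^n \cap GE).
\]
Subtracting from $n$ yields $n - \dim_\K(GE) = n - \dim_\Q(\Q^n \cap GE)$.

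For the left-hand equality I would apply the same proposition to the annihilator $(GE)^\circ$, after checking that it too is $G$-stable: if $w \in (GE)^\circ$ and $g \in G$, then for $v \in GE$ we have $g(w) \cdot v = g(w \cdot g^{-1}(v)) = 0$, since $g^{-1}(v) \in GE$ by $G$-stability. Thus Proposition~\ref{prop:galois:inv} gives $(GE)^\circ = \K((GE)^\circ \cap \Q^n)$, and with standard linear duality $\dim_\K (GE)^\circ = n - \dim_\K(GE)$ one obtains
\[
    \dim_\Q(\Q^n \cap (GE)^\circ) = \dim_\K((GE)^\circ) = n - \dim_\K(GE).
\]
Combining this with the identity of the first paragraph gives $\dim_\Q(\Q^n \cap E^\circ) = \dim_\Q(\Q^n \cap (GE)^\circ) = n - \dim_\K(GE)$.

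The computations are entirely routine once these structural facts are in place; the only genuine obstacle is the bookkeeping of the Galois action on the dual space, namely that annihilators of $G$-stable subspaces are again $G$-stable and that rational forms are $G$-fixed, so that both applications of Proposition~\ref{prop:galois:inv} are legitimate. No input beyond that proposition and standard linear duality is needed.
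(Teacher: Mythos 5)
Your proof is correct and is exactly the argument the paper intends: the paper's proof consists of the single line ``It is a consequence of Proposition~\ref{prop:galois:inv}'', and your write-up supplies precisely the missing details --- that rational forms annihilating $E$ annihilate all of $GE$, that $GE$ and $(GE)^\circ$ are $G$-stable, and two applications of the proposition together with the fact that $\Q$-independent rational vectors stay $\K$-independent. No gap; your version is simply the fleshed-out form of the paper's one-line proof.
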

    
    \begin{proof}
        It is a consequence of Proposition~\ref{prop:galois:inv}.
    \end{proof}
    
    Hence, to find the dimension of $W$, we have to find the dimension of $G (H \cap R(X_\sigma) \cap \Span_\K(V))$.
    For every $i \in \{1,...,n\}$ and every $g \in G$, $g(E_i \cap H) = g (E_i) \cap H$ since $H$ is stable by the Galois group $G$, thus
    \[
        G(H \cap \K R(X_\sigma) \cap \Span_\K(V)) = (\K R(X_\sigma) \cap F_u) \oplus \left( \sum_{g \in G} \sum_{i=1}^n H \cap g(F_i) \right) \oplus G D.
    \]
    Note that each $\K G_i$ is equal to some $\sum_{g \in G} g(F_j)$, and that $(1...1)M_\tau G_i = \{0\} \Longleftrightarrow (1...1)M_\tau F_j = \{0\}$.
    We can decompose the sum as a direct sum
    \[
        \sum_{g \in G} \sum_{i=1}^n H \cap g F_i = \left( \bigoplus_{\beta \in \B'} H \cap F_{\beta} \right)
            \oplus \bigoplus_{\beta \in \CCC} F_{\beta}
    \]
    whose dimension is $\abs{\A} - \abs{\B'} - \dim(F_u) - \dim(F_0)$, where $\CCC$ is the set of eigenvalues of $M_\sigma$
    not root of unity, not zero and not in $\B'$ and $F_\beta$ is the generalized eigenspace associated with $\beta$.
    Indeed, for every $\beta \in \B'$, $\dim(H \cap F_\beta) = \dim(F_\beta)-1$.
    
    \begin{lem}
        $\dim(G D) = \abs{\B'} - k$ where $k$ is the number of connected components of the graph of $\B$.
    \end{lem}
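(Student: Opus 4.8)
The plan is to reduce the computation of $\dim_\K(GD)$ to the rank of the incidence structure of the graph of $\B$. The key input is the Galois-equivariance built into the chosen vectors: since $v_\beta$ was selected so that $\varphi(v_\beta) = v_\gamma$ whenever $\varphi : \Q(\beta) \to \Q(\gamma)$ is the field morphism sending $\beta$ to $\gamma$, every $g \in G$ satisfies $g(v_\beta) = v_{g(\beta)}$, because the restriction of $g$ to $\Q(\beta)$ is exactly such a morphism. Consequently, for $\beta, \gamma \in \B$,
\[
    g(v_\beta - v_\gamma) = v_{g(\beta)} - v_{g(\gamma)},
\]
so that $GD = \Span_\K \{ v_{g(\beta)} - v_{g(\gamma)} \mid g \in G,\ \beta,\gamma \in \B \}$.

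First I would identify these generators with the edge-differences of the graph of $\B$. By definition a pair $\{\alpha, \alpha'\}$ of vertices is an edge precisely when some $f$ maps both $\alpha$ and $\alpha'$ into $\B$; writing $\beta = f(\alpha)$, $\gamma = f(\alpha')$ and $g = f^{-1}$ then gives $v_\alpha - v_{\alpha'} = g(v_\beta - v_\gamma) \in GD$, and conversely each generator $v_{g(\beta)} - v_{g(\gamma)}$ is the difference attached to the edge $\{g(\beta), g(\gamma)\}$, which is an edge as witnessed by $f = g^{-1}$. Hence
\[
    GD = \Span_\K \{ v_\alpha - v_{\alpha'} \mid \{\alpha, \alpha'\} \text{ is an edge of the graph of } \B \}.
\]

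Next I would note that the vectors $v_\alpha$, $\alpha \in \B'$, are generalized eigenvectors of $M_\sigma$ for the pairwise distinct eigenvalues $\alpha$, hence $\K$-linearly independent; they thus form a basis of their span, which I identify with $\K^{\B'}$. Under this identification $GD$ becomes the subspace spanned by all edge-differences of the graph, and it remains to invoke the classical fact that, in $\K^V$ with its standard basis, the span of $\{ e_u - e_v \mid \{u,v\} \in E \}$ equals the space of vectors whose coordinates sum to zero on each connected component, and hence has dimension $\abs{V} - c$, where $c$ is the number of connected components. (One proves this by taking a spanning tree of each component: its $\abs{C}-1$ edge-differences are independent and span every difference inside that component.) Applying this with $V = \B'$ and $c = k$ gives $\dim_\K(GD) = \abs{\B'} - k$.

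The only genuinely delicate point is the compatibility of the two group actions used: the edges are defined through $\gal(\B) = \gal(\K_\B : \Q)$, whereas the generators of $GD$ involve $G = \gal(\K : \Q)$. These agree because the restriction map $G \to \gal(\B)$ is surjective (every automorphism of $\K_\B \subseteq \K$ extends to $\K$) and the conditions $f(\alpha), f(\alpha') \in \B$ depend only on the permutation $f$ induces on the roots, all of which lie in $\K_\B$. Everything else is the bookkeeping above together with the standard incidence-rank computation.
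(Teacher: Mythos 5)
Your proof is correct and takes essentially the same route as the paper: you identify $GD$ with the span of the edge-differences $v_\alpha - v_{\alpha'}$ of the graph of $\B$ and compute the rank componentwise, getting $\abs{c}-1$ per connected component by a base-point/spanning-tree argument, exactly as in the paper's choice of a basis $(v_\beta - v_\gamma)_{\gamma \in c \backslash \{\beta\}}$ for each $D_c$. Your explicit checks of the equivariance $g(v_\beta) = v_{g(\beta)}$, of the linear independence of the $v_\alpha$ (generalized eigenvectors for distinct eigenvalues), and of the compatibility between $G = \gal(\K:\Q)$ and $\gal(\B)$ only make precise what the paper leaves implicit.
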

    
    \begin{proof}
        Let $c$ be a connected component of the graph of $\B$.
        Then, the dimension of $D_c = \Span \{v_\beta - v_\gamma \mid \beta \text{---} \gamma \text{ edge of } c \}$ is $\abs{c}-1$.
        Indeed, $D_c$ contains every difference $v_\beta - v_\gamma$ for $\beta, \gamma \in c$,
        and if we choose any $\beta \in c$ then $(v_\beta - v_\gamma)_{\gamma \in c \backslash \{\beta\}}$ is a basis of $D_c$.
        Moreover $GD = \Span\{v_\beta - v_\gamma \mid \beta \text{---} \gamma \text{ edge}\} = \bigoplus_{c} D_c$.
    \end{proof}
    
    We obtain
    \begin{eqnarray*}
        \dim W &=& \abs{\A} - \dim (\K R \cap F_u) - (\abs{\A} - \abs{\B'} - \dim(F_u) - \dim(F_0)) - (\abs{\B'} - k) \\
                &=& k + \dim(F_u) + \dim(F_0) - \dim (\K R \cap F_u) \\
                &=& k + \dim(E_0 + R^\circ),
    \end{eqnarray*}
    since
    $
        E_0 + R^\circ = \left( \K R \cap (F_u \oplus \bigoplus_{\beta \in \CCC} F_\beta ) \right)^\circ
                    = \left( (\K R \cap F_u) \oplus \bigoplus_{\beta \in \CCC} F_\beta \right)^\circ.
    $
    Thus, by Lemma~\ref{lem:W:Wv}, the dimension of the $\Q$-vector space
    spanned by eigenvalues of $(X_{\tau \sigma^\omega}, S)$
    is $\dim(W v) = k$. It finishes the proof of Theorem~\ref{thm:dim:eigs}.
    

\section{Examples} \label{sec:exs}

In this section, we give examples with non-trivial coboundaries and with explicit computation of eigenvalues.

\subsection{Constructing examples with non-trivial coboundary}

We provide a recipe for constructing examples with non-trivial coboundaries:

Begin by selecting any directed graph where edges are labeled with letters, ensuring that each letter appears only once.
Next, choose any map $f : V \to V$, where $V$ is the set of vertices.
Construct a substitution $\sigma$ as follow:
For each letter $a \in \A$,
we choose $\sigma(a)$ to be labels of a path from $f(s)$ to $f(e)$,
where $s \xrightarrow{a} e$ is the edge labeled by $a$ in the graph.

For example, consider the graph
\begin{center}
    \begin{tikzpicture}
        \node at (0,0) (0) {$\cdot$};
        \node at (3cm,0) (1) {$\cdot$};
        
        \draw[->] (0) to [in=150, out=210, looseness=8] node[left]{a} (0);
        \draw[->] (0) to [bend left] node[below]{b} (1);
        \draw[->] (1) to [bend left] node[above]{c} (0);
    \end{tikzpicture}
\end{center}

We can construct the following substitutions preserving this graph:
\[
    \left\{\begin{array}{l}
    a \mapsto bc\\
    b \mapsto ab\\
    c \mapsto caa
    \end{array}, \begin{array}{l}
    a \mapsto caab\\
    b \mapsto c\\
    c \mapsto ab
    \end{array}, \begin{array}{l}
    a \mapsto bc\\
    b \mapsto aa\\
    c \mapsto bca
    \end{array}\right\}.
\]
Then, the coboundary $\begin{pmatrix} 0 & 1 & -1 \end{pmatrix}$ associated with the graph is an eigenvector of incidence matrices.
In the first case, it is associated with the eigenvalue $1$ because it preserves each vertex.
In the second case, the eigenvalue is $-1$ because it exchanges vertices.
In the third case, the eigenvalue is $0$ since both vertices are mapped to the same vertex.
Any directive sequence of substitutions constructed using this recipe for a given graph yields a subshift
that have coboundaries determined by the graph. 

\subsection{Examples of computation of eigenvalues} 

We give examples of computation of eigenvalues of morphic subshifts using the algorithm of Subsection~\ref{ss:algo}.

\begin{ex}[Thue-Morse]
    Let $\sigma: a \mapsto ab,\ b \mapsto ba$.
    Its incidence matrix $\begin{pmatrix} 1 & 1 \\ 1 & 1 \end{pmatrix}$ has only one eigenvalue of modulus $\geq 1$ which is $2$ associated with the eigenvector $v = \begin{pmatrix} 1/2 & 1/2 \end{pmatrix}^t$.
    There is no non-trivial coboundary, thus following the algorithm, $C V$ is a $1 \times 1$ invertible matrix, so
    $W = \Q^2$. Moreover, $\Delta_{M_\sigma} v = \Z\left[ \frac{1}{2} \right]$,
    thus the set of eigenvalues is $\Z \left[ \frac{1}{2} \right]$. The set $\B = \{2\}$ is connected.
\end{ex}

\begin{ex}[Regular paper folding]
    Let $\sigma: a \mapsto ab,\ b \mapsto cb,\ c \mapsto ad,\ d \mapsto cd$ and 
    $\tau : a \mapsto 11,\ b \mapsto 01,\ c \mapsto 10,\ d \mapsto 00$.
    Thanks to the algorithm of Subsection~\ref{ss:rec}, we can check that $\tau$ is recognizable in $X_\sigma$
    and we check also that $\sigma$ is not periodic.
    The set of coboundaries $R^\circ = \Span\{ \begin{pmatrix} 1 & -1 & 1 & -1 \end{pmatrix} \}$ of $X_\sigma$
    is included in the generalized eigenspace $E_0$ for the eigenvalue $0$,
    thus we take $C = \begin{pmatrix} 1 & 1 & 1 & 1 \end{pmatrix} M_\tau = \begin{pmatrix} 2 & 2 & 2 & 2 \end{pmatrix}$.
    We have $V = \begin{pmatrix} 1 & 0 \\ 1 & 1 \\ 1 & 0 \\ 1 & -1 \end{pmatrix}$ so $CV = \begin{pmatrix} 8 & 0 \end{pmatrix}$. Hence, $W$ is the left-kernel of $\begin{pmatrix} 0 & 1 & 0 & -1 \end{pmatrix}^t$.
    Since $C$ is a left-eigenvector of $M_\sigma$ for the eigenvalue $2$, the set of eigenvalues is
    $\Z\left[\frac{1}{2}\right] (W \cap \Z^4) v$ where $v = \frac{1}{4} \begin{pmatrix} 1 & 1 & 1 & 1 \end{pmatrix}^t$.
    It gives the set $\Z\left[\frac{1}{2}\right]$. $\B = \{1,2\}$ is connected.
\end{ex}

\begin{ex}[Constant-length with coboundary]
    Let $\sigma:
    a \mapsto aca\ 
    b \mapsto acb\ 
    c \mapsto cbc
    $.
    Its coboundary space is $R^\circ = \Span\{ \begin{pmatrix} 1 & 1 & -1 \end{pmatrix} \}$.
    The incidence matrix
    \[
        M = \left(\begin{array}{rrr}
        2 & 1 & 0 \\
        0 & 1 & 1 \\
        1 & 1 & 2
        \end{array}\right)
    \]
    has eigenvalues $2$ and $1$ and is not diagonalizable.
    Then, we have
    \[
        C = \begin{pmatrix} 1 & 1 & 1 \\ 1 & 1 & -1 \end{pmatrix} \quad \text{, } \quad V = \left(\begin{array}{rrr}
                                                                            1 & 0 & 1 \\
                                                                            0 & 1 & 1 \\
                                                                            -1 & -1 & 2
                                                                            \end{array}\right)
        \quad \text{ and } \quad CV = \begin{pmatrix} 0 & 0 & 4 \\ 2 & 2 & 0 \end{pmatrix}.
    \]
    We found that $W$ is the left-kernel of $\begin{pmatrix} 1 & -1 & 0 \end{pmatrix}^t$, that is the $\Q$-vector space spanned by $\{ \begin{pmatrix} 1 & 1 & 0 \end{pmatrix}, \begin{pmatrix} 0 & 0 & 1 \end{pmatrix} \}$.
    Let $v = \begin{pmatrix} 1/4 & 1/4 & 1/2 \end{pmatrix}$ be the Perron eigenvector of sum $1$.
    Since $\sigma$ has constant-length $3$, 
    the set of eigenvalues of $(X_\sigma, S)$ is $\Z\left[ \frac{1}{3} \right] (W \cap \Z^3)v = \frac{1}{2} \Z\left[ \frac{1}{3} \right]$.
\end{ex}

\begin{ex}[Weakly mixing subshift with coboundary]
    Let
    $\sigma: 
            a \mapsto d\ 
            b \mapsto ca\ 
            c \mapsto bd\ 
            d \mapsto abc
    $.
    Its matrix is
    \[
        \left(\begin{array}{rrrr}
        0 & 1 & 0 & 1 \\
        0 & 0 & 1 & 1 \\
        0 & 1 & 0 & 1 \\
        1 & 0 & 1 & 0
        \end{array}\right).
    \]
    The coboundary space is $R^\circ = \Span\{ \begin{pmatrix} 0 & 1 & -1 & 0 \end{pmatrix} \}$.
    Following the algorithm let $C = \begin{pmatrix} 1 & 1 & 1 & 1 \\ 0 & 1 & -1 & 0 \end{pmatrix}$.
    Then, the set of eigenvalues of modulus $\geq 1$ \linebreak is $\{-1, 2\}$.
    The generalized eigenspace for the eigenvalue $-1$ has dimension $2$, the matrix is not diagonalizable.
    The generalized vector $v = \begin{pmatrix} 1 & 1 & 1 & -2 \end{pmatrix}^t$ for the eigenvalue $-1$ is
    annihilated by the coboundary and is such that $\begin{pmatrix} 1 & 1 & 1 & 1 \end{pmatrix} v = 1$.
    Since $\Delta_M v = \Z$, the set of eigenvalues if $\Z$.
    We conclude that $(X_\sigma, S)$ is weakly mixing.
\end{ex}

\begin{ex}[Unimodular substitution with coboundary]
    Let $\sigma:
    a \mapsto aba\ 
    b \mapsto cb\ 
    c \mapsto cba
    $.
    Its coboundary space is $R^\circ = \Span\{ \begin{pmatrix} 1 & -1 & 0 \end{pmatrix}\}$.
    The characteristic polynomial of the incidence matrix is $(x-1)(x^2-3x+1)$.
    We have
    \[
        C = \begin{pmatrix}
                1 & 1 & 1 \\
                1 & -1 & 0
            \end{pmatrix}
        \quad , \quad
        V = \begin{pmatrix}
        1 & 1 \\
        1 & 0 \\
        1 - \varphi & -1
        \end{pmatrix}
        \quad \text{ and } \quad
        CV = \begin{pmatrix}
            1+\varphi & 0 \\ 0 & 1
        \end{pmatrix}
    \]
    The matrix $CV$ is invertible thus $W = \Q^3$.
    The matrix $M_\sigma$ is unimodular thus $\Delta_M = \Z^3$.
    The normalized Perron eigenvector is
    $v = \begin{pmatrix} 2-\varphi & 2 - \varphi & 2 \varphi - 3 \end{pmatrix}^t$
    where $\varphi$ is the golden ratio.
    Thus the set of eigenvalues is $\Z \left[ \varphi \right]$
\end{ex}

\begin{ex}[Reducible substitution with non-trivial graph of $\B$]
    Let  $\sigma: 1 \mapsto 16, 2 \mapsto 122, 3 \mapsto 12, 4 \mapsto 3, 5 \mapsto 124, 6 \mapsto 15$.
    The characteristic polynomial of the incidence matrix is $(x^3 - 3x^2 + 2x - 1) (x^3 - x - 1)$.
    The substitution has no non-trivial coboundary and $\B = \{\rho, \rho+1\}$
    where $\rho$ is the plastic ratio such that $\rho^3 = \rho + 1$.
    The graph of $\B$ has $3$ connected components of size $2$.
    The set of eigenvalues is $\Z \{1, 3\rho, 3\rho^2\}$.
\end{ex}

\begin{ex}[Irreducible substitution with non-trivial graph of $\B$]
    This example comes from~\cite{FMN96} (see also~\cite{Me23}).
    The substitution $\sigma : a \mapsto abdd, b \mapsto bc, c \mapsto d, d \mapsto a$ is irreducible,
    unimodular and has no non-trivial coboundary.
    The set $\B$ is $\{\beta, 1-\beta\}$ where $\beta$ is the greatest root of $x^4-2x^3-x^2+2x-1$.
    The Galois group $\gal(\B)$ has order $8$, and the graph of $\B$ has two connected components.
    The set of eigenvalues is $\Z[\sqrt{2}]$.
\end{ex}

\subsection{Examples coming from geometry}

In this subsection, we give examples of subshift that naturally arise from interval exchange transformations, or interval translation maps, or other geometrical problems.

\begin{ex}[$\S$-adic with coboundary coming from a problem of Novikov] \label{ex:CET4}
    The following substitutions
    \[
    \left\{
    \begin{array}{l}
    0 \mapsto 0\\
    1 \mapsto 1\\
    2 \mapsto 361\\
    3 \mapsto 3\\
    4 \mapsto 4\\
    5 \mapsto 5\\
    6 \mapsto 527\\
    7 \mapsto 7
    \end{array}, \begin{array}{l}
    0 \mapsto 527\\
    1 \mapsto 520\\
    2 \mapsto 0\\
    3 \mapsto 1\\
    4 \mapsto 361\\
    5 \mapsto 461\\
    6 \mapsto 4\\
    7 \mapsto 5
    \end{array}, \begin{array}{l}
    0 \mapsto 361\\
    1 \mapsto 3\\
    2 \mapsto 4\\
    3 \mapsto 427\\
    4 \mapsto 527\\
    5 \mapsto 7\\
    6 \mapsto 0\\
    7 \mapsto 360
    \end{array}, \begin{array}{l}
    0 \mapsto 4\\
    1 \mapsto 427\\
    2 \mapsto 527\\
    3 \mapsto 520\\
    4 \mapsto 0\\
    5 \mapsto 360\\
    6 \mapsto 361\\
    7 \mapsto 461
    \end{array}, \begin{array}{l}
    0 \mapsto 0\\
    1 \mapsto 052\\
    2 \mapsto 2\\
    3 \mapsto 3\\
    4 \mapsto 4\\
    5 \mapsto 614\\
    6 \mapsto 6\\
    7 \mapsto 7
    \end{array}, \begin{array}{l}
    0 \mapsto 0\\
    1 \mapsto 1\\
    2 \mapsto 2\\
    3 \mapsto 274\\
    4 \mapsto 4\\
    5 \mapsto 5\\
    6 \mapsto 6\\
    7 \mapsto 036
    \end{array}\right\}
    \]
    preserves the graph
    \begin{center}
        \begin{tikzpicture}
            \node at (0,0) (0) {$\cdot$};
            \node at (3cm,0) (1) {$\cdot$};
            
            \draw[->] (0) to [bend left] node[below]{$0,4$} (1);
            \draw[->] (0) to [loop left] node[left]{$1,7$} (0);
            \draw[->] (1) to [bend left] node[below]{$2,6$} (0);
            \draw[->] (1) to [loop right] node[right]{$3,5$} (1);
        \end{tikzpicture}
    \end{center}
    thus $\left(1,\,0,\,-1,\,0,\,1,\,0,\,-1,\,0\right)$ is a coboundary of the subshift for any primitive directive sequence.
    Our algorithm allows us to compute eigenvalues of the subshift for every primitive pre-periodic directive sequence.
    For example, the subshift of the second substitution has eigenvalues $\mathbb{Z}\left\{ \frac{1}{2} \beta^{2} + \frac{1}{2},\ \frac{1}{2} \beta^{2} + \frac{1}{2} \beta,\ \beta^{2} \right\}$ where $\beta$ is root of $x^{3} - 2 x^{2} - 1$.
    These substitutions come from a renormalization of a dynamical system on a non-orientable surface.
    The coboundary comes from a symmetry of the surface.
\end{ex}

\begin{ex}[Weakly mixing Interval Translation Maps]
    In~\cite{bruin2023}, it is shown that every primitive pre-periodic directive sequence with substitutions
    $1 \mapsto 2, 2 \mapsto 31^k, 3 \mapsto 31^{k-1}$, $k \geq 1$,
    gives a weakly mixing subshift. But their proof relies on awful calculations.
    Thanks to our criterion, we can give a different proof without calculation.
    There are no non-trivial coboundaries since any product of two of these substitutions is left-proper.
    They prove that the incidence matrix of the periodic part has two eigenvalues of modulus $\geq 1$
    (see Proposition~3.1 in~\cite{bruin2023}).
    Moreover, it is unimodular, so it is irreducible, and $\abs{\B} = 2 > 3/2$, so $\B$ is connected.
    By Theorem~\ref{crit:wm}, the subshift is weakly mixing.
\end{ex}

\begin{ex}[Arnoux-Yoccoz IET]
    The Arnoux-Yoccoz interval exchange transformation is defined by permutation $(2 5 4 7 6 3 1)$ and lengths
    \[
        \left(-\frac{1}{2} \beta + \frac{1}{2},\ \beta - \frac{1}{2},\ \frac{1}{2} \beta,\ 
        \frac{1}{2} \beta^{2},\ \frac{1}{2} \beta^{2},\ 
        -\frac{1}{2} \beta^{2} - \frac{1}{2} \beta + \frac{1}{2},\ 
        -\frac{1}{2} \beta^{2} - \frac{1}{2} \beta + \frac{1}{2}\right),
    \]
    where $\beta^3+\beta^2+\beta = 1$.
    See~\cite{LPV} for more details.
    
    This IET is pre-periodic for the Rauzy-Veech induction with pre-period $110110100101$ and period $101011010101001110000$.
    It corresponds respectively to substitutions
    $\tau: 1 \mapsto 15, 2 \mapsto 2, 3 \mapsto 1734335, 4 \mapsto 163434, 5 \mapsto 1635, 6 \mapsto 1634335, 7 \mapsto 1734$
    and
    $\sigma: 1 \mapsto 15172, 2 \mapsto 172, 3 \mapsto 1734365172, 4 \mapsto 15643472, 5 \mapsto 1565172, 6 \mapsto 1564365172, 7 \mapsto 173472$.
    Thank to our algorithm, we found that the set of eigenvalues of the IET is $\Z[\beta]$.
    This was expected since this IET is measurably conjugate to a translation on the torus $\TT^2$.
    There is no non-trivial coboundary and the graph of $\B = \{\beta\}$ has $3$ vertices and no edge.
    
    Note that if we consider the shift of $\sigma$, without the pre-period, then $\B$ contains $1$ so it is connected,
    thus by Theorem~\ref{crit:wm}, the shift of $\sigma$ is weakly mixing.
    
\end{ex}

\begin{ex}[Family of weakly mixing IET]
    This example comes from~4.3.1 in~\cite{DS}.
    Consider the family of interval exchange transformation be defined by permutation $(7654321)$
    and loop $10101(0^{k-1})10011100001111100000(1^{k-1})0$ for the Rauzy-Veech induction, with $k \geq 2$.
    The associated substitution
    \[
        1 \mapsto 1617,
        2 \mapsto 16252617,
        3 \mapsto 16253(4^{k-1})352617,
        4 \mapsto 16253(4^{k})352617,
    \]
    \[
        5 \mapsto 1625352617,
        6 \mapsto (1626)^{k}17,
        7 \mapsto (1626)^{k-1}17
    \]
    is unimodular, and it has no non-trivial coboundary since it is left-proper.
    We easily check that
    $
        (\begin{array}{ccccccc} 1 & -1 & 1 & -1 & 1 & -1 & 1\end{array})^t
    $
    is an eigenvector of the incidence matrix for the eigenvalue $1$.
    Hence, $\B$ contains $1$ so it is connected.    
    By Theorem~\ref{crit:wm}, the associated subshift is weakly mixing.
    
    Note that every pre-periodic directive sequence with these substitutions also give a weakly mixing subshift for the same reason.
\end{ex}

More examples of computations of eigenvalues of subshifts can be found here:
\url{https://www.i2m.univ-amu.fr/perso/paul.mercat/SubshiftsEigenvalues.pdf}
It uses Sage math and the package eigenmorphic. See end of Subsection~\ref{ss:algo} for more details.


\section{Acknowledgement}

I thank Pascal Hubert for his careful reading of this article and for interesting discussions that helped me to found Theorem~\ref{crit:wm} and Theorem~\ref{thm:dim:eigs}. I also thank Chanxi Wu for his collaboration to the recognizability result (see Subsection~\ref{ss:rec}).

\bibliographystyle{alpha}
\bibliography{references}

\end{document}